\newtheorem{theorem}{Theorem}[section]
\newtheorem{proposition}[theorem]{Proposition}
\newtheorem{corollary}[theorem]{Corollary}
\newtheorem{lemma}[theorem]{Lemma}
\theoremstyle{definition}
\newtheorem{remark}[theorem]{Remark}
\begin{document}

\title[The dealternating number and the alternation number of a closed 3-braid]
{The dealternating number and the alternation number\\ of a closed 3-braid}

\author{Tetsuya Abe}
\address{Department of Mathematics, Osaka City University, Sugimoto, Sumiyoshi-ku Osaka 558-8585, Japan} 
\email{t-abe@sci.osaka-cu.ac.jp}
\author{Kengo Kishimoto}
\address{Department of Mathematics, Osaka City University, Sugimoto, Sumiyoshi-ku Osaka 558-8585, Japan}
\email{k-kishi@sci.osaka-cu.ac.jp}
\address{Department of Mathematics, Osaka City University, Sugimoto, Sumiyoshi-ku Osaka 558-8585, Japan}
\email{jong@sci.osaka-cu.ac.jp}
\urladdr{http://www.ex.media.osaka-cu.ac.jp/~d07sa009/}
\subjclass[2000]{Primary 57M25.}

\keywords{Dealternating number; alternation number; Turaev genus; braid index; closed 3-braid}

\maketitle

\begin{abstract}
We give an upper bound for the dealternating number of a closed 3-braid.
As applications, we determine the dealternating numbers,
the alternation numbers and the Turaev genera of some closed positive 3-braids. 
We also show that there exist infinitely many positive knots 
with any dealternating number (or any alternation number) and any braid index.
\end{abstract}

\section{Introduction}
A \textit{link} is a disjoint union of circles embedded in $S^3$, and a \textit{knot} is a link with one component.
Throughout this paper, all links are oriented.
A link diagram is \textit{$n$-almost alternating} if $n$ crossing changes in the diagram turn
the diagram into an alternating link diagram (cf.~\cite{adams-almost-alternating}).
A link is \textit{$n$-almost alternating} if it has an $n$-almost alternating diagram 
and no diagram that can be turned into an alternating diagram in fewer crossings.
We say that a link is \textit{almost alternating} if it is $1$-almost alternating.
It is known that all non-alternating knots of eleven or fewer crossings except $11_{n95}$ and $11_{n118}$
are almost alternating~\cite{adams-almost-alternating}, \cite{goda-2001-almost-alternating}. 
The first author showed that there exists $n$-almost alternating knot for any positive integer $n$~\cite{abe1}.
We say that a link $L$ has \textit{dealternating number} $n$ if it is $n$-almost alternating.
We denote by $\mathrm{dalt} (L)$ the dealternating number of a link $L$.
We note that all non-alternating Montesinos links and all semi-alternating links are almost alternating (see Appendix).

Kawauchi introduced the alternation number of a link~\cite{kaw1},
which measures how far it is from alternating links.
Let $L$ and $L'$ be links.
The \textit{Gordian distance} $d_G(L,L')$ between $L$ and $L'$~\cite{murakami1}
is the minimal number of crossing changes needed to deform a diagram of $L$ into that of $L'$,
where the minimum is taken over all diagrams of $L$.
The \textit{alternation number} $\mathrm{alt} (L)$ of a link $L$
is the minimal number of $d_G(L,L')$ among all alternating links $L'$.
By definition, we have $\mathrm{alt} (L)\leq \mathrm{dalt} (L)$ for any link $L$.
We note that the alternation number differs from the dealternating number.
For example, 
the Whitehead double of the trefoil is 2-almost alternating~\cite{adams-almost-alternating}, 
however it has alternation number one. 
The first author gave a lower bound for the alternation number of a knot,
and determined the torus knots with alternation number one~\cite{abe1}.
By using this lower bound, Kanenobu determined the alternation numbers of some torus knots~\cite{kan1}.

In this paper, we give an upper bound for the dealternating number of a closed 3-braid
(Theorem~\ref{thm:dalt-upbound-3braid}).
As an application, we determine the dealternating numbers and the alternation numbers of some
closed positive $3$-braid knots as follows. \vspace{2mm}\\
{\bf Theorem 3.1.~}
	Let $\beta$ be a $3$-braid of the form
	\[
	\Delta^{2n} \prod_{i=1}^{r} \sigma_1^{p_i} \sigma_2^{q_i} 
	\] 
	such that $\widehat{\beta}$ is a knot and $p_i,q_i\geq 2$ for $i=1,2,\dots ,r$. Then we have
	\[
	\mathrm{alt}(\widehat{\beta})=\mathrm{dalt}(\widehat{\beta})=n+r-1.
	\]

The \textit{braid index} $b(L)$ of a link $L$ is the minimal number of strands
of any braid whose closure is equivalent to $L$. We show that there exist 
infinitely many positive knots with any dealternating number (or any alternation number) and any braid index.
 \vspace{2mm}\\
{\bf Theorem 4.1.~}
	For any $m\in \mathbb{Z}_{\geq 0}$ and $n\geq 3$, there exist infinitely many positive knots $K$
	with $\mathrm{alt} (K)=\mathrm{dalt} (K)=m$ and $b(K)=n$.
\vspace{2mm}

To a knot diagram $D$, Turaev associated a closed orientable surface embedded in $S^3$, 
called the \textit{Turaev surface}.
The definition of the Turaev surface is given in Section~\ref{Note on the Turaev surface} 
(see also~\cite{cromwell-book}, \cite{das2-graphs-on-surfaces}, \cite{lowrance2007kfw}).
The \textit{Turaev genus $g_{T}(K)$}  of a knot $K$ was first defined in~\cite{das2-graphs-on-surfaces}
as the minimal number of the genera of the Turaev surfaces of diagrams of $K$. 
We determine the Turaev genera and the dealternating numbers of the $(3,3n+i)$-torus knots, where $i=1,2$.\vspace{2mm}\\
{\bf Theorem 5.9.~}
	 Let $T_{3,3n+i}$ be the $(3,3n+i)$-torus knot, where $i=1,2$. Then we have 
	\[
	g_{T}(T_{3,3n+i})  = \mathrm{dalt}(T_{3,3n+i}) =n.
	\]

This paper is constructed as follows:
In Section~\ref{An upper bound for the dealternating number of a closed braid}, 
we give an upper bound for the dealternating number of a closed 3-braid.
In Section~\ref{The alternation number and the dealternating number of a closed positive 3-braid}, 
we determine the dealternating numbers and the alternation numbers of
some closed positive 3-braid knots by using a lower bound introduced by the first author~\cite{abe1}. 
In Section~\ref{A relation between the alternateness},
we show that there exist 
infinitely many positive knots with any dealternating number (or alternation number) and any braid index. 
In Section~\ref{Note on the Turaev surface}, we determine the dealternating numbers and the Turaev genera of 
the $(3,3n+i)$-torus knots, where $i=1,2$. 
In Appendix, we observe that non-alternating Montesinos links and semi-alternating links are almost alternating.  	
This Appendix is authored by the first and the second authors and In Dae Jong. 
\section{An upper bound for the dealternating number of a closed $3$-braid}
\label{An upper bound for the dealternating number of a closed braid}

In this section, we give an upper bound for the dealternating number of a closed $3$-braid.
The \textit{$n$-braid group} $B_n$, $n \in \mathbb{Z}_{>0}$, is a group which has the following 
presentation. 
\begin{equation*}
\left< \sigma_1,  \sigma_2, \dots ,  \sigma_{n-1}  \left| 
\begin{array}{cc}
\sigma_t \sigma_s = \sigma_s \sigma_t & (|t-s| >1)\\ 
\sigma_t \sigma_s \sigma_t = \sigma_s \sigma_t \sigma_s & (|t-s| =1)
\end{array}
\right\rangle \right. .
\end{equation*}

Let $\beta$ be an $n$-braid of the form
\[
\prod_{j=1}^{m} \sigma_{i_j}^{a_j},
\]
where $i_j\ne i_{j+1}$ and $a_j$ is nonzero integer for $j=1,2,\dots ,m$.
We call $\sigma_{i_j}^{a_j}$ the \textit{syllables} of $\beta$, 
which represent an $a_j$-half twists on two strands in $\beta$. 
We denote by $\widehat{\beta}$ the closure of a braid $\beta$.
Set
\begin{eqnarray*}
	E_+(\beta) &=& \{ ~j~|~ \text{$i_j$ is even, and $a_j$ is positive} \} ,\\
	E_-(\beta) &=& \{ ~j~|~ \text{$i_j$ is even, and $a_j$ is negative} \} ,\\
	O_+(\beta) &=& \{ ~j~|~ \text{$i_j$ is odd, and $a_j$ is positive} \} ,\\
	O_-(\beta) &=& \{ ~j~|~ \text{$i_j$ is odd, and $a_j$ is negative} \} .
\end{eqnarray*}
We denote by $|X|$ the number of elements of a set $X$.

\begin{lemma} \label{lem:daltupbound}
	Let $\beta$ be an $n$-braid. Then we have
	\[
	\mathrm{dalt}(\widehat{\beta})\leq \min\{|E_+(\beta )|+|O_-(\beta )|, |E_-(\beta )|+|O_+(\beta )|\}.
	\]
\end{lemma}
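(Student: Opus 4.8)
The plan is to work directly with the standard diagram of the closure $\widehat{\beta}$, in which each syllable $\sigma_{i_j}^{a_j}$ appears as a twist region of $|a_j|$ crossings, and to first pin down exactly when such a diagram is alternating. I would checkerboard-colour the diagram so that the bands separating consecutive braid strands alternate in colour, and then trace the over/under pattern along each strand through the junctions between successive twist regions. Two facts fall out: crossings lying in the \emph{same} column $\sigma_i$ are forced to have equal sign (which is automatic inside one syllable), while crossings in \emph{adjacent} columns $\sigma_i,\sigma_{i+1}$ must have \emph{opposite} signs where they share a strand. Since the column-adjacency graph is a path, this means the diagram is alternating exactly when the sign of each syllable is dictated by the parity of its column index, i.e.\ one of the two ``parity patterns'' (odd columns positive / even columns negative, or its mirror) holds throughout.

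Next I would translate this criterion into the two quantities in the statement. Call a syllable \emph{defective} for a chosen parity pattern if its sign disagrees with that pattern. Under the pattern ``odd $+$, even $-$'' the defective syllables are indexed precisely by $E_+(\beta)\cup O_-(\beta)$, and under the mirror pattern by $E_-(\beta)\cup O_+(\beta)$; as these unions are disjoint, their cardinalities are $|E_+(\beta)|+|O_-(\beta)|$ and $|E_-(\beta)|+|O_+(\beta)|$. It therefore suffices to show that, using the \emph{better} of the two patterns, one can reach an alternating diagram by performing a single crossing change for each defective syllable, and then to take the minimum over the two patterns.

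The heart of the argument is then an inductive local claim: a single crossing change inside a defective twist region, \emph{together with Reidemeister moves} (including the braid relations), reduces the number of defective syllables by one without creating new ones. I would set this up as induction on the number of defective syllables: in the base case there are none and the diagram is already alternating by the criterion above, while in the inductive step one changes one crossing at the boundary of a defective region $\sigma_i^{a_j}$, producing a cancelling pair, and then simplifies so that the surplus crossings of that region are absorbed into, or slid past, the neighbouring columns. Carrying this out repairs the junctions flanking the former defect, lowers the defect count by one, and costs exactly one crossing change; iterating yields a parity-consistent, hence alternating, diagram at total cost $\min\{|E_+(\beta)|+|O_-(\beta)|,\,|E_-(\beta)|+|O_+(\beta)|\}$.

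The step I expect to be the genuine obstacle is exactly this local claim. The naive move—flipping the sign of a whole defective syllable—costs $|a_j|$ crossing changes, and the non-trivial point is that \emph{one} change suffices however long the twist region is. The difficulty is that switching a single crossing of a constant-sign twist region destroys the over/under alternation \emph{within} that column, so the repair cannot simply leave the remaining crossings in place; the surplus twists must be removed or transferred by isotopy, and one must check that the bookkeeping at the two junctions bounding each defective region (and at the closure arc joining the last syllable to the first) stays consistent for the chosen parity pattern. Making this local surgery precise—and verifying that the simplifications triggered at distinct defective syllables do not interfere—is where the real work lies; once it is established, the global bound and the passage to the minimum over the two patterns are immediate.
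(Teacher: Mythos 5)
Your overall strategy coincides with the paper's: observe that the standard closed braid diagram is alternating when the sign of every syllable agrees with the parity of its column, note that the syllables violating the two possible parity patterns are indexed exactly by $E_+(\beta)\cup O_-(\beta)$ and by $E_-(\beta)\cup O_+(\beta)$, and then repair each violating syllable at the cost of a single crossing change after a preliminary isotopy. However, the step you yourself flag as ``the genuine obstacle'' is not an incidental detail to be deferred --- it is the entire content of the lemma, and your proposal neither supplies it nor sketches a mechanism that could work. Changing one crossing ``at the boundary of a defective region $\sigma_i^{a_j}$, producing a cancelling pair'' replaces the twist region by a diagram of $\sigma_i^{a_j-2}$: after the Reidemeister II cancellation the region has two fewer crossings but is still of the wrong sign, so iterating this costs on the order of $|a_j|/2$ crossing changes rather than one. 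Likewise the surplus crossings cannot be ``slid past'' or ``absorbed into'' the neighbouring columns, since $\sigma_i$ and $\sigma_{i\pm1}$ do not commute; and, as you correctly observe, a column containing crossings of both signs is never internally alternating, so no single crossing change applied to the unmodified twist region can succeed.

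What is missing is the explicit local deformation that the paper exhibits in Figure~1: one must first isotope the $2$-tangle $T_j=\sigma_{i_j}^{a_j}$, by Reidemeister moves supported in a disk containing only that syllable (so that the other syllables and the closure arcs are undisturbed), into a different diagram of the \emph{same} tangle containing one distinguished crossing $c$ with the property that changing $c$ alone yields a tangle diagram that is internally alternating and presents the required over/under pattern at all four of its endpoints. Exhibiting that isotopy, and checking the interface condition at both ends of the twist region, is where the proof actually lives; without it the bound ``one crossing change per defective syllable'' is asserted rather than proved. The surrounding bookkeeping in your write-up --- the alternation criterion for closed braid diagrams, the identification of the defective syllables with $E_+\cup O_-$ and $E_-\cup O_+$, the disjointness of those sets, and the passage to the minimum over the two parity patterns --- is correct and matches the paper, but it reduces the lemma to the one claim you leave unestablished.
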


\begin{proof}
Let $D$ be the closed braid diagram of $\widehat{\beta}$.
We may assume that $|E_+(\beta )|+|O_-(\beta )|\leq |E_-(\beta )|+|O_+(\beta )|$ since
we can prove in a similar way as in the case $|E_+(\beta )|+|O_-(\beta )|\geq |E_-(\beta )|+|O_+(\beta )|$.
Let $T_j$ be a 2-tangle diagram in $D$, which is represented by a syllable $\sigma_{i_j}^{a_j}$ for $j=1,2,\dots ,m$.
For all $j \in E_+(\beta)\cup O_-(\beta)$, we deform $T_j$ by the Reidemeister moves,
and we apply a crossing change at the crossing $c$ as in Figure~\ref{fig:Figure1}.
\begin{figure}[htbp]
	\begin{center}
		\includegraphics[scale=0.6]{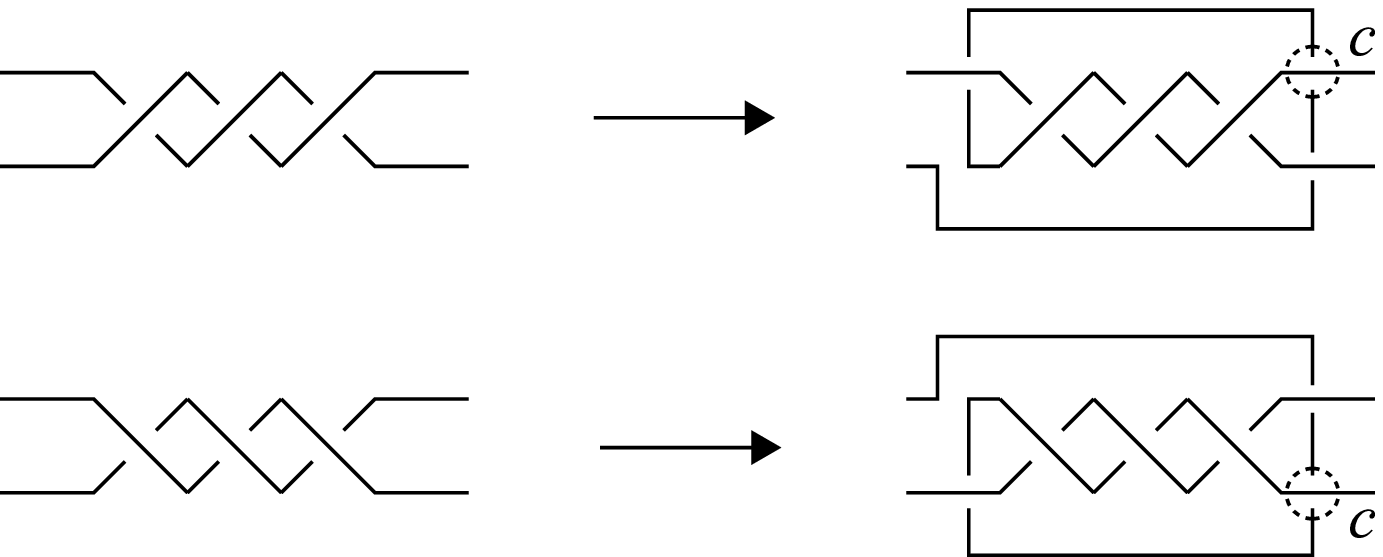}
	\end{center}
	\caption{}
	\label{fig:Figure1}
\end{figure}

By this deformation, we obtain an alternating diagram. Thus we have 
\[
\mathrm{dalt}(\widehat{\beta }) \leq |E_+(\beta )|+|O_-(\beta )|.
\]
\end{proof}

An $n$-braid $\beta$ is \textit{positive} if $a_j$ is positive for all $j$.
We note that a positive $3$-braid is conjugate to a braid of the form
\[
\prod_{i=1}^{r} \sigma_1^{p_i} \sigma_2^{q_i},
\] 
where $p_i,~q_i\in \mathbb{Z}_{>0}$ for $i=1,2,\dots ,r$. 
\begin{lemma} \label{lem:daltupboundpositive3}
	Let $\beta$ be a positive $3$-braid of the form
	\[
	\prod_{i=1}^{r} \sigma_1^{p_i} \sigma_2^{q_i},
	\] 
	where $p_i,~q_i\in \mathbb{Z}_{>0}$ for $i=1,2,\dots ,r$. Then we have
	\[
	\mathrm{dalt}(\widehat{\beta})\leq r-1.
	\]
\end{lemma}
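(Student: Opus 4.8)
The plan is to upgrade the generic bound of Lemma~\ref{lem:daltupbound}, which for this braid yields only $r$, to $r-1$ by exploiting the cyclic structure of the closed braid. For $\beta=\prod_{i=1}^{r}\sigma_1^{p_i}\sigma_2^{q_i}$ with all exponents positive, the relevant index sets are $E_-(\beta)=O_-(\beta)=\varnothing$ while $|E_+(\beta)|=|O_+(\beta)|=r$, so Lemma~\ref{lem:daltupbound} gives $\mathrm{dalt}(\widehat{\beta})\le r$. The whole task is therefore to save exactly one crossing change, and the extra resource is that $D$ is a \emph{closed} braid diagram: the last syllable $\sigma_2^{q_r}$ is joined back to the first syllable $\sigma_1^{p_1}$ by the closure arcs, which may be isotoped freely. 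Since only an upper bound is needed, it suffices to exhibit $r-1$ crossing changes after which the diagram becomes alternating.

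First I would set up the diagram exactly as in the proof of Lemma~\ref{lem:daltupbound}: each syllable $\sigma_1^{p_i}$ and $\sigma_2^{q_i}$ is an internally alternating twist region, so any failure of alternation is concentrated at the junctions between consecutive syllables. Following that construction, I would deform each of the twist regions $\sigma_2^{q_1},\dots,\sigma_2^{q_{r-1}}$ by Reidemeister moves and perform a single crossing change in each, a total of $r-1$ changes, while \emph{leaving the final region $\sigma_2^{q_r}$ untouched}. I would then verify that the resulting diagram is alternating. The base case $r=1$ is precisely the statement that $\widehat{\sigma_1^{p_1}\sigma_2^{q_1}}$ is alternating, which holds because it is a closure of a rational (two-bridge) tangle; this is exactly the phenomenon that leaves one region free and anchors the count at $r-1$ rather than $r$.

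The main obstacle is the verification in this last step: one must check that, after the $r-1$ crossing changes, every junction becomes alternating, including the junction running through the closure between $\sigma_2^{q_r}$ and $\sigma_1^{p_1}$. The subtlety is that whether a given junction is a priori a defect depends on the parities of the $p_i$ and $q_i$, whereas the final bound $r-1$ is parity-independent; the resolution is that the preliminary Reidemeister deformations built into the construction of Lemma~\ref{lem:daltupbound} absorb these parity discrepancies, so that one crossing change per interior $\sigma_2$-region suffices uniformly, and the lone wrap-around junction is resolved for free by isotoping the closure arcs. The same idea can alternatively be packaged as an induction on $r$, peeling off one summand of the cyclic tangle sum at the cost of a single crossing change and invoking the hypothesis for $r-1$; in either formulation the crux is controlling precisely the one junction that the closure renders redundant.
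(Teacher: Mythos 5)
Your overall strategy coincides with the paper's: both arguments take the closed braid diagram, apply Reidemeister moves, and then perform $r-1$ specified crossing changes, locating the saving of one change (relative to the bound $r$ coming from Lemma~\ref{lem:daltupbound}) in the closure. However, there is a genuine gap at exactly the step you yourself flag as ``the main obstacle.'' In the cyclic word $\sigma_1^{p_1}\sigma_2^{q_1}\cdots\sigma_1^{p_r}\sigma_2^{q_r}$ \emph{every} junction between a positive $\sigma_1$-syllable and a positive $\sigma_2$-syllable violates alternation (the middle strand passes over twice in succession at a $\sigma_1\!\to\!\sigma_2$ junction and under twice at a $\sigma_2\!\to\!\sigma_1$ junction), so there are $2r$ defective junctions, and the Figure~\ref{fig:Figure1}-type modification of a $\sigma_2$-block repairs \emph{both} junctions incident to that block with its one crossing change. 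Leaving $\sigma_2^{q_r}$ untouched therefore leaves two defective junctions --- the interior one with $\sigma_1^{p_r}$ and the wrap-around one with $\sigma_1^{p_1}$ --- not the single ``lone wrap-around junction'' of your accounting, and the assertion that these are ``resolved for free by isotoping the closure arcs'' is precisely the content of the lemma; it is asserted but nowhere established.

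The justification you offer, namely the base case $r=1$, does not transfer to $r\ge 2$. For $r=1$ the closure $\widehat{\sigma_1^{p}\sigma_2^{q}}$ is the connected sum $T(2,p)\# T(2,q)$ (not the closure of a rational tangle, incidentally: two-bridge links are prime), and the isotopy realigning the $\sigma_2^{q}$ block uses the fact that this block together with the third strand and its closure arc forms an entire connected summand, which may be turned over freely. For $r\ge 2$ the third strand threads through all $r$ of the $\sigma_2$-blocks, so $\sigma_2^{q_r}$ is not a summand and turning it over is a mutation rather than an isotopy; some genuinely different local move is required, and exhibiting it is exactly the role played by the paper's Figure~\ref{fig:Figure2}. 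The alternative packaging as an induction ``peeling off one summand at the cost of a crossing change'' has the same problem plus a formal one: a crossing change alters the link, whereas the dealternating number requires all $r-1$ changes to be performed in a single diagram of $\widehat{\beta}$, so the induction would have to be restated as a claim about tangle diagrams with controlled boundary types, which you do not set up. To close the gap you must exhibit the isotoped diagram (or the choice of the $r-1$ crossings) explicitly and verify alternation at all $2r$ junctions, including both junctions of the unmodified block.
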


\begin{proof}
Let $D$ be the closed braid diagram of $\widehat{\beta}$.
We deform $D$ into an alternating diagram by the Reidemeister moves and
a crossing change at each crossing $c_i~(i=1,2,\dots ,r-1)$ as in Figure~\ref{fig:Figure2}.
\begin{figure}[htbp]
  \begin{center}
    \includegraphics[scale=0.8]{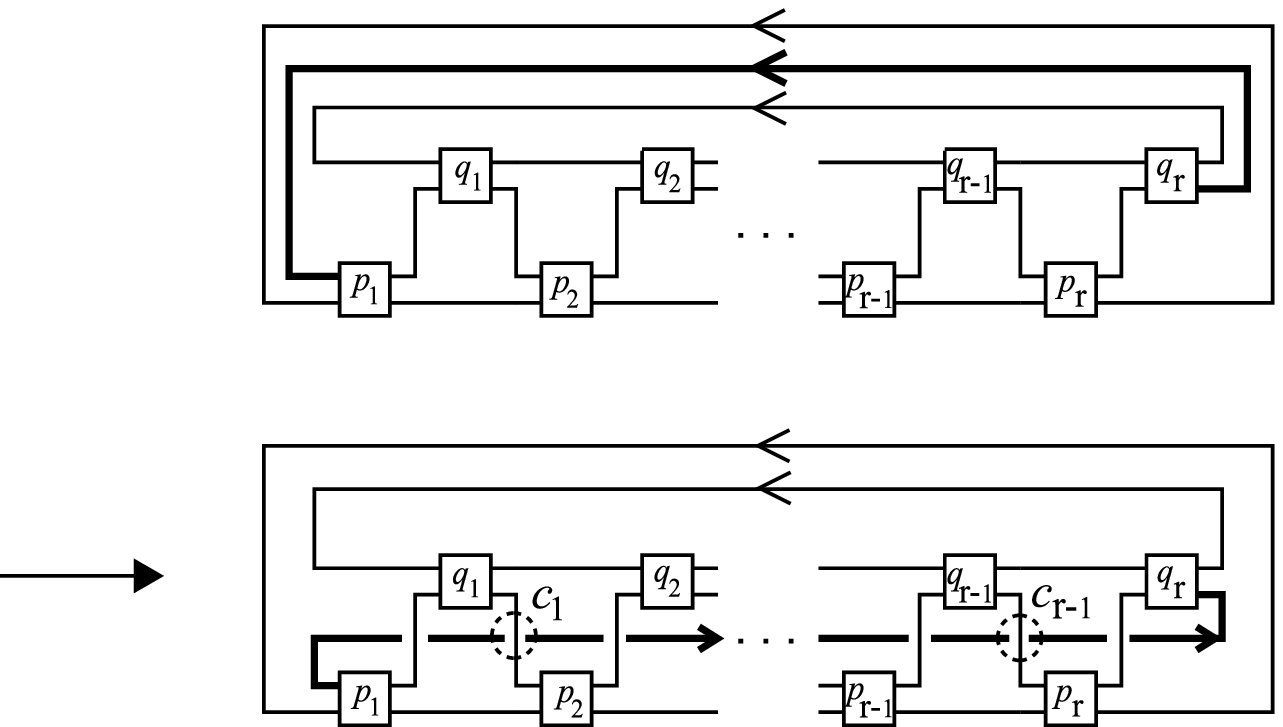}
  \end{center}
  \caption{}
  \label{fig:Figure2}
\end{figure}

\end{proof}

Murasugi showed the following.
\begin{lemma}[\cite{mur4}] \label{lem:3-braidclass} 
	Every $3$-braid is conjugate to an element of the following sets.
	\begin{eqnarray*}
	\Omega _0&=&\{\Delta^{2n}~|~n\in \mathbb{Z}\}, \\
	\Omega _1&=&\{\Delta^{2n}(\sigma_1 \sigma_2)~|~n\in \mathbb{Z}\},\\
	\Omega _2&=&\{\Delta^{2n}(\sigma_1 \sigma_2)^2~|~n\in \mathbb{Z}\},\\
	\Omega _3&=&\{\Delta^{2n+1}~|~n\in \mathbb{Z}\},\\
	\Omega _4&=&\{\Delta^{2n}\sigma_1^{-p}~|~n\in \mathbb{Z}, p\in \mathbb{Z}_{>0}\},\\
	\Omega _5&=&\{\Delta^{2n}\sigma_2^q~|~n\in \mathbb{Z}, q\in \mathbb{Z}_{>0}\},\\
	\Omega _6&=&\{\Delta^{2n}\prod_{i=1}^{r}\sigma_1^{-p_i} \sigma_2^{q_i}
                ~|~n\in \mathbb{Z}, r,p_i,q_i\in \mathbb{Z}_{>0}~(i=1,2,\dots ,r)\},
	\end{eqnarray*}
where $\Delta=\sigma_1 \sigma_2 \sigma_1$.
\end{lemma}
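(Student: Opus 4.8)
The plan is to pass to the quotient of $B_3$ by its center and recognize the resulting group as a free product, where conjugacy classes are completely understood. Recall that $\Delta^2=(\sigma_1\sigma_2)^3$ generates the center $Z(B_3)$, that $B_3$ has the presentation $\langle x,y\mid x^2=y^3\rangle$ with $x=\Delta$ and $y=\sigma_1\sigma_2$, and hence that $B_3/Z(B_3)\cong \mathrm{PSL}(2,\mathbb{Z})\cong \mathbb{Z}/2\mathbb{Z} * \mathbb{Z}/3\mathbb{Z}$, with free factors generated by $a:=\bar\Delta$ (of order $2$) and $b:=\overline{\sigma_1\sigma_2}$ (of order $3$). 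Because the center is normal and central, conjugation in $B_3$ commutes with the projection $\pi\colon B_3\to B_3/Z(B_3)$; thus $\pi$ carries conjugacy classes to conjugacy classes, and the $\pi$-preimage of a single class downstairs is a union of $B_3$-conjugacy classes differing only by a central factor $\Delta^{2n}$, since $\ker\pi=\langle\Delta^2\rangle$. So it suffices to classify conjugacy classes in $\mathbb{Z}/2\mathbb{Z} * \mathbb{Z}/3\mathbb{Z}$ and then lift.

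First I would dispose of the torsion classes. In a free product the elements of finite order are exactly the conjugates of elements of the free factors, so up to conjugacy the finite-order classes are represented by $1$, $a$, $b$, and $b^2$ (with $b$ and $b^2$ distinct and non-conjugate). Lifting through $\pi$, these four classes produce exactly $\Omega_0$ (from $1$), $\Omega_3$ (from $a=\bar\Delta$, whose preimage is $\{\Delta^{2n+1}\}$), $\Omega_1$ (from $b=\overline{\sigma_1\sigma_2}$), and $\Omega_2$ (from $b^2$). The even/odd split of the $\Delta$-power distinguishing $\Omega_0$ from $\Omega_3$ is precisely the distinction between the trivial class and the class of $a$ downstairs.

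For the remaining (infinite-order) classes I would invoke the normal form theorem for free products: every such class has a cyclically reduced representative, unique up to cyclic permutation, and in $\mathbb{Z}/2\mathbb{Z} * \mathbb{Z}/3\mathbb{Z}$ such a word is an alternating product $a b^{e_1} a b^{e_2}\cdots a b^{e_s}$ with $e_i\in\{1,2\}$. The translation back to the standard generators is the computational heart of the argument: from $\sigma_1=(\sigma_1\sigma_2)^{-1}\Delta$ and $\sigma_2=\Delta^{-1}(\sigma_1\sigma_2)^2$ one computes $\bar\sigma_1^{-1}=ab$ and $\bar\sigma_2=ab^2$, so that a maximal run of $e_i=1$ assembles into $(ab)^{p}=\bar\sigma_1^{-p}$ and a maximal run of $e_i=2$ into $(ab^2)^{q}=\bar\sigma_2^{q}$. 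Reading the cyclic word by its runs therefore exhibits it as $\prod_{i=1}^r (ab)^{p_i}(ab^2)^{q_i}=\overline{\prod_{i=1}^r \sigma_1^{-p_i}\sigma_2^{q_i}}$ when both letters occur, and as $(ab)^p=\bar\sigma_1^{-p}$ or $(ab^2)^q=\bar\sigma_2^{q}$ in the degenerate cases where only one letter occurs. Lifting along $\pi$ and absorbing the kernel ambiguity into $\Delta^{2n}$ (forced to be an even power since $\ker\pi=\langle\Delta^2\rangle$) lands these in $\Omega_6$, $\Omega_4$, and $\Omega_5$ respectively, and cyclic permutation of the runs is exactly the conjugation freedom in the stated $\Omega_6$ form.

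The step I expect to be the main obstacle is the bookkeeping in this last translation: one must check that the concatenation $\prod (ab)^{p_i}(ab^2)^{q_i}$ is genuinely reduced, so that distinct run-patterns correspond to correctly counted syllables, match cyclic permutations downstairs with conjugations upstairs, and confirm that the central ambiguity always contributes an even power of $\Delta$, so that the infinite-order lifts never escape the families $\Omega_4$, $\Omega_5$, $\Omega_6$. The torsion enumeration rests on the free-product structure theorem that finite-order elements are conjugate into the factors, which I would cite rather than reprove.
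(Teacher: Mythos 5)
The paper offers no proof of this lemma: it is quoted verbatim from Murasugi's memoir \cite{mur4}, so there is no internal argument to compare against. Your proof is correct, and it is essentially the standard modern derivation of Murasugi's normal forms. The reduction to $B_3/Z(B_3)\cong\mathbb{Z}/2\mathbb{Z}*\mathbb{Z}/3\mathbb{Z}$ is sound: $Z(B_3)=\langle\Delta^2\rangle$ in the presentation $\langle x,y\mid x^2=y^3\rangle$ with $x=\Delta$, $y=\sigma_1\sigma_2$, conjugacy classes push forward and pull back up to a central factor $\Delta^{2n}$ (an even power, since $\ker\pi=\langle\Delta^2\rangle$), the torsion classes $1,a,b,b^2$ lift precisely to $\Omega_0,\Omega_3,\Omega_1,\Omega_2$, and your dictionary $\bar\sigma_1^{-1}=ab$, $\bar\sigma_2=ab^2$ (from $\sigma_1=y^{-1}x$, $\sigma_2=x^{-1}y^2$) correctly converts a cyclically reduced word $ab^{e_1}\cdots ab^{e_s}$, read by maximal runs of $e_i$, into $\sigma_1^{-p}$, $\sigma_2^{q}$, or $\prod_i\sigma_1^{-p_i}\sigma_2^{q_i}$, landing in $\Omega_4$, $\Omega_5$, $\Omega_6$. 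The bookkeeping you flag as the main obstacle does go through: in $\prod_i(ab)^{p_i}(ab^2)^{q_i}$ consecutive syllables always lie in alternating free factors and none is trivial, so the word is cyclically reduced, and cyclic permutation of syllables is realized by conjugation upstairs. The two facts you cite without proof (the conjugacy theorem for free products and the conjugacy of finite-order elements into the factors) are standard and appropriate to invoke. Note also that the lemma only asserts that every $3$-braid is conjugate \emph{into} one of the $\Omega_i$, so you need only the existence of a cyclically reduced conjugate, not the uniqueness half of the normal form theorem; the non-conjugacy assertions you mention are harmless extras.
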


We denote by $\widehat{\Omega}_j$
$(j=0,\dots, 6)$ the set of the closures of braids in $\Omega _j$ $(j=0,\dots, 6)$.
\begin{remark}
The closure of the $3$-braid $\Delta^{2n}(\sigma_1 \sigma_2)^i \in \Omega _i$ is the $(3, 3n+i)$-torus link,
where $i=0, 1, 2$. The sets $\Omega _j$ $(j=0,\dots, 6)$ are mutually disjoint.
However, the sets $\widehat{\Omega}_j$ $(j=0,\dots, 6)$ are not mutually disjoint 
(see~\cite{birman1993-classification}, \cite{mur4}).
A closed $3$-braid knot is either the $(3, 3n+i)$-torus knot or a knot in $\widehat{\Omega }_6$.
\end{remark}

\begin{theorem}\label{thm:dalt-upbound-3braid}
	Let $\beta$ be a $3$-braid in $\Omega _i~(i=0,\dots ,6)$. With the notation of Lemma~\ref{lem:3-braidclass}, 
	we have
	\[
	\mathrm{dalt}(\widehat{\beta})\leq |n|.
	\]
	In particular, we have
	\begin{align*}
		(1) &~ \mathrm{dalt}(\widehat{\beta })\leq |n|-1& &\text{if } \beta \in \Omega _i ~(i=1,2),~\text{and}~n<0,\\
		(2) &~ \mathrm{dalt}(\widehat{\beta })\leq n-1  & &\text{if } \beta \in \Omega _4, ~n\geq 0,~\text{and} ~p=2n-1,2n,2n+1,\\
		(3) &~ \mathrm{dalt}(\widehat{\beta })\leq |n|-1& &\text{if } \beta \in \Omega _5, ~n\leq 0,~\text{and} ~p=2n-1,2n,2n+1.
	\end{align*}
\end{theorem}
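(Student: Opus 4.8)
The plan is to combine Murasugi's classification with the two bounds already in hand. Since the dealternating number of a closed braid depends only on the link, it is invariant under conjugation of the braid; hence by Lemma~\ref{lem:3-braidclass} it suffices to prove the estimate for a single representative $\beta\in\Omega_i$, and I would argue case by case on $i$. The one algebraic fact driving every case is that the full twist $\Delta^2$ is central and can be rewritten via the braid relations as $\Delta^2=\sigma_1^2\sigma_2\sigma_1^2\sigma_2=\sigma_2^2\sigma_1\sigma_2^2\sigma_1$, so that $\Delta^{2n}=(\sigma_1^2\sigma_2)^{2n}$. Thus each factor $\Delta^2$ can be traded into positive $\sigma_1$-syllables that are later either cancelled against negative syllables or grouped with the remaining factors.

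First I would treat the cases carrying genuine sign changes, namely $\Omega_4,\Omega_5,\Omega_6$, by reducing to Lemma~\ref{lem:daltupbound}. The idea is to push the positive $\sigma_1$-syllables coming from $\Delta^{2n}$ against the negative syllables $\sigma_1^{-p_i}$ and then to conjugate cyclically so that the resulting word has no negative $\sigma_2$-syllable and exactly $|n|$ positive $\sigma_1$-syllables; then $|E_-(\beta)|+|O_+(\beta)|=|n|$ and Lemma~\ref{lem:daltupbound} finishes the bound. For example, the simplest $\Omega_6$ subcase gives $\Delta^2\sigma_1^{-p}\sigma_2^q\sim\sigma_2^{q+2}\sigma_1\sigma_2^2\sigma_1^{1-p}$, which has one positive $\sigma_1$-syllable and no negative $\sigma_2$-syllable, so $\mathrm{dalt}(\widehat{\beta})\le 1=|n|$. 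The general case requires organizing the $n$ copies of $\Delta^2$ and the $r$ factors $\sigma_1^{-p_i}\sigma_2^{q_i}$ into a normal form in which precisely $|n|$ positive $\sigma_1$-syllables survive (and watching the sign of $n$, which flips the roles of the two quantities in Lemma~\ref{lem:daltupbound}); this bookkeeping is the routine but lengthy part.

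The essential difficulty lies in the all-positive cases $\Omega_0,\Omega_1,\Omega_2,\Omega_3$, whose closures include the torus links $T_{3,3n+i}$. Here no signs cancel, and for a positive braid both Lemma~\ref{lem:daltupbound} and Lemma~\ref{lem:daltupboundpositive3} return only a bound of order $2n$ (for instance $\Delta^{2n}=(\sigma_1^2\sigma_2)^{2n}$ has $r=2n$, giving $\mathrm{dalt}\le 2n-1$), and inserting canceling negative crossings by a Reidemeister~II move never lowers the signed syllable count. I therefore expect to prove $\mathrm{dalt}(\widehat{\beta})\le n$ by an explicit diagrammatic construction on the standard closed-braid diagram: change a single, carefully chosen crossing inside each of the $n$ full twists and then simplify by Reidemeister moves to an alternating diagram. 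The subtlety — and the main obstacle — is that the $n$ crossing changes must be chosen so as to interact globally: reducing each full twist independently (say $\Delta^2\rightsquigarrow\sigma_1^2\sigma_2^2$) only lands in a shorter positive braid that still needs further changes, which wastes the budget. I would display this construction with a figure, in the spirit of the proof of Lemma~\ref{lem:daltupboundpositive3}.

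Finally, the three improved inequalities are local refinements of the above. In each, the extra factor supplies one alternating tangle for free: for $\Omega_4$ with $p=2n-1,2n,2n+1$ the $\sigma_1$-exponent of $\Delta^{2n}\sigma_1^{-p}$ is close to $2n$, so after cancellation one full twist collapses to an alternating clasp (e.g.\ $\Delta^2\sigma_1^{-2}\sim\sigma_1^2\sigma_2^2$ is already alternating), saving one crossing change and yielding $n-1$; the case $\Omega_5$ is the mirror statement under $\sigma_1\leftrightarrow\sigma_2$; and for $\Omega_1,\Omega_2$ with $n<0$ the factor $(\sigma_1\sigma_2)^i$ absorbs one negative twist in the same way. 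I would verify each refinement by exhibiting the single saved crossing change explicitly.
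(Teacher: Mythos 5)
Your handling of $\Omega_4$, $\Omega_5$, $\Omega_6$ is in the right spirit (the paper likewise cancels the $\sigma_1$-powers of $\Delta^{2n}$ against the $\sigma_1^{-p_i}$ and arranges the word so that $|E_-(\beta)|+|O_+(\beta)|=|n|$ before invoking Lemma~\ref{lem:daltupbound}), but your treatment of the all-positive classes $\Omega_0,\dots,\Omega_3$ contains a genuine gap, and it sits exactly where you locate ``the essential difficulty.'' You assert that for a positive representative Lemma~\ref{lem:daltupboundpositive3} can only return a bound of order $2n$, because $\Delta^{2n}=(\sigma_1^2\sigma_2)^{2n}$ has $2n$ syllable pairs, and you therefore defer to an unspecified diagrammatic construction (``change a single, carefully chosen crossing inside each of the $n$ full twists\dots I would display this construction with a figure''). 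That premise is false: since $\sigma_1$ commutes with $\sigma_2\sigma_1^2\sigma_2$, one has
\[
\Delta^{2n}=(\sigma_1^2\,\sigma_2\sigma_1^2\sigma_2)^n=\sigma_1^{2n}(\sigma_2\sigma_1^2\sigma_2)^n=\sigma_1^{2n}\sigma_2(\sigma_1^2\sigma_2^2)^{n-1}\sigma_1^2\sigma_2,
\]
a positive word of the form $\prod_{i=1}^{r}\sigma_1^{p_i}\sigma_2^{q_i}$ with $r=n+1$ rather than $2n$, so Lemma~\ref{lem:daltupboundpositive3} immediately gives $\mathrm{dalt}(\widehat{\beta})\le n$ for $\Omega_0$, and the same collapse (with the leftover factors $(\sigma_1\sigma_2)^i$ or $\sigma_1$ absorbed into adjacent syllables) handles $\Omega_1,\Omega_2,\Omega_3$. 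No new crossing-change scheme with ``global interaction'' between the full twists is needed, and since you neither supply that scheme nor verify that it terminates in an alternating diagram, this part of your argument is not a proof as written.

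The same omission propagates to the refined inequalities: your examples ($\Delta^2\sigma_1^{-2}\sim\sigma_1^2\sigma_2^2$, etc.) are correct, but the general statements (2) and (3) for arbitrary $n$ with $p=2n\pm1$ again rest on collapsing $\Delta^{2n}$ to the $(n+1)$-syllable normal form above and then cancelling or merging one more pair (for $p=2n$ one gets $(\sigma_1^2\sigma_2^2)^n$ with $r=n$; for $p=2n\pm1$ a short further rewriting, or in the case $p=2n+1$ an explicit Reidemeister-move argument, is required). ``Exhibiting the single saved crossing change explicitly'' is the whole content of these cases, and it is missing. In short: replace the proposed figure-based construction by the algebraic normal form $\Delta^{2n}=\sigma_1^{2n}\sigma_2(\sigma_1^2\sigma_2^2)^{n-1}\sigma_1^2\sigma_2$ and the proof closes along the lines you intended.
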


\begin{proof}
{\bf Case 0:} $\beta \in \Omega _0$.
	We may assume that $n\geq 0$, since the mirror image of $\Delta^{2n}$ is $\Delta^{-2n}$.
	We can modify $\beta $ into $\sigma_1^{2n} \sigma_2 (\sigma_1^2 \sigma_2^2 )^{n-1}\sigma_1^2 \sigma_2$ 
	by braid relations and conjugations as follows.
	\begin{align*}
		\beta =\Delta^{2n} 
			 &=(\sigma_1 \sigma_2 \sigma_1 )^{2n}\\
			 &=(\sigma_1 \sigma_2 \sigma_1 \sigma_1 \sigma_2 \sigma_1 )^{n}\\				   
			 &=(\sigma_1^2 \sigma_2 \sigma_1^2 \sigma_2 )^{n}\\						   
			 &=\sigma_1^{2n} (\sigma_2 \sigma_1^2 \sigma_2 )^{n}\\
			 &=\sigma_1^{2n} \sigma_2 (\sigma_1^2 \sigma_2^2 )^{n-1}\sigma_1^2 \sigma_2 .
	\end{align*}
	We note that $\sigma_1 (\sigma_2 \sigma_1^2 \sigma_2)=(\sigma_2 \sigma_1^2 \sigma_2) \sigma_1$.
	By Lemma~\ref{lem:daltupboundpositive3}, we have 
	\[
	\mathrm{dalt}(\widehat{\beta})\leq n.
	\]
The rest of the proof is similar to the proof of Case 0. \\
{\bf Case 1.1:} $\beta \in \Omega _1$ and $n\geq0$.
	We modify $\beta $ into $\sigma_1^{2n+1} \sigma_2 (\sigma_1^2 \sigma_2^2 )^{n}$ 
	by the following equalities.
	\begin{align*}
		\beta =\Delta^{2n}(\sigma_1 \sigma_2 ) 
		 &=(\sigma_1 \sigma_2 \sigma_1 )^{2n} (\sigma_1 \sigma_2 )\\
		 &=\sigma_1^{2n} (\sigma_2 \sigma_1^2 \sigma_2 )^{n} \sigma_1 \sigma_2\\
		 &=\sigma_1^{2n+1} (\sigma_2 \sigma_1^2 \sigma_2 )^{n} \sigma_2\\
		 &=\sigma_1^{2n+1} \sigma_2 (\sigma_1^2 \sigma_2^2 )^{n} .
	\end{align*}
	By Lemma~\ref{lem:daltupboundpositive3}, we have 
	\[
	\mathrm{dalt}(\widehat{\beta})\leq n.
	\]
{\bf Case 1.2:} $\beta \in \Omega _1$ and $n<0$.
    We modify $\beta $ into $\sigma_1^{-2|n|-1} \sigma_2^{-1} (\sigma_1^2 \sigma_2^2 )^{-|n|+1}$ 
	by the following equalities.
	\begin{align*}
		\beta =\Delta^{-2|n|}(\sigma_1 \sigma_2 ) 
			 &=(\sigma_1 \sigma_2 \sigma_1 )^{-2|n|} (\sigma_1 \sigma_2 )\\
		     &=(\sigma_1 \sigma_2 \sigma_1 )^{-2|n|+2} (\sigma_1 \sigma_2 )^{-2}\\
			 &=\sigma_1^{-2|n|+2} (\sigma_2 \sigma_1^2 \sigma_2 )^{-|n|+1} (\sigma_1 \sigma_2)^{-2}\\
			 &=\sigma_1^{-2|n|-1} (\sigma_2 \sigma_1^2 \sigma_2 )^{-|n|+1}  \sigma_2^{-1}\\
			 &=\sigma_1^{-2|n|-1} \sigma_2^{-1} (\sigma_1^2 \sigma_2^2 )^{-|n|+1} .
	\end{align*}
	By Lemma~\ref{lem:daltupboundpositive3}, we have  
	\[
	\mathrm{dalt}(\widehat{\beta})\leq |n|-1.
	\]
{\bf Case 2.1:} $\beta \in \Omega _2$ and $n\geq 0$.
    We modify $\beta $ into $\sigma_1^{2n+3} \sigma_2 (\sigma_1^2 \sigma_2^2 )^{n}$ 
	by the following equalities.
	\begin{align*}
		\beta =\Delta^{2n} (\sigma_1 \sigma_2 )^2 
			 &=(\sigma_1 \sigma_2 \sigma_1 )^{2n} (\sigma_1 \sigma_2 )^2\\
			 &=\sigma_1^{2n} (\sigma_2 \sigma_1^2 \sigma_2 )^{n} (\sigma_1 \sigma_2 )^2 \\
			 &=\sigma_1^{2n} (\sigma_2 \sigma_1^2 \sigma_2 )^{n} \sigma_1^2 \sigma_2 \sigma_1 \\
			 &=\sigma_1^{2n+3} (\sigma_2 \sigma_1^2 \sigma_2 )^{n} \sigma_2 \\
			 &=\sigma_1^{2n+3} \sigma_2 (\sigma_1^2 \sigma_2^2 )^{n} .
	\end{align*}
	By Lemma~\ref{lem:daltupboundpositive3}, we have 
	\[
	\mathrm{dalt}(\widehat{\beta})\leq n.
	\]
{\bf Case 2.2:} $\beta \in \Omega _2$ and $n<0$.
    We modify $\beta $ into $\sigma_1^{-2|n|+1} \sigma_2^{-1} (\sigma_1^2 \sigma_2^2 )^{-|n|+1}$ 
	by the following equalities.
	\begin{align*}
		\beta =\Delta^{-2|n|} (\sigma_1 \sigma_2 )^2 
			 &=(\sigma_1 \sigma_2 \sigma_1 )^{-2|n|} (\sigma_1 \sigma_2 )^2\\
		     &=(\sigma_1 \sigma_2 \sigma_1 )^{-2|n|+2} (\sigma_1 \sigma_2 )^{-1}\\
			 &=\sigma_1^{-2|n|+2} (\sigma_2 \sigma_1^2 \sigma_2 )^{-|n|+1} (\sigma_1 \sigma_2)^{-1}\\
			 &=\sigma_1^{-2|n|+1} (\sigma_2 \sigma_1^2 \sigma_2 )^{-|n|+1}  \sigma_2^{-1}\\
			 &=\sigma_1^{-2|n|+1} \sigma_2^{-1} (\sigma_1^2 \sigma_2^2 )^{-|n|+1} .
	\end{align*}
	By Lemma~\ref{lem:daltupboundpositive3}, we have
	\[
	\mathrm{dalt}(\widehat{\beta})\leq |n|-1.
	\]
{\bf Case 3:} $\beta \in \Omega _3$. 
	We may assume that $n\geq 0$, since the mirror image of $\Delta^{2n+1}$ is $\Delta^{-(2n+1)}$.
	We modify $\beta $ into $\sigma_1^{2n+2} \sigma_2 (\sigma_1^2 \sigma_2^2 )^{n}$ 
	by the following equalities.
	\begin{align*}
	\beta =\Delta^{2n+1} &=(\sigma_1 \sigma_2 \sigma_1 )^{2n+1}\\
						 &=\sigma_1^{2n} (\sigma_2 \sigma_1^2 \sigma_2 )^{n} \sigma_1 \sigma_2 \sigma_1\\
						 &=\sigma_1^{2n+2} (\sigma_2 \sigma_1^2 \sigma_2 )^{n} \sigma_2\\
						 &=\sigma_1^{2n+2} \sigma_2 (\sigma_1^2 \sigma_2^2 )^{n} .
	\end{align*}
	By Lemma~\ref{lem:daltupboundpositive3}, we have
	\[
	\mathrm{dalt}(\widehat{\beta})\leq n .
	\]

Let $\beta $ be a $3$-braid in $\Omega _4$.
We may assume that $n\geq 0$, since the mirror image of $\Delta^{-2|n|} \sigma_1^{-p}$ is in $\Omega _5$.
We modify $\beta $ into $\sigma_1^{2n-p} \sigma_2 (\sigma_1^2 \sigma_2^2 )^{n-1}\sigma_1^2 \sigma_2$ 
by the following equalities.
    \begin{align*}
	\beta =\Delta^{2n} \sigma_1^{-p} 
		 &=(\sigma_1 \sigma_2 \sigma_1 )^{2n} \sigma_1^{-p} \\
		 &=\sigma_1^{2n-p} \sigma_2 (\sigma_1^2 \sigma_2^2 )^{n-1}\sigma_1^2 \sigma_2 .
	\end{align*}
{\bf Case 4.1:} $\beta \in \Omega _4$ and $p=2n$.
    We modify $\beta $ into $(\sigma_1^2 \sigma_2^2 )^n$ by the following equalities.
	\begin{align*}
	\beta &=\sigma_2 (\sigma_1^2 \sigma_2^2 )^{n-1}\sigma_1^2 \sigma_2\\
		  &=(\sigma_1^2 \sigma_2^2 )^n .
	\end{align*}
	By Lemma~\ref{lem:daltupboundpositive3}, we have
	\[
	\mathrm{dalt}(\widehat{\beta})\leq n-1.
	\]
{\bf Case 4.2:} $\beta \in \Omega _4$ and $p=2n-1$.
    We modify $\beta $ into $\sigma_1^2 \sigma_2^4 \sigma_1 \sigma_2^3 (\sigma_1^2 \sigma_2^2 )^{n-2}$
	by the following equalities.
	\begin{align*}
	\beta &=\sigma_1 \sigma_2 (\sigma_1^2 \sigma_2^2 )^{n-1}\sigma_1^2 \sigma_2\\
		  &=\sigma_1^2 \sigma_2 \sigma_1 \sigma_2 (\sigma_1^2 \sigma_2^2 )^{n-1}\\
		  &=\sigma_1^3 \sigma_2 \sigma_1^3 \sigma_2^2 (\sigma_1^2 \sigma_2^2 )^{n-2}\\
		  &=\sigma_1^2 \sigma_2^4 \sigma_1 \sigma_2^3 (\sigma_1^2 \sigma_2^2 )^{n-2} .
	\end{align*}
	By Lemma~\ref{lem:daltupboundpositive3}, we have
	\[
	\mathrm{dalt}(\widehat{\beta})\leq n-1 .
	\]
{\bf Case 4.3:} $\beta \in \Omega _4$ and $p=2n+1$.
	We deform $\widehat{\beta}$ into an $(n-1)$-almost alternating diagram
	by the Reidemeister moves as in Figure~\ref{fig:Figure3}. 
	\begin{figure}[htbp]
		\begin{center}
			\includegraphics[scale=0.6]{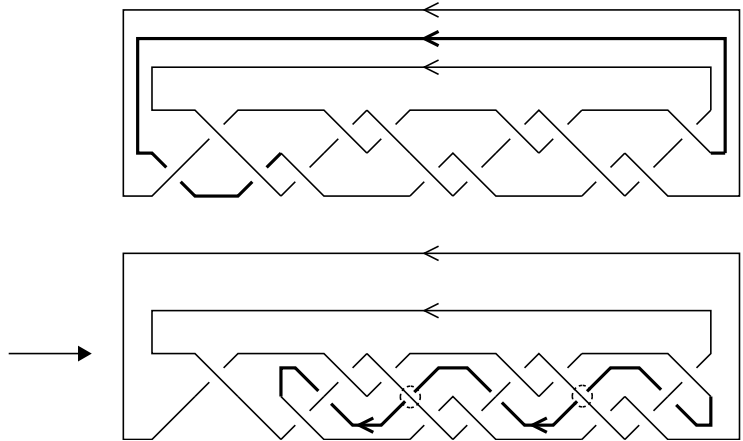}
		\end{center}
		\caption{The case for $n=3$}
		\label{fig:Figure3}
	\end{figure}
	
	Thus we have
	\[
	\mathrm{dalt}(\widehat{\beta})\leq n-1 .
	\]
{\bf Case 4.4:} $\beta \in \Omega _4$ and $p<2n-1$.
	By Lemma~\ref{lem:daltupboundpositive3}, we have
	\[
	\mathrm{dalt}(\widehat{\beta})\leq n .
	\]
{\bf Case 4.5:} $\beta \in \Omega _4$ and $p>2n+1$.
	By Lemma~\ref{lem:daltupbound}, we have
	\[
	\mathrm{dalt}(\widehat{\beta})\leq n .
	\]
{\bf Case 5:} $\beta \in \Omega _5$.
	We may assume that $n\geq 0$, since the closure of the mirror image of $\Delta^{-2|n|} \sigma_2^{q}$
	is in $\widehat{\Omega } _4$. 
	We modify $\beta $ into $\sigma_2^{2n+q} \sigma_1 (\sigma_2^2 \sigma_1^2 )^{n-1}\sigma_2^2 \sigma_1$
	by the following equalities.
	\begin{align*}
	\beta =\Delta^{2n} \sigma_2^{q} 
		 &=(\sigma_1 \sigma_2 \sigma_1 )^{2n} \sigma_2^{q} \\
		 &=\sigma_2^{2n+q} \sigma_1 (\sigma_2^2 \sigma_1^2 )^{n-1}\sigma_2^2 \sigma_1 .
	\end{align*}
	By Lemma~\ref{lem:daltupboundpositive3}, we have
	\[
	\mathrm{dalt}(\widehat{\beta})\leq n .
	\]
{\bf Case 6.1:} $\beta \in \Omega _6$ and $n\geq 0$.\\ 
	We modify $\beta $ into $\sigma_1 (\sigma_2^2 \sigma_1^2 )^{n-1} \sigma_2^2 \sigma_1^{-p_1+1} \sigma_2^{q_1}
	( \prod_{i=2}^{r-1}\sigma_1^{-p_i} \sigma_2^{q_i} ) \sigma_1^{-p_r} \sigma_2^{q_r+2n}$
	by the following equalities.
	\begin{align*}
	\beta &=(\sigma_1 \sigma_2 \sigma_1 )^{2n} \prod_{i=1}^{r}\sigma_1^{-p_i} \sigma_2^{q_i}\\
		  &=\sigma_2^{2n} (\sigma_1 \sigma_2^2 \sigma_1 )^{n} \prod_{i=1}^{r}\sigma_1^{-p_i} \sigma_2^{q_i}\\
		  &=\sigma_1 (\sigma_2^2 \sigma_1^2 )^{n-1} \sigma_2^2 \sigma_1^{-p_1+1} \sigma_2^{q_1}
			\Bigg( \prod_{i=2}^{r-1}\sigma_1^{-p_i} \sigma_2^{q_i} \Bigg) \sigma_1^{-p_r} \sigma_2^{q_r+2n} .
	\end{align*}
By Lemma~\ref{lem:daltupbound}, we have
\[
\mathrm{dalt}(\widehat{\beta})\leq n .
\]
{\bf Case 6.2:} $\beta \in \Omega _6$ and $n< 0$. \\
	We modify $\beta $ into $(\sigma_1^2 \sigma_2^2 )^{-|n|+1} \sigma_1^{-2} \sigma_2^{-1} \sigma_1^{-p_1-2|n|} \sigma_2^{q_1}
	( \prod_{i=2}^{r-1}\sigma_1^{-p_i} \sigma_2^{q_i} ) \sigma_1^{-p_r} \sigma_2^{q_r-1} $
	by the following equalities.
	\begin{align*}
	\beta &=(\sigma_1 \sigma_2 \sigma_1 )^{-2|n|} \prod_{i=1}^{r}\sigma_1^{-p_i} \sigma_2^{q_i}\\
		  &=(\sigma_2 \sigma_1^2 \sigma_2 )^{-|n|} \sigma_1^{-2|n|} \prod_{i=1}^{r}\sigma_1^{-p_i} \sigma_2^{q_i}\\
		  &=(\sigma_1^2 \sigma_2^2 )^{-|n|+1} \sigma_1^{-2} \sigma_2^{-1} \sigma_1^{-p_1-2|n|} \sigma_2^{q_1}
			\Bigg( \prod_{i=2}^{r-1}\sigma_1^{-p_i} \sigma_2^{q_i} \Bigg) \sigma_1^{-p_r} \sigma_2^{q_r-1} .
	\end{align*}
By Lemma~\ref{lem:daltupbound}, we have
\[
\mathrm{dalt}(\widehat{\beta})\leq n.
\]
This completes the proof.
\end{proof}

\section{The alternation number and the dealternating number of a closed positive $3$-braid}
\label{The alternation number and the dealternating number of a closed positive 3-braid}

In this section, we determine the alternation numbers and the dealternating numbers
of some closed positive 3-braid knots.
\begin{theorem} \label{thm:3-braid}
	Let $\beta $ be a positive $3$-braid of the form
	\[
	\Delta^{2n} \prod_{i=1}^{r} \sigma_1^{p_i} \sigma_2^{q_i}
	\]
	such that $\widehat{\beta}$ is a knot and $p_i, q_i\geq 2$ \ for $i=1,2,\dots ,r$,
	where $n\geq 0$, $r\geq 1$. Then we have
	\[
	\mathrm{alt}(\widehat{\beta})=\mathrm{dalt}(\widehat{\beta})=n+r-1.
	\]
\end{theorem}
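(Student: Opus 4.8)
The plan is to prove the theorem by establishing matching upper and lower bounds on $\mathrm{alt}(\widehat{\beta})$ and $\mathrm{dalt}(\widehat{\beta})$. Since we always have $\mathrm{alt}(\widehat{\beta}) \leq \mathrm{dalt}(\widehat{\beta})$ for any link (as noted in the introduction), it suffices to show the upper bound $\mathrm{dalt}(\widehat{\beta}) \leq n+r-1$ and the lower bound $\mathrm{alt}(\widehat{\beta}) \geq n+r-1$; the two inequalities then force all three quantities to coincide. The upper bound is the easy half, and I expect the lower bound to be the main obstacle.

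For the upper bound, I would rewrite $\beta$ as a positive word in $\sigma_1$ and $\sigma_2$ and count its syllables of the form $\sigma_1^{p_i}\sigma_2^{q_i}$. The factor $\Delta^{2n}$ can be absorbed using the computation already carried out in Case~0 of Theorem~\ref{thm:dalt-upbound-3braid}, where $\Delta^{2n}$ is converted into $\sigma_1^{2n}\sigma_2(\sigma_1^2\sigma_2^2)^{n-1}\sigma_1^2\sigma_2$. Concatenating this with $\prod_{i=1}^{r}\sigma_1^{p_i}\sigma_2^{q_i}$ and simplifying, I would arrive at a positive $3$-braid whose normal form $\prod_{i=1}^{R}\sigma_1^{a_i}\sigma_2^{b_i}$ has exactly $R = n+r$ blocks. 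Applying Lemma~\ref{lem:daltupboundpositive3} then yields $\mathrm{dalt}(\widehat{\beta}) \leq R-1 = n+r-1$. Care is needed at the junction where the tail of the $\Delta^{2n}$ expansion meets the head of the product, to verify that no two blocks merge and reduce the count below $n+r$.

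For the lower bound, the natural tool is the lower bound for the alternation number introduced by the first author in \cite{abe1}, which the introduction advertises as the method for this section. Such bounds typically come from the signature, the Rasmussen invariant, or a similar concordance-type invariant: if $\widehat{\beta}$ is obtained from an alternating knot $L'$ by $k$ crossing changes, then $k$ must be at least half the discrepancy between the actual value of the invariant on $\widehat{\beta}$ and the value predicted for an alternating knot (where the invariant is controlled by the signature). The key step is therefore to compute the relevant invariant of $\widehat{\beta}$ explicitly. Because $\beta$ is a positive braid, $\widehat{\beta}$ is a positive knot, so I can use positivity to pin down the Seifert genus from the Seifert surface given by Seifert's algorithm on the closed braid diagram, and to compute the signature or Rasmussen invariant via slice-genus/positivity arguments. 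I expect the genus to grow linearly in $n+r$, producing the bound $\mathrm{alt}(\widehat{\beta}) \geq n+r-1$.

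The hard part will be making the invariant computation sharp enough to match the upper bound exactly rather than merely up to a constant. In particular, the number of Seifert circles (three, from the $3$-braid) and the number of crossings (determined by $n$, $r$, and the $p_i, q_i$) enter the genus formula, and I would need to confirm that the genus depends on $n$ and $r$ in precisely the combination $n+r-1$ after accounting for the hypothesis $p_i, q_i \geq 2$, which guarantees that each syllable genuinely contributes and that the diagram is reduced. Verifying that the lower bound from \cite{abe1} evaluates to exactly $n+r-1$, with no slack, is the delicate point; once that is in hand, the sandwich $n+r-1 \leq \mathrm{alt}(\widehat{\beta}) \leq \mathrm{dalt}(\widehat{\beta}) \leq n+r-1$ closes the argument.
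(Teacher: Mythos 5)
Your overall architecture matches the paper's: the upper bound by rewriting $\beta$ as a positive word with $n+r$ syllable blocks and applying Lemma~\ref{lem:daltupboundpositive3}, and the lower bound via Lemma~\ref{lem:altlowbound}, i.e.\ $\frac{1}{2}|s(K)-\sigma(K)| \le \mathrm{alt}(K)$. The upper-bound half is fine in outline; the ``junction'' computation you defer is handled in the paper by writing $\Delta^{2n} = \sigma_2(\sigma_1^2\sigma_2^2)^{n-1}\sigma_1^2\sigma_2\,\sigma_1^{2n}$ so that the trailing $\sigma_1^{2n}$ merges with $\sigma_1^{p_1}$, and then cyclically conjugating the leading $\sigma_2$ to the end so it merges with $\sigma_2^{q_r}$; both merges are needed to bring the block count down to $n+r$ (a naive concatenation of the Case~0 expansion with $\prod_i\sigma_1^{p_i}\sigma_2^{q_i}$ gives $n+r+1$ blocks and only the weaker bound $n+r$).

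The genuine gap is in the lower bound. You correctly observe that $s(\widehat{\beta})$ is pinned down by positivity ($s = c(D)-o(D)+1$, Lemma~\ref{lem:positiverasmussen}), but you propose to obtain the signature as well ``via slice-genus/positivity arguments,'' and that cannot work: for a positive knot $s$ already equals twice the (slice) genus, so any estimate of $\sigma$ derived from genus data alone tells you nothing about the gap $|s-\sigma|$, which is the entire content of the bound. What is needed is an exact, independent computation of $\sigma(\widehat{\beta})$ showing that it falls short of $s(\widehat{\beta})$ by precisely $2(n+r-1)$. The paper obtains this from Erle's closed formula for the signature of a $3$-braid in $\Omega_6$ (Lemma~\ref{lem:3-braid-signature1}), applied after the nontrivial rewriting $\beta = \Delta^{2n+2r}\prod_{i=1}^{r}\sigma_1^{-1}\sigma_2^{p_i-2}\sigma_1^{-1}\sigma_2^{q_i-2}$ of Lemma~\ref{lem:3-braid-signature2}; this is also exactly where the hypothesis $p_i,q_i\ge 2$ enters. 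Without some such explicit signature computation your lower bound does not close, and the sandwich argument fails.
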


We recall two $\mathbb{Z}$-valued invariants. 
We denote the \textit{signature}~\cite{mur3} of a link $L$ by $\sigma(L)$
(with the sign conventions such that the right-handed trefoil has signature two).
In this paper, we define the signature of a link diagram as that of the link represented by the diagram.
We denote the \textit{Rasmussen $s$-invariant} of a knot by $s(K)$ \cite{ras1}.
The first author gave a lower bound for the alternation number of a knot.
\begin{lemma}[\cite{abe1}] \label{lem:altlowbound}
	Let $K$ be a knot. Then we have
	\[
	\frac{1}{2} |s(K)-\sigma (K)|\leq \mathrm{alt} (K).
	\]
\end{lemma}

Rasmussen showed the following.
\begin{lemma}[\cite{ras1}] \label{lem:positiverasmussen}
	Let $D$ be a positive diagram of a positive knot $K$. Then we have 
	\[
	s(K)=c(D)-o(D)+1,
	\]
	where $c(D)$ is the crossing number of $D$ and $o(D)$ is the number of Seifert circles of $D$.
\end{lemma}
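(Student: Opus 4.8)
The plan is to bound $s(K)$ from both sides by $c(D)-o(D)+1$, invoking the two structural properties of the Rasmussen invariant proved in \cite{ras1}: the slice--Bennequin inequality $s(K)\ge w(D)-o(D)+1$, valid for \emph{every} diagram $D$ of $K$ (here $w(D)$ is the writhe, i.e.\ the signed crossing count), and the genus bound $|s(K)|\le 2g_\ast(K)$, where $g_\ast$ denotes the smooth $4$-ball genus of $K$.

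First I would establish the lower bound. Since $D$ is a positive diagram, every crossing is positive, so $w(D)=c(D)$. Substituting this into the slice--Bennequin inequality gives $s(K)\ge c(D)-o(D)+1$. In particular this quantity is nonnegative, so $s(K)=|s(K)|$, which is what lets me drop the absolute value in the genus bound below.

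Next I would establish the matching upper bound using Seifert's algorithm applied to $D$. Oriented smoothing of all $c(D)$ crossings yields $o(D)$ Seifert disks joined by $c(D)$ twisted bands, producing a connected Seifert surface $F$ for $K$ with $\chi(F)=o(D)-c(D)$. As $K$ is a knot, $F$ has a single boundary component, so $\chi(F)=1-2g(F)$, whence $2g(F)=c(D)-o(D)+1$. Pushing $F$ into the $4$-ball shows $g_\ast(K)\le g(F)$, so the genus bound gives $s(K)=|s(K)|\le 2g_\ast(K)\le 2g(F)=c(D)-o(D)+1$.

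Combining the two inequalities yields $s(K)=c(D)-o(D)+1$, as claimed. The Euler-characteristic bookkeeping for Seifert's algorithm and the observation $w(D)=c(D)$ are routine; the genuine content, which I would cite from \cite{ras1} as a black box, is the slice--Bennequin inequality for $s$, whose proof rests on Lee's deformation of Khovanov homology and the control of the $s$-grading under oriented resolutions and crossing changes. I expect that inequality to be the one real obstacle, and I do not plan to reprove it here; everything else in the argument is elementary surface topology.
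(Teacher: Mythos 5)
The paper offers no proof of this lemma at all --- it is quoted as a known result with a citation to Rasmussen~\cite{ras1}, so the only thing to compare against is that source. Your argument is correct and is essentially Rasmussen's own: the slice--Bennequin inequality $s(K)\ge w(D)-o(D)+1$ gives the lower bound since $w(D)=c(D)$ for a positive diagram, and the bound $|s(K)|\le 2g_\ast(K)$ together with the genus $2g(F)=c(D)-o(D)+1$ of the Seifert-algorithm surface gives the matching upper bound.
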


Erle calculated the signatures of closed 3-braids.
\begin{lemma}[\cite{erl1}] \label{lem:3-braid-signature1}
    Let $\beta $ be a $3$-braid in $\Omega _6$.
	With the notation of Lemma~\ref{lem:3-braidclass}, we have
	\[
	\sigma (\widehat{\beta}) = 4n-\sum_{i=1}^{r}(p_i-q_i).
	\]
\end{lemma}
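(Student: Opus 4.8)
The plan is to reduce the statement to a finite linear-algebra computation: the signature $\sigma(\widehat{\beta})$ is the signature of a symmetric integer bilinear form attached to a spanning surface of $\widehat{\beta}$. I would realize this form in one of two equivalent ways, either as $V+V^{T}$ for a Seifert matrix $V$ coming from Seifert's algorithm applied to the closed braid diagram, or --- more convenient for the bookkeeping --- as the Goeritz form $G$ of a checkerboard coloring of the standard diagram of $\widehat{\beta}$, in which case $\sigma(\widehat{\beta})=\operatorname{sig}(G)-\mu$ with $\mu$ the Gordon--Litherland correction term. Before the general computation I would fix the sign convention against the base case $n=0$, $r=1$: here $\widehat{\sigma_1^{-p}\sigma_2^{q}}$ is the connected sum $T(2,-p)\,\#\,T(2,q)$, so additivity of the signature under connected sum together with $\sigma(T(2,k))=k-1$ (in the convention of the paper, where the right-handed trefoil $T(2,3)$ has signature $2$) gives $\sigma=(q-1)+\bigl(-(p-1)\bigr)=q-p=-(p-q)$, matching the claim.

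For the general case I would exhibit the banded block structure of the form. Writing the braid (after the rewriting used in the proof of Theorem~\ref{thm:dalt-upbound-3braid}) as a word in which the negative $\sigma_1$-stacks and the positive $\sigma_2$-stacks alternate, a natural homology basis splits into blocks, one per syllable. Each stack $\sigma_1^{-p_i}$ produces a negative-definite block modeled on the Seifert form of $T(2,-p_i)$, and each $\sigma_2^{q_i}$ a positive-definite block modeled on $T(2,q_i)$; the central factor $\Delta^{2n}=(\sigma_1\sigma_2)^{3n}$ contributes a block of signature $4n$ (this is the one place where the mixed signs of the full twist matter, and where the torus classes $\Omega_0,\dots,\Omega_2$ would yield a different count). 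Summing the naive per-block torus contributions over the $r$ stacks of each type gives $\sum_{i=1}^{r}\bigl(-(p_i-1)+(q_i-1)\bigr)=-\sum_{i=1}^{r}(p_i-q_i)$, the constant $\pm1$ corrections cancelling within each syllable pair $\sigma_1^{-p_i}\sigma_2^{q_i}$; adding the central $4n$ then yields $4n-\sum_{i=1}^{r}(p_i-q_i)$.

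\textbf{The main obstacle} is that the blocks are not decoupled: because consecutive $\sigma_1$- and $\sigma_2$-stacks share a Seifert disk (equivalently, adjacent regions of the checkerboard coloring meet), the form carries off-diagonal coupling entries between adjacent blocks, and in the link case it has a nontrivial kernel. I would resolve this by an explicit congruence (simultaneous symmetric row and column operations) that clears the coupling entries without changing the inertia, showing that the cross-terms contribute $0$ to the signature and isolating the nullity. In the Goeritz formulation this is cleanest: the coloring can be chosen so that $G$ is tridiagonal/block-diagonal up to manifestly sign-definite corrections, and $\mu$ is read off directly from the writhe and crossing data, so the count reduces to a transparent tally of $+1$ per positive $\sigma_2$ half-twist and $-1$ per negative $\sigma_1$ half-twist, plus $4n$.

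As an organizing alternative I would set the computation up inductively on $r$ (peeling off one syllable pair) and on $n$ (peeling off one full twist), controlling the jumps by the crossing-change inequality $|\sigma(K_{+})-\sigma(K_{-})|\le 2$ together with the explicit effect of a half-twist on the Seifert matrix; or, most conceptually, via the reduced Burau representation evaluated at $t=-1$, which maps $B_3$ into $SL_2(\mathbb{Z})$ with $\Delta^{2}\mapsto -I$, combined with the Gambaudo--Ghys formula expressing $\sigma(\widehat{\beta})$ through the Meyer cocycle (a Rademacher-function computation in $SL_2$), where the central $-I$ supplies the $4n$ cleanly. In every route the substantive work is the decoupling or cocycle evaluation; once that is in hand, the formula $4n-\sum_{i=1}^{r}(p_i-q_i)$ drops out.
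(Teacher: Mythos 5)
First, a point of reference: the paper contains no proof of this statement at all --- Lemma~\ref{lem:3-braid-signature1} is imported wholesale from Erle~\cite{erl1} --- so your attempt must be judged as a reconstruction of Erle's computation rather than against an in-paper argument. Your framework and sanity checks are sound: the signature is indeed the signature of $V+V^{T}$ (equivalently of the Goeritz form with the Gordon--Litherland correction); the base case is right, since the closure of $\sigma_1^{-p}\sigma_2^{q}$ is the connected sum $T(2,-p)\,\#\,T(2,q)$, and additivity gives $q-p$, which matches $-(p_1-q_1)$ in the paper's convention (right-handed trefoil has signature $+2$); and the identification of the paper's sign convention is correct.

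Nevertheless there is a genuine gap, and you have located it yourself without filling it. The entire content of the lemma is exactly the two assertions of your second paragraph: (i) that the off-diagonal coupling between syllable blocks can be cleared by a congruence ``without changing the inertia,'' and (ii) that the central factor $\Delta^{2n}$ contributes exactly $4n$. Neither is proved. For (i), clearing off-diagonal blocks by symmetric row and column operations replaces the diagonal blocks by Schur complements, which in general do change inertia; showing that for this particular diagram the corrections stay sign-definite (so that the naive per-syllable tally survives) requires writing down the actual Seifert or Goeritz matrix and doing the computation --- that computation is what Erle's paper consists of, and it never appears in your proposal. For (ii), $\Delta^{2n}$ is central and does not correspond to a direct summand of $H_1$ of any Seifert surface, so ``contributes a block of signature $4n$'' is not a meaningful reduction; moreover, the naive additive count is genuinely special to the $\Omega_6$ normal form --- for example $T(3,4)$, the closure of $\Delta^{2}\sigma_1\sigma_2$, has signature $6$ in the paper's convention, not $4$, so additivity of per-syllable torus contributions plus $4n$ is false as a general principle and cannot be invoked; that it holds for $\Omega_6$ words with $p_i,q_i>0$ is the theorem itself. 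Your fallback routes do not close the gap either: the induction controlled by $|\sigma(K_+)-\sigma(K_-)|\le 2$ yields only inequalities, and pinning down exact values would need nullity, determinant-sign, or definiteness input you do not supply; the Gambaudo--Ghys/Meyer-cocycle route via the Burau representation at $t=-1$ is viable in principle, but the cocycle evaluation --- again the actual work --- is only named, not carried out. In sum, the proposal is a reasonable plan with correct boundary checks, but the decisive computation is asserted rather than performed.
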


\begin{lemma} \label{lem:3-braid-signature2}
    With the notation of Theorem~\ref{thm:3-braid}, we have
	\[
	\sigma (\widehat{\beta})=4n-2r+\sum_{i=1}^{r}(p_i+q_i).
	\]
\end{lemma}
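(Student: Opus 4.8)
The plan is to reduce the computation to Erle's formula (Lemma~\ref{lem:3-braid-signature1}), which evaluates the signature of any closed $3$-braid whose representative lies in $\Omega_6$, that is, a word of the form $\Delta^{2m}\prod_j \sigma_1^{-a_j}\sigma_2^{b_j}$ with $a_j,b_j>0$. Since $\beta=\Delta^{2n}\prod_{i=1}^r \sigma_1^{p_i}\sigma_2^{q_i}$ is a \emph{positive} word, its $\sigma_1$-syllables carry the wrong sign, so the heart of the argument is to exhibit a braid in $\Omega_6$ that is conjugate to $\beta$ and then to read off its parameters. Because the signature depends only on the link $\widehat{\beta}$, any conjugation (equivalently, cyclic rewriting of the closed braid) is harmless, and I would work up to conjugacy throughout.

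The key algebraic input is that the full twist $\Delta^2=(\sigma_1\sigma_2)^3=\sigma_1^2\sigma_2\sigma_1^2\sigma_2$ is central in $B_3$, together with the relations $\sigma_1\sigma_2=\Delta\sigma_1^{-1}$, $\Delta\sigma_1\Delta^{-1}=\sigma_2$, and $\sigma_2^{-1}\sigma_1\sigma_2=\sigma_1\sigma_2\sigma_1^{-1}$. Using these, I would show that a single positive block can be traded for exactly one full twist at the cost of reversing the sign of its $\sigma_1$-part: up to conjugation, $\sigma_1^{p}\sigma_2^{q}$ equals $\Delta^{2}\,\sigma_1^{-1}\sigma_2^{\,p-2}\sigma_1^{-1}\sigma_2^{\,q-2}$. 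Applying this to each of the $r$ blocks and pulling the $r$ central factors $\Delta^2$ to the front yields
\[
\beta \ \sim\ \Delta^{2(n+r)}\prod_{i=1}^{r}\sigma_1^{-1}\sigma_2^{\,p_i-2}\sigma_1^{-1}\sigma_2^{\,q_i-2},
\]
a braid in $\Omega_6$ whose $\Delta$-exponent is $2(n+r)$, whose total negative $\sigma_1$-exponent is $2r$, and whose total $\sigma_2$-exponent is $\sum_{i=1}^r(p_i+q_i)-4r$.

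With this representative in hand, Erle's formula (Lemma~\ref{lem:3-braid-signature1}), applied with $m=n+r$, gives $\sigma(\widehat{\beta})=4(n+r)-\big(2r-(\sum_i(p_i+q_i)-4r)\big)=4n-2r+\sum_{i=1}^r(p_i+q_i)$, as claimed. One verifies the quantity entering Erle's formula independently by noting that conjugation preserves the exponent sum, so that $6n+\sum_i(p_i+q_i)$ must equal the exponent sum $6(n+r)-2r+(\sum_i(p_i+q_i)-4r)$ of the $\Omega_6$ representative; this identity is automatic and pins down $\sum_j(a_j-b_j)=6r-\sum_i(p_i+q_i)$.

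I expect the main obstacle to be the bookkeeping of the conversion in the second paragraph, and in particular the degenerate syllables that arise when some $p_i$ or $q_i$ equals $2$: there a factor $\sigma_2^{0}$ appears and two adjacent $\sigma_1^{-1}$'s must be amalgamated into a single $\sigma_1^{-2}$ before the word genuinely lies in $\Omega_6$. One must check that the resulting word is a bona fide $\Omega_6$ representative, i.e.\ that not all blocks collapse, and here the hypothesis that $\widehat{\beta}$ is a knot is essential: if every $p_i=q_i=2$ the braid is pure and its closure is a three-component link, so this case is excluded, and in all remaining cases enough $\sigma_2$-exponents survive to keep the word in $\Omega_6$ and to keep Lemma~\ref{lem:3-braid-signature1} applicable.
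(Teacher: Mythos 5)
Your proposal is correct and follows essentially the same route as the paper: both convert each positive block $\sigma_1^{p_i}\sigma_2^{q_i}$ into $\Delta^{2}\sigma_1^{-1}\sigma_2^{p_i-2}\sigma_1^{-1}\sigma_2^{q_i-2}$ up to conjugation, collect the central full twists to rewrite $\beta$ as $\Delta^{2(n+r)}\prod_{i=1}^{r}\sigma_1^{-1}\sigma_2^{p_i-2}\sigma_1^{-1}\sigma_2^{q_i-2}$, and then apply Erle's formula (Lemma~\ref{lem:3-braid-signature1}). Your extra care about the degenerate syllables $\sigma_2^{0}$ when some $p_i$ or $q_i$ equals $2$ (excluded from full collapse by the knot hypothesis) is a point the paper passes over silently, but it does not change the argument.
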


\begin{proof}
We modify $\beta $ into $\Delta ^{2n+2r} \prod_{i=1}^{r} \sigma_1^{-1} \sigma_2^{p_i-2} \sigma_1^{-1} \sigma_2^{q_i-2}$
by the following equalities.
\begin{align*}
\beta &=\Delta ^{2n} \prod_{i=1}^{r} \sigma_1^{p_i} \sigma_2^{q_i}\\
      &=\Delta ^{2n} \prod_{i=1}^{r} \sigma_1 \sigma_1^{p_i-1} \sigma_2 \sigma_2^{q_i-1}\\
      &=\Delta ^{2n} \prod_{i=1}^{r} \sigma_1^{p_i-1} \sigma_2 \sigma_2^{q_i-1} \sigma_1\\ 
      &=\Delta ^{2n} \prod_{i=1}^{r} \sigma_2^{-1} \sigma_1^{p_i-1} \sigma_2 \sigma_2^{q_i-1} \sigma_1 \sigma_2\\
      &=\Delta ^{2n} \prod_{i=1}^{r} \sigma_1 \sigma_2^{p_i-1} \sigma_1^{-1} \sigma_2^{q_i-1} \sigma_1 \sigma_2\\
      &=\Delta ^{2n} \prod_{i=1}^{r} \Delta \sigma_1^{-1} \sigma_2^{p_i-2} \sigma_1^{-1} \sigma_2^{q_i-2} \Delta\\
      &=\Delta ^{2n+2r} \prod_{i=1}^{r} \sigma_1^{-1} \sigma_2^{p_i-2} \sigma_1^{-1} \sigma_2^{q_i-2}.
\end{align*}
By Lemma~\ref{lem:3-braid-signature1}, we have
\begin{align*}
\sigma (\widehat{\beta}) &=4(n+r)+\sum_{i=1}^{r}(p_i+q_i-6) \\
						 &=4n-2r+\sum_{i=1}^{r}(p_i+q_i).
\end{align*}
\end{proof}

\begin{proof}[Proof of Theorem~\ref{thm:3-braid}]
First we show that $\mathrm{dalt}(\widehat{\beta})\leq n+r-1$.
We modify $\beta $ into $\sigma_2 (\sigma_1^2 \sigma_2^2)^{n-1} \sigma_1^2 \sigma_2 \sigma_1^{2n}
\prod_{i=1}^{r} \sigma_1^{p_i} \sigma_2^{q_i}$ by the following equalities.
\begin{eqnarray*}
	\beta &=& \Delta ^{2n} \prod_{i=1}^{r} \sigma_1^{p_i} \sigma_2^{q_i}\\
		  &=& (\sigma_2 \sigma_1^2 \sigma_2)^n \sigma_1^{2n} \prod_{i=1}^{r} \sigma_1^{p_i} \sigma_2^{q_i}\\
		  &=& \sigma_2 (\sigma_1^2 \sigma_2^2)^{n-1} \sigma_1^2 \sigma_2 \sigma_1^{2n}
			  \prod_{i=1}^{r} \sigma_1^{p_i} \sigma_2^{q_i}.
\end{eqnarray*}
If $r=1$, we modify $\beta$ into $(\sigma_1^2 \sigma_2^2)^{n-1} \sigma_1^2 \sigma_2 \sigma_1^{2n+p_1} \sigma_2^{q_1+1}$
by the following equalities.
\begin{eqnarray*}
	\beta &=& \sigma_2 (\sigma_1^2 \sigma_2^2)^{n-1} \sigma_1^2 \sigma_2 \sigma_1^{2n} \sigma_1^{p_1} \sigma_2^{q_1}\\
		  &=& (\sigma_1^2 \sigma_2^2)^{n-1} \sigma_1^2 \sigma_2 \sigma_1^{2n+p_1} \sigma_2^{q_1+1}.
\end{eqnarray*}
If $r\geq 2$, we modify $\beta$ into $(\sigma_1^2 \sigma_2^2)^{n-1} \sigma_1^2 \sigma_2 \sigma_1^{2n+p_1} \sigma_2^{q_1} 
\{\prod_{i=2}^{r-1} \sigma_1^{p_i} \sigma_2^{q_i} \} \sigma_1^{p_r} \sigma_2^{q_r+1}$ by the following equalities.
\begin{eqnarray*}
	\beta &=& \sigma_2 (\sigma_1^2 \sigma_2^2)^{n-1} \sigma_1^2 \sigma_2 \sigma_1^{2n} 
			  \prod_{i=1}^{r} \sigma_1^{p_i} \sigma_2^{q_i}\\
		  &=& (\sigma_1^2 \sigma_2^2)^{n-1} \sigma_1^2 \sigma_2 \sigma_1^{2n+p_1} \sigma_2^{q_1} 
			  \{\prod_{i=2}^{r-1} \sigma_1^{p_i} \sigma_2^{q_i} \} \sigma_1^{p_r} \sigma_2^{q_r+1} .
\end{eqnarray*}
By Lemma~\ref{lem:daltupboundpositive3}, we have
\[
\mathrm{dalt}(\widehat{\beta})\leq n+r-1.
\]
Next we show that $\mathrm{alt}(\widehat{\beta})\geq n+r-1.$
Lemmas~\ref{lem:3-braid-signature2} and \ref{lem:positiverasmussen} imply that
\begin{eqnarray*}
s(\widehat{\beta })       &=& 6n-2+\sum_{i=1}^{r}(p_i+q_i),\\
\sigma (\widehat{\beta }) &=& 4n-2r+\sum_{i=1}^{r}(p_i+q_i).
\end{eqnarray*}
By Lemma~\ref{lem:altlowbound}, we have
\[
\frac{1}{2} |s(K)-\sigma (K)|=n+r-1\leq \mathrm{alt}(\widehat{\beta}).
\]
Therefore, we have
\[
\mathrm{alt}(\widehat{\beta})=\mathrm{dalt}(\widehat{\beta})=n+r-1. 
\]
 \hfill{} {\setlength{\fboxsep}{2.5pt}\fbox{}}
\end{proof}
\section{On $\overline{t'_2}$ moves for a closed positive 3-braid}
\label{A relation between the alternateness}

In this section, we show that there exist 
infinitely many positive knots with any dealternating number (or any alternation number) and any braid index.
\begin{theorem} \label{coro:braidaltexist}
	For any $m\in \mathbb{Z}_{\geq 0}$ and $n\geq 3$, there exist infinitely many positive knots $K$ with
	$\mathrm{alt} (K)=\mathrm{dalt} (K)=m$ and $b(K)=n$.
\end{theorem}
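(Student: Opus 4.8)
The plan is to construct an explicit infinite family of positive $3$-braid
closures inside the framework of Theorem~\ref{thm:3-braid}, and then modify
the braid index via a standard stabilization-type trick while controlling the
alternation and dealternating numbers. Concretely, Theorem~\ref{thm:3-braid}
already hands us, for each $m \geq 0$, infinitely many closed positive $3$-braid
knots with $\mathrm{alt} = \mathrm{dalt} = m$: take $\beta =
\Delta^{2a}\prod_{i=1}^{b}\sigma_1^{p_i}\sigma_2^{q_i}$ with $p_i,q_i \geq 2$ and
$a + b - 1 = m$, varying the exponents $p_i, q_i$ to get infinitely many
distinct knots (distinguished, e.g., by crossing number or genus). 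This
settles the case $n = 3$. The remaining task is to promote the braid index from
$3$ to an arbitrary $n \geq 3$ while preserving positivity and the equalities
$\mathrm{alt} = \mathrm{dalt} = m$.

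\textbf{Raising the braid index.} First I would introduce extra strands by
connect-summing (or otherwise composing) the $3$-braid knot $\widehat{\beta}$
with positive torus knots of higher braid index, for instance the
$(n-2, k)$-torus knots $T_{n-2,k}$, which are alternating (hence contribute $0$
to both $\mathrm{alt}$ and $\mathrm{dalt}$) only in degenerate cases, so more
carefully one should take a connected sum with an \emph{alternating} positive
knot of braid index $n-2$ so as not to disturb the invariants. A cleaner route
is to build a single positive $n$-braid directly: place $\widehat{\beta}$ as a
closed $3$-braid on the first three strands and append positive generators
$\sigma_3, \sigma_4, \dots, \sigma_{n-1}$ (each appearing once, giving a
Markov/Birman--Menasco stabilization pattern) so that the closure is a
connected sum of $\widehat{\beta}$ with an unknotted or alternating remainder.
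Since $\mathrm{alt}$ and $\mathrm{dalt}$ are additive (or at least subadditive,
with the lower bound of Lemma~\ref{lem:altlowbound} controlling them from below)
under connected sum with alternating pieces, the invariants are unchanged, while
the braid index is forced up to exactly $n$.

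\textbf{Pinning down the braid index.} The upper bound $b(K) \leq n$ is immediate
from the explicit $n$-braid presentation. The lower bound $b(K) \geq n$ is where
I expect the real work: one must certify that the constructed knot genuinely
requires $n$ strands. For positive braids the Morton--Franks--Williams (MFW)
inequality is sharp (by a theorem for positive braid links), so I would compute
the HOMFLY breadth or, equivalently, use the fact that for a connected sum the
braid index satisfies $b(K_1 \# K_2) = b(K_1) + b(K_2) - 1$ (a theorem of
Birman--Menasco). Choosing the alternating summand to have braid index exactly
$n - 2$ and $\widehat{\beta}$ to have braid index $3$, additivity of braid index
under connected sum yields $b(K) = 3 + (n-2) - 1 = n$ on the nose.

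\textbf{Main obstacle.} The delicate point is simultaneously controlling all
three quantities: raising $b(K)$ to $n$ without inflating $\mathrm{alt}$ or
$\mathrm{dalt}$, and verifying the braid index lower bound rigorously. The
cleanest argument relies on (i) additivity of the braid index under connected
sum (Birman--Menasco), and (ii) the fact that connect-summing with an
alternating knot leaves $\mathrm{alt}$ and $\mathrm{dalt}$ unchanged --- for the
lower bound this again follows from Lemma~\ref{lem:altlowbound} together with
additivity of $s$ and $\sigma$ under connected sum, and for the upper bound from
the fact that an alternating summand can be made alternating with zero crossing
changes. Securing these additivity statements, and ensuring the infinitely many
choices of $(p_i, q_i)$ yield pairwise distinct knots (e.g.\ via distinct Seifert
genera, which are computable for positive braids from the Seifert algorithm),
completes the proof.
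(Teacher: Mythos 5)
Your construction is genuinely different from the paper's. The paper stays inside the world of $3$-braids and raises the braid index by applying a $\overline{t_2'}$ move $k$ times at a single crossing of a closed positive $3$-braid $\prod\sigma_1^{p_i}\sigma_2^{q_i}$ with $p_i,q_i\ge 3$: this keeps the diagram positive (so $s$ is unchanged by Lemma~\ref{lem:positiverasmussen}), keeps $\sigma$ unchanged (Lemmas~\ref{prop:t_2movelinksignature} and~\ref{prop:t_2moveknotsignature}), hence pins $\mathrm{alt}=\mathrm{dalt}=r-1$ via Lemma~\ref{lem:altlowbound}, and the braid index $k+3$ is computed by combining Yamada's Seifert-circle bound with an explicit HOMFLY skein calculation and the sharpness of MFW for closed positive $3$-braids (Lemma~\ref{lem:positiveMWF}); the infinitude comes from distinct values of $s$. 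You instead connect-sum a closed positive $3$-braid from Theorem~\ref{thm:3-braid} with a positive alternating knot of braid index $n-2$ and invoke Birman--Menasco additivity of the braid index under connected sum. Your route buys a shorter argument at the price of importing a much heavier theorem (braid-index additivity) where the paper uses only polynomial bounds; the paper's route is self-contained but requires the signature bookkeeping for $\overline{t_2'}$ moves.

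That said, your write-up has three soft spots you should repair. First, the ``append $\sigma_3,\dots,\sigma_{n-1}$ each once'' variant does not work: a braid word in which each new generator occurs exactly once destabilizes, so the closure is the same knot and its braid index is still $3$; you must discard that option and commit to the connected-sum route. Second, you need to actually exhibit a \emph{positive alternating} knot of braid index $n-2$ for every $n\ge 4$ (a connected sum of $n-3$ copies of the trefoil works, again by Birman--Menasco), and you need the subadditivity $\mathrm{dalt}(K\#A)\le\mathrm{dalt}(K)$ for alternating $A$, which requires the standard over/under edge-matching argument showing that a connected sum of alternating diagrams can be arranged to be alternating; state this explicitly rather than asserting additivity. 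Third, you must verify $b(\widehat{\beta})=3$ for your base knots (for $m\ge 1$ they are non-alternating hence not $2$-braid closures; for $m=0$ note $\widehat{\sigma_1^p\sigma_2^q}=T_{2,p}\#T_{2,q}$), and your parenthetical that MFW is sharp for all positive braid links is not something you may assume --- the paper only uses Nakamura's sharpness result for closed positive $3$-braids. With these points tightened, your argument goes through.
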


\begin{remark} 
If $L$ is a link with $b(L)=2$, then we have $\mathrm{alt} (L)=\mathrm{dalt} (L)=0$ 
since $L$ is a $(2,k)$-torus link for some integer $k\ne 0,\pm 1$.

\end{remark}

A \textit{band move} is a local move along the orientation on a link diagram as shown in Figure~\ref{fig:bandmove}.
\begin{figure}[htbp]
  \begin{center}
    \includegraphics[scale=0.8]{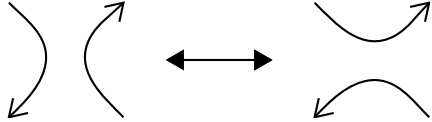}
  \end{center}
  \caption{A band move}
  \label{fig:bandmove}
\end{figure}

\noindent
Murasugi showed the following.
\begin{lemma}[\cite{mur2}] \label{lem:t_2movesignature}
	Let $D$ and $D'$ be link diagrams which are related to each other by one band move. Then we have
	\[
	-1\leq \sigma(D)-\sigma(D')\leq 1.
	\]
\end{lemma}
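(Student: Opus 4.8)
The plan is to realize the two diagrams by Seifert surfaces differing by a single band, and then to read off the effect on the symmetrized Seifert matrix, using the fact that a one-row bordering of a symmetric matrix changes its signature by at most one.

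First I would fix a connected Seifert surface $F$ for $D$ and let $V$ be an associated Seifert matrix, so that, by the definition of the signature via a Seifert matrix, $\sigma(D)=\operatorname{sign}(V+V^{T})$. Because the band move of Figure~\ref{fig:bandmove} is performed along the orientation, the two arcs of $D$ to which the band is attached are anti-parallel, and an anti-parallel band can be attached to $\partial F=D$ as an orientation-preserving $1$-handle. Thus $F':=F\cup(\text{band})$ is again an oriented surface with $\partial F'=D'$, i.e.\ a Seifert surface for $D'$; and since attaching a single band lowers the Euler characteristic by one while keeping the surface connected, the first Betti number satisfies $b_1(F')=b_1(F)+1$. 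Verifying this orientation-compatible attachment, and hence that $F'$ genuinely contains $F$ as a subsurface, is the step I expect to be the main obstacle: it is exactly where the hypothesis ``along the orientation'' is used, and it is what rules out the band being glued in so as to destroy orientability or to alter the old Seifert form.

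Next I would compute the Seifert matrix $V'$ of $F'$ in a basis of $H_1(F')$ obtained by extending a basis $e_1,\dots,e_k$ of $H_1(F)$ by the single new cycle $\gamma$ running through the band. Since the classes $e_i$ are represented by curves already lying in $F\subset F'$ and linking numbers are computed in $S^3$, the upper-left $k\times k$ block of $V'$ is unchanged and equals $V$; only one new row and one new column, recording the linking data of $\gamma$, appear. Hence, after a suitable ordering of the basis,
\[
V'=\begin{pmatrix} V & \xi \\ \eta^{T} & a \end{pmatrix}
\]
for some $\xi,\eta\in\mathbb{Z}^{k}$ and $a\in\mathbb{Z}$, so that the symmetric matrices $G:=V+V^{T}$ and $G':=V'+V'^{T}$ satisfy
\[
G'=\begin{pmatrix} G & \xi+\eta \\ (\xi+\eta)^{T} & 2a \end{pmatrix},
\]
exhibiting $G'$ as a bordering of $G$ by one row and one column.

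Finally I would invoke the Cauchy interlacing theorem for symmetric matrices: if $G$ is obtained from $G'$ by deleting the last row and column, then the number of positive eigenvalues and the number of negative eigenvalues each change by at most one, whence $|\operatorname{sign}(G')-\operatorname{sign}(G)|\le 1$. Since $\sigma(D)=\operatorname{sign}(G)$ and $\sigma(D')=\operatorname{sign}(G')$, this yields $-1\le\sigma(D)-\sigma(D')\le 1$, as claimed.
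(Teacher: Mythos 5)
The paper itself gives no proof of this lemma: it is quoted directly from Murasugi \cite{mur2}. So your argument can only be measured against the classical proof, and in substance it \emph{is} the classical proof. The reduction to linear algebra is exactly right: if the symmetrized matrices $G=V+V^{T}$ and $G'=V'+V'^{T}$ differ by a one-row bordering, then by Cauchy interlacing a bordering can never decrease the number of positive (or negative) eigenvalues and can increase each count by at most one, so the signature changes by at most one. (State it with this monotonicity; ``each count changes by at most one'' alone would still allow a signature jump of $2$.) Likewise, your observation that the old $k\times k$ block of $V'$ equals $V$ is correct, since the old cycles and their push-offs are untouched by the band.

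The one step that does not hold as written is attaching the band to an \emph{arbitrary} connected Seifert surface $F$. The band $b$ of Figure~\ref{fig:bandmove} is a specific band in $S^3$ prescribed by the diagram, and nothing prevents its interior from meeting the interior of a randomly chosen $F$; coherence of orientation --- which you correctly identify as the role of the hypothesis ``along the orientation,'' and which is what makes the $1$-handle attachment orientation-preserving --- does not by itself make $F\cup b$ an embedded surface. The standard repair is to choose $F$ adapted to the diagram: let $F$ be the surface produced by Seifert's algorithm on $D$. Because the band move is local and introduces no new crossings, the Seifert circles of $D'$ are those of $D$ with the two circles through the site of the move fused (or one circle split in two), and the Seifert algorithm surface of $D'$ is isotopic to $F\cup b$, where $b$ is a flat band lying in the small disk where the move takes place, kept below the disks of any Seifert circles nesting over that region; in particular $F\cup b$ is embedded, oriented, connected, and spans $D'$. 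With this choice of $F$ your bordering computation and the interlacing argument go through verbatim; this is essentially Murasugi's original argument (see also \cite{kawauchi-1996-book}).
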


\begin{remark}[cf.~\cite{kawauchi-1996-book}]
Let $D_+$ and $D_-$ be link diagrams such that $D_-$ is obtained from $D_+$
by a crossing change at a crossing $c$ from positive to negative. 
Let $D/c$ be the diagram obtained from $D_+$ (or $D_-$) by smoothing $c$. 
The following inequalities hold.
\begin{align*}
 0 &\leq  \sigma (D_+) -\sigma (D_-) \leq 2 ,\\ 
-1 &\leq  \sigma (D_{\pm }) -\sigma (D/c) \leq 1 .
\end{align*}
\end{remark}

A \textit{$\overline{t_2'}$ move} is a local move along the orientation on a link diagram 
as shown in Figure~\ref{fig:t_2move}. 
\begin{figure}[htbp]
	\begin{center}
		\includegraphics[scale=0.8]{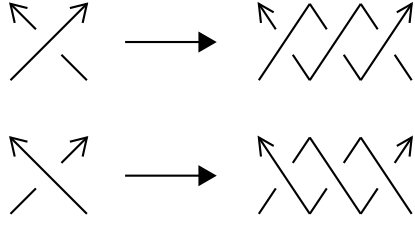}
	\end{center}
	\caption{A $\overline{t_2'}$ move}
	\label{fig:t_2move}
\end{figure}

\begin{lemma} \label{prop:t_2movelinksignature}
	Let $D$ be a link diagram, and let $c$ be a crossing of $D$.
	Let $D^{c(k)}$ be the diagram obtained from $D$ by applying a $\overline{t_2'}$ move $k$ times at $c$.
	Then we have
	\[
	\sigma(D^{c(k)})-\sigma(D) \in \{0, \mathrm{sign}(c), 2\mathrm{sign}(c)\},
	\]
	where $ \mathrm{sign} 
	\left(
    \begin{minipage}{15pt}
        \begin{picture}(15,15)
            \put(0,0){\vector(1,1){15}}
            \qbezier(15,0)(15,0)(10,5)
            \qbezier(5,10)(0,15)(0,15)
            \put(0,15){\vector(-1,1){0}}
        \end{picture}
    \end{minipage}
	\right)=1$ and $\mathrm{sign} \left(
    \begin{minipage}{15pt}
        \begin{picture}(15,15)
            \put(15,0){\vector(-1,1){15}}
            \qbezier(0,0)(0,0)(5,5)
            \qbezier(10,10)(15,15)(15,15)
            \put(15,15){\vector(1,1){0}}
        \end{picture}
    \end{minipage}
	\right)=-1$.
\end{lemma}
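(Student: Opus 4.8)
The plan is to translate the statement into a question about symmetrized Seifert forms and then to use the fact that a rank-one perturbation changes a signature by a bounded, sign-controlled amount, independently of its magnitude. This last point is what makes the bound \emph{uniform in $k$}, which is the whole content of the lemma.

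First I would normalize the sign. Passing to the mirror image reverses every crossing and negates the signature under our sign convention, and it carries a $\overline{t_2'}$ move at a positive crossing to one at a negative crossing; so it suffices to treat $\mathrm{sign}(c)=1$ and prove $\sigma(D^{c(k)})-\sigma(D)\in\{0,1,2\}$ for all $k\geq 0$, the negative case following formally. Next I would set up the Seifert data. Applying Seifert's algorithm, I would choose a Seifert surface $F$ for $D$ so that the two strands meeting at $c$ appear as the two boundary arcs of a single band $b$ of $F$. Each $\overline{t_2'}$ move at $c$ then inserts one full twist (two crossings of sign $\mathrm{sign}(c)$) into $b$, while leaving the rest of $F$, and in particular the set of Seifert circles, unchanged. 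With respect to a fixed homology basis adapted to $F$, the Seifert matrix $V_k$ of $D^{c(k)}$ thus differs from $V_0=V$ only in the single diagonal entry indexed by the core of $b$, which increases by $\mathrm{sign}(c)$ at each move, so that $V_k=V+k\,\mathrm{sign}(c)\,e_ie_i^{T}$ on that entry and $(V_k+V_k^{T})=(V+V^{T})+2k\,\mathrm{sign}(c)\,e_ie_i^{T}$.

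The core of the argument is then pure linear algebra. Since $\sigma(D^{c(k)})=\mathrm{sign}(V_k+V_k^{T})$ and $\sigma(D)=\mathrm{sign}(V+V^{T})$, the two symmetrized forms differ by the single rank-one perturbation $t\,e_ie_i^{T}$ with $t=2k\,\mathrm{sign}(c)$. By eigenvalue interlacing for a rank-one update (the numbers of positive and of negative eigenvalues each change by at most one, and the eigenvalues move monotonically in the direction of the perturbation), one has $\mathrm{sign}(A+t\,e_ie_i^{T})-\mathrm{sign}(A)\in\{0,1,2\}$ for every $t>0$, uniformly in $t$, and symmetrically for $t<0$. Taking $A=V+V^{T}$ gives $\sigma(D^{c(k)})-\sigma(D)\in\{0,\mathrm{sign}(c),2\,\mathrm{sign}(c)\}$ for all $k$. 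This is consistent with the per-move estimates: a single move relates $D^{c(k)}$ to $D^{c(k-1)}$ by one crossing change of sign $\mathrm{sign}(c)$ followed by a Reidemeister~II cancellation, so the Remark following Lemma~\ref{lem:t_2movesignature} shows $\mathrm{sign}(c)\,\sigma(D^{c(k)})$ is nondecreasing in $k$; the new information is that the cumulative change never exceeds $2$.

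The main obstacle I anticipate is precisely the geometric step in the second paragraph: arranging $F$ so that the two strands at $c$ cobound one band, and verifying that iterating the $\overline{t_2'}$ move only twists that band while producing no new Seifert circles. This is exactly what forces the total perturbation to remain rank one and hence the bound to be independent of $k$. Indeed, the signature inequalities available earlier bound only the change at each individual move, and summed over $k$ moves they would give the useless estimate $2k$; so the genuinely uniform statement cannot come from the per-move lemmas alone. I therefore expect the careful bookkeeping of the Seifert circles of $D^{c(k)}$ near $c$, and the identification of the repeated twisting with a change in a single diagonal entry of the Seifert matrix, to be the crux of the proof.
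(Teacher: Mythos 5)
Your argument is essentially correct, but it takes a genuinely different route from the paper. The paper never touches Seifert forms: it anchors both $\sigma(D)$ and $\sigma(D^{c(k)})$ to the signature of the \emph{same} smoothed diagram $D/c$, observing that smoothing one crossing of the twist region of $D^{c(k)}$ yields a diagram equivalent to $D/c$; the inequality $-1\le\sigma(D_{\pm})-\sigma(D/c)\le 1$ from the Remark following Lemma~\ref{lem:t_2movesignature} then gives $|\sigma(D^{c(k)})-\sigma(D)|\le 2$ uniformly in $k$, and monotonicity of $\sigma$ under the $k$ crossing changes relating $D^{c(k)}$ to $D$ pins down the sign. Your rank-one perturbation of the symmetrized Seifert form plays exactly the role that the common smoothing $D/c$ plays there: both devices supply the $k$-independent anchor that you correctly identify as the whole content of the lemma. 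Your approach is more self-contained and makes transparent \emph{why} the bound is $2$; the paper's is shorter because it reuses inequalities already quoted and avoids all surface bookkeeping.

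Two points in your write-up need repair, though neither is fatal. First, twisting the band $b$ does not in general change ``a single diagonal entry'': a single band is an edge, not a cycle, of the Seifert graph, so there is no basis element ``indexed by the core of $b$''. If $\mathbf{n}$ records the algebraic number of times each basis curve of $H_1(F)$ runs through $b$, the symmetrized form changes by $\pm 2k\,\mathbf{n}\mathbf{n}^{T}$, which is only semidefinite of rank $\le 1$ --- but that is all your interlacing step uses, so the conclusion stands (and if $\mathbf{n}=0$ the signature simply does not change). Second, the assertion that the Seifert circles are unchanged is false for the diagram $D^{c(k)}$ (the paper itself uses $o(D^{c(k)})=o(D)+2k$ in the proof of Lemma~\ref{thm:t_2-move-positive}); what is true, and what you actually need, is that $F$ with the twisted band is still \emph{a} Seifert surface for the new link and that the link signature may be computed from any Seifert surface. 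Finally, watch the sign conventions: with the paper's normalization ($\sigma$ of the right-handed trefoil equal to $+2$), a positive full twist makes $V+V^{T}$ more negative definite, so the signs in your displayed perturbation must be adjusted for the membership in $\{0,\mathrm{sign}(c),2\,\mathrm{sign}(c)\}$ to come out with the stated orientation.
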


\begin{proof}
We may assume that the crossing $c$ is positive, 
since we can prove in a similar way as in the case $c$ is negative.
Let $D/c$ be the diagram obtained from $D$ by smoothing $c$.
By smoothing one crossing of $D^{c(k)}$, we can obtain a diagram which is equivalent to $D/c$.
Then we obtain the inequalities.
\[
-1\leq \sigma(D^{c(k)})-\sigma(D/c) \leq 1 .
\]
Since $D$ is obtained from $D^{c(k)}$ by $k$ crossing changes from positive to negative,
\[
\sigma(D^{c(k)})-\sigma(D)\geq 0.
\]
Then we obtain
\[
0\leq \sigma(D^{c(k)})-\sigma(D) \leq 2.
\]
\end{proof}

\begin{lemma} \label{prop:t_2moveknotsignature}
	Let $D$ be a link diagram, and let $c$ be a crossing of $D$.
	Let $D/c$ be the link diagram obtained from $D$ by smoothing $c$,
	$D^{c(k)}$ the link diagram obtained from $D$ by applying a $\overline{t_2'}$ move $k$ times at $c$.
	If $\sigma(D)-\sigma(D/c)=\mathrm{sign}(c)$, then we have
	\[
	\sigma(D)=\sigma(D^{c(k)}).
	\]
\end{lemma}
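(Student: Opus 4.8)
The plan is to combine two facts about the signature change that are already on the table: the three-value containment of Lemma~\ref{prop:t_2movelinksignature}, together with the one-crossing smoothing bound that appears inside its proof. The hypothesis $\sigma(D)-\sigma(D/c)=\mathrm{sign}(c)$ pins $\sigma(D)$ to the extreme end of its allowed range relative to $\sigma(D/c)$, and this is precisely what should collapse the set $\{0,\mathrm{sign}(c),2\,\mathrm{sign}(c)\}$ down to the single value $0$.

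First I would reduce to the case $\mathrm{sign}(c)=1$, since the negative-crossing case is handled by the same argument with all inequalities reversed (equivalently, by passing to the mirror image). With $c$ positive the hypothesis reads $\sigma(D)=\sigma(D/c)+1$.

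The key recycled ingredient is the inequality $-1\leq \sigma(D^{c(k)})-\sigma(D/c)\leq 1$. Smoothing one crossing of $D^{c(k)}$ yields a diagram equivalent to $D/c$, so this is exactly the bound established in the proof of Lemma~\ref{prop:t_2movelinksignature} via the smoothing estimate $-1\leq \sigma(D_{\pm})-\sigma(D/c)\leq 1$. Its upper half gives $\sigma(D^{c(k)})\leq \sigma(D/c)+1=\sigma(D)$. On the other hand, Lemma~\ref{prop:t_2movelinksignature} itself gives $\sigma(D^{c(k)})-\sigma(D)\in\{0,1,2\}$ for a positive crossing, hence $\sigma(D^{c(k)})\geq \sigma(D)$. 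Sandwiching the two inequalities forces $\sigma(D^{c(k)})=\sigma(D)$, which is the claim.

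I do not expect a genuine obstacle here: the entire content is that the hypothesis saturates one of the two bounds supplied by the previous lemma, so that the squeeze becomes an equality. The only point requiring care is the bookkeeping of signs. One must check that for a positive crossing the smoothing bound contributes the \emph{upper} estimate $\sigma(D^{c(k)})\leq\sigma(D)$ while the $\overline{t_2'}$ bound contributes the \emph{lower} estimate, and that these two roles swap consistently when $c$ is negative, so that the squeeze closes up in both cases.
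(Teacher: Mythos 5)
Your proposal is correct and follows essentially the same route as the paper: both combine the smoothing bound $-1\leq \sigma(D^{c(k)})-\sigma(D/c)\leq 1$ (which, via the hypothesis, forces $\sigma(D^{c(k)})\leq\sigma(D)$ for a positive crossing) with the containment from Lemma~\ref{prop:t_2movelinksignature} (which forces $\sigma(D^{c(k)})\geq\sigma(D)$), and conclude by squeezing. The sign bookkeeping you flag is handled correctly.
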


\begin{proof}
We may assume that the crossing $c$ is a positive crossing,
since we can prove in a similar way as in the case $c$ is negative. Since we have 
\[
-1\leq \sigma(D^{c(k)})-\sigma(D/c) \leq 1 ,
\]
we have the following inequalities.
\[
-2\leq \sigma(D^{c(k)})-\sigma(D) \leq 0 .
\]
By Proposition~\ref{prop:t_2movelinksignature}, we have 
\[
\sigma(D)=\sigma(D^{c(k)}).
\]
\end{proof}

Yamada showed the following.
\begin{lemma}[\cite{yamada1987-Seifert-circle-braid}] \label{lem:Yamada}
    Let $L$ be a link and $D$ a diagram of $L$. Then we have
	\[
	b(L)\leq o(D).
	\]
\end{lemma}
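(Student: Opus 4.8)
The statement is Yamada's theorem, and the natural route is the Yamada--Vogel braiding algorithm: I would show that $L$ can be presented as a closed braid on exactly $o(D)$ strands, by converting $D$ into a closed-braid diagram without ever changing the number of Seifert circles. The starting point is the observation that the closure of a $k$-braid has exactly $k$ Seifert circles, since Seifert-smoothing its crossings leaves $k$ coherently oriented concentric circles. Thus, applying Seifert's algorithm to $D$ to produce its $o(D)$ oriented Seifert circles together with their half-twisted bands, it suffices to rearrange the diagram so that these circles become coherently nested while their number stays equal to $o(D)$.

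To control the process I would introduce a non-negative integer complexity measuring the failure of the braid condition --- for instance, the number of \emph{incoherent faces}, where a complementary region of the Seifert circles is incoherent if the two Seifert-circle arcs on its boundary carry opposite induced orientations. One checks that this complexity vanishes precisely when every pair of Seifert circles is nested with compatible orientation, which is exactly the combinatorial characterization of a closed-braid diagram on $o(D)$ strands. The engine of the argument is Vogel's move: at an incoherent face one performs a local finger move of Reidemeister~II type that drags one boundary arc across the face, introducing two new crossings of opposite sign. This move preserves the link type and, crucially, leaves the number of Seifert circles unchanged, while strictly decreasing the chosen complexity.

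Since the complexity is a non-negative integer, finitely many such moves yield a diagram $D'$ of $L$ with $o(D')=o(D)$ and no incoherent faces, hence a closure of a braid on $o(D')=o(D)$ strands; therefore $b(L)\le o(D)$. The main obstacle is precisely the monotonicity and termination step: defining the complexity so that each Vogel move strictly decreases it without creating new incoherent faces, and confirming that such a move is always available as long as the diagram is not yet braided. A secondary technical point is translating the purely combinatorial ``no incoherent faces'' condition into an honest system of coherently oriented concentric circles --- equivalently, organizing the nesting structure of the Seifert circles into their associated tree --- so that the final diagram is literally read off as the closure of a braid word on $o(D)$ strands. Both points are handled in the standard references (cf.~\cite{cromwell-book}).
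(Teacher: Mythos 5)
The paper does not prove this lemma; it is quoted directly from Yamada's paper \cite{yamada1987-Seifert-circle-braid}, so there is no internal argument to compare against. Your sketch is the standard and correct route to the result: the closure of a $k$-braid has exactly $k$ Seifert circles, and the Yamada--Vogel algorithm braids any diagram without changing the number of Seifert circles, whence $b(L)\le o(D)$. The one soft spot is exactly the one you flag: the naive complexity ``number of incoherent faces'' is not obviously monotone under a Vogel move (a single Reidemeister~II finger move can rearrange the face structure and create new incoherent faces), so the termination argument genuinely requires a more carefully chosen measure --- Vogel uses the number of interleaved (non-nested) pairs of Seifert circles, and Yamada's original argument is organized differently again. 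Since you defer that point to the standard references rather than asserting the wrong monotonicity as proved, the proposal is acceptable as a proof sketch, but a self-contained write-up would need to supply that complexity function and verify its strict decrease.
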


The \textit{HOMFLY polynomial} $P_L(v,z)$ of a link $L$~\cite{fre1}, \cite{prz1} is
a $2$-variable link polynomial defined by the skein relationship
\[ v^{-1} P \left(
    \begin{minipage}{15pt}
        \begin{picture}(15,15)
            \put(0,0){\vector(1,1){15}}
            \qbezier(15,0)(15,0)(10,5)
            \qbezier(5,10)(0,15)(0,15)
            \put(0,15){\vector(-1,1){0}}
        \end{picture}
    \end{minipage}
;v,z \right) -v P \left(
    \begin{minipage}{15pt}
        \begin{picture}(15,15)
            \put(15,0){\vector(-1,1){15}}
            \qbezier(0,0)(0,0)(5,5)
            \qbezier(10,10)(15,15)(15,15)
            \put(15,15){\vector(1,1){0}}
        \end{picture}
    \end{minipage}
;v,z \right) = z P \left(
    \begin{minipage}{15pt}
        \begin{picture}(15,15)
            \qbezier(0,0)(10,7.5)(0,15)
            \qbezier(15,0)(5,7.5)(15,15)
            \put(15,15){\vector(1,1){0}}
            \put(0,15){\vector(-1,1){0}}
        \end{picture}
    \end{minipage}
;v,z \right) . \]
We denote by $\mathrm{maxdeg}_v P_L$ (resp.~$\mathrm{mindeg}_v P_L$) 
the maximal (resp.~minimal) degree of $P_L(v,z)$ in the variable $v$. 
Franks and Williams, and Morton independently showed the following.
\begin{lemma}[\cite{fra1}, \cite{mor1}] \label{lemma:Franks and William}
	Let $L$ be a link. Then we have
	\[
	\frac{1}{2} \bigl( \mathrm{maxdeg}_v P_L-\mathrm{mindeg}_v P_L \bigr) +1\leq b(L).
	\]
\end{lemma}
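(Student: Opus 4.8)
The plan is to deduce the bound from the two-sided \emph{Morton inequality} relating the $v$-span of the HOMFLY polynomial to the writhe and the number of Seifert circles of any diagram. Writing $w(D)$ for the writhe (the sum of the signs of the crossings) and $o(D)$ for the number of Seifert circles of a diagram $D$ of $L$, I would first establish
\[
w(D)-o(D)+1 \le \mathrm{mindeg}_v P_L \le \mathrm{maxdeg}_v P_L \le w(D)+o(D)-1 .
\]
Granting this, the lemma is immediate: choose a braid $\beta$ on $b(L)$ strands with $\widehat{\beta}=L$, and let $D$ be its closed braid diagram. Since every crossing of a closed braid is coherently oriented, Seifert's algorithm returns exactly the $b(L)$ braid strands, so $o(D)=b(L)$. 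Subtracting the two outer inequalities, the writhe cancels and
\[
\mathrm{maxdeg}_v P_L-\mathrm{mindeg}_v P_L \le \bigl(w(D)+o(D)-1\bigr)-\bigl(w(D)-o(D)+1\bigr)=2\bigl(b(L)-1\bigr),
\]
which is exactly the asserted inequality after dividing by $2$ and adding $1$.

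To prove the Morton inequality I would induct on the number of crossings $c(D)$, using the skein relation rewritten as $P(L_+)=v^2 P(L_-)+vz\,P(L_0)$ (and its mirror for a negative crossing), where $L_+,L_-,L_0$ denote the diagrams agreeing away from one crossing and carrying there a positive crossing, a negative crossing, and the oriented smoothing, respectively. The two structural facts that make the bookkeeping close up are: (i) the oriented smoothing $L_0$ is precisely a Seifert smoothing, so $o(L_0)=o(D)$ while $c(L_0)=c(D)-1$ and $w(L_0)=w(D)\mp 1$; and (ii) a crossing change preserves $o$ and shifts $w$ by $\mp 2$. Fact (i) makes the $vz\,P(L_0)$ term land exactly on the target bound by the inductive hypothesis applied to the smaller diagram $L_0$, and fact (ii) makes the $v^2P(L_-)$ term land on the bound \emph{provided} the bound is already known for $L_-$.

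The hard part will be the term $P(L_-)$: a crossing change does not reduce $c(D)$, so the induction cannot be a single pass on the crossing number. I would resolve this with a secondary induction via the \emph{descending diagram} device: order the components and choose base points, and repeatedly apply the skein relation at each crossing that violates the descending order, so that the main term moves one step closer to a descending diagram while each error term is a smoothing $L_0$ with strictly fewer crossings, handled by the primary induction. The secondary induction terminates at a descending diagram, which represents an unlink of some $k$ components with $P=\bigl((v^{-1}-v)/z\bigr)^{k-1}$, whose $v$-degrees are exactly $\pm(k-1)$; a short auxiliary induction removing nugatory crossings checks that these meet the bounds. Keeping the $w$- and $o$-tracking aligned through the $v^{\pm2}$ (crossing change) and $v^{\pm1}z$ (smoothing) contributions so that \emph{both} one-sided bounds stay exactly on the nose is the delicate point of the argument.
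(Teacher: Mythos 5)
The paper itself does not prove this lemma---it is quoted from Franks--Williams \cite{fra1} and Morton \cite{mor1}---so your proposal has to be measured against those sources. Your global architecture is exactly Morton's: establish, for \emph{every} diagram $D$ of $L$, the two-sided bound
$w(D)-o(D)+1 \le \mathrm{mindeg}_v P_L \le \mathrm{maxdeg}_v P_L \le w(D)+o(D)-1$,
then evaluate it on a closed braid diagram on $b(L)$ strands, where $o(D)=b(L)$ because all crossings of a closed braid are coherently oriented; the writhe cancels and the stated inequality follows. That reduction is correct, and your skein bookkeeping for the inductive step is also correct: since the oriented smoothing is a Seifert smoothing, $o(L_0)=o(D)$, $w(L_0)=w(D)\mp 1$, $c(L_0)=c(D)-1$, while a crossing change keeps $o$ and shifts $w$ by $\mp 2$, so both terms of $P(L_+)=v^2P(L_-)+vz\,P(L_0)$ (and of its mirror) land exactly on the target bounds, as you say.

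The genuine gap is the base case of your secondary induction. When the diagram $D$ is descending it represents a $k$-component unlink and $P=\bigl((v^{-1}-v)/z\bigr)^{k-1}$, so what must be verified is the purely diagrammatic inequality $k-1\le w(D)+o(D)-1$ together with $w(D)-o(D)+1\le -(k-1)$, i.e.\ $|w(D)|\le o(D)-k$, for the descending diagram $D$ itself, with its actual writhe and Seifert circle count. Your proposed device---``a short auxiliary induction removing nugatory crossings''---cannot do this: descending diagrams in general have \emph{no} nugatory crossings at all. For instance, take the reduced figure-eight knot shadow, choose a basepoint, and put every first visit on top: the result is a $4$-crossing descending diagram of the unknot with no nugatory crossing, so your auxiliary induction has no move available, yet the inequality $|w|\le o-1$ still has to be proved for that diagram (it is a weak form of the Bennequin inequality for the unknot, not a triviality). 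The standard repair, which is the real content of the base case in Morton's argument (see also Lickorish's book, the proof of the analogous diagram bound), is to show that a descending diagram admits a \emph{monotone} simplification to the crossingless diagram---each maximal over-arc is embedded and lies over everything, so it can be pulled off the rest of the diagram by Reidemeister III moves and crossing-removing Reidemeister II and I moves---and to check that every such move leaves $w+o$ non-increasing and $w-o$ non-decreasing (type III preserves both $w$ and $o$; a parallel type II removal preserves both; an antiparallel type II removal drops $o$ by one; a type I removal drops $o$ by one and shifts $w$ by $\pm 1$). The sharp equalities at the crossingless unlink diagram then propagate back to $D$. Without this, or some equivalent treatment of descending diagrams, your induction does not close.
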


For a positive link, Yokota showed the following. 
\begin{lemma}[\cite{yokota-1992-polynomial-inv}] \label{lem:positivehomfly}
	Let $D$ be a positive diagram of a positive link $L$. Then we have
	\[
	\mathrm{mindeg}_v P_L=c(D)-o(D)+1.
	\]
\end{lemma}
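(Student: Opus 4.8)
The plan is to establish the two inequalities $\mathrm{mindeg}_v P_L \ge c(D)-o(D)+1$ and $\mathrm{mindeg}_v P_L \le c(D)-o(D)+1$ separately. For the first I would invoke the Morton--Franks--Williams inequality in its original degree form (the content of \cite{fra1}, \cite{mor1} that underlies Lemma~\ref{lemma:Franks and William}), namely $\mathrm{mindeg}_v P_L \ge w(D)-o(D)+1$, where $w(D)$ denotes the writhe. Since $D$ is a positive diagram, every crossing contributes $+1$ to the writhe, so $w(D)=c(D)$ and the lower bound $\mathrm{mindeg}_v P_L \ge c(D)-o(D)+1$ is immediate. The whole task is therefore to exhibit a nonzero term of $v$-degree exactly $c(D)-o(D)+1$.

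For the matching upper bound I would induct on $c(D)$ using the defining skein relation, rewritten as
\[
P_L(v,z)=vz\,P_{D_0}(v,z)+v^2\,P_{D_-}(v,z),
\]
where $D_0$ is the oriented (Seifert) smoothing of $D$ at a chosen crossing and $D_-$ is $D$ with that crossing switched from positive to negative. The bookkeeping rests on two observations: the Seifert state of $D_0$ coincides with that of $D$, so $o(D_0)=o(D)$ while $c(D_0)=c(D)-1$ and $D_0$ is again positive; and switching a crossing does not alter the Seifert smoothing, so $o(D_-)=o(D)$ and $c(D_-)=c(D)$, but $w(D_-)=c(D)-2$. The base case $c(D)=0$ is the $o(D)$-component unlink, whose HOMFLY polynomial $\bigl((v^{-1}-v)/z\bigr)^{o(D)-1}$ has minimal $v$-degree $1-o(D)=c(D)-o(D)+1$ with unit coefficient. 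By the inductive hypothesis applied to the positive diagram $D_0$, the coefficient of $v^{c(D)-o(D)}$ in $P_{D_0}$ is nonzero, so the summand $vz\,P_{D_0}$ supplies a nonzero term in $v$-degree $c(D)-o(D)+1$.

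The main obstacle is the second summand $v^2 P_{D_-}$. Applying Morton--Franks--Williams to $D_-$ gives only $\mathrm{mindeg}_v P_{D_-}\ge w(D_-)-o(D_-)+1 = c(D)-o(D)-1$, so $v^2 P_{D_-}$ may carry a term of $v$-degree exactly $c(D)-o(D)+1$, which could a priori cancel the contribution from $D_0$. This genuinely occurs term-by-term: for the positive Hopf diagram $\widehat{\sigma_1^2}$ the switched diagram is the two-component unlink and already contributes at the critical degree, and yet the two contributions reinforce rather than cancel. Ruling out such cancellation is the heart of the argument. I would resolve it by strengthening the inductive hypothesis to assert that, for a positive diagram, the coefficient of $v^{c(D)-o(D)+1}$ in $P_D$ is a nonzero Laurent polynomial in $z$ with non-negative integer coefficients; granting this for $D_0$, its contribution $z\,\cdot(\text{coeff.\ of }v^{c(D)-o(D)}\text{ in }P_{D_0})$ again has non-negative coefficients, and once the $D_-$-contribution is shown to be non-negative as well the two cannot cancel, so the critical coefficient is nonzero. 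Combining this with the lower bound yields $\mathrm{mindeg}_v P_L=c(D)-o(D)+1$.

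I expect the sign control for the switched diagram to be the genuinely delicate point, precisely because $D_-$ is not positive and so is not covered by the positive-diagram induction. Handling it cleanly requires phrasing the induction over a class of diagrams broad enough to contain the switched diagrams arising in the recursion while still detecting the required positivity of the extremal coefficient. An alternative route to the same end would be to study the \emph{top} $z$-degree corner: using Morton's companion bound $\mathrm{maxdeg}_z P_D \le c(D)-o(D)+1$ \cite{mor1}, it suffices to show that the coefficient of the monomial $v^{c(D)-o(D)+1}z^{c(D)-o(D)+1}$ is a unit, and one would again verify this non-cancellation along the skein recursion with the unlink corner value $\pm1$ as base case. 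Either way, the non-cancellation of the extremal coefficient for positive diagrams is the crux.
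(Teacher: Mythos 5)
This lemma is not proved in the paper at all: it is quoted as Yokota's theorem \cite{yokota-1992-polynomial-inv}, so your attempt can only be judged against the statement itself, and as a proof it has a genuine gap. The lower bound $\mathrm{mindeg}_v P_L \ge c(D)-o(D)+1$ via Morton's inequality $\mathrm{mindeg}_v P_L \ge w(D)-o(D)+1$ together with $w(D)=c(D)$ for positive diagrams is correct and complete, and your skein setup $P_{D}=vz\,P_{D_0}+v^{2}P_{D_-}$, with the degree bookkeeping for $D_0$ and $D_-$, is also correct. But the entire content of the theorem is the step you yourself label ``the heart of the argument'' and then leave open: showing that $v^{2}P_{D_-}$ cannot cancel the contribution of $vz\,P_{D_0}$ at $v$-degree $c(D)-o(D)+1$. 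You propose to deduce this from non-negativity of the extremal $v$-coefficient, but you establish that non-negativity neither for $D_-$ nor for any class of diagrams containing it; you explicitly defer the choice of the induction class. What is written is therefore a reduction of Yokota's theorem to an unproved positivity statement, not a proof.

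The deferred step is not a formality that routine care would fill in. Any induction controlling the $D_-$ terms must recurse on them too, and iterating the $D_-$ branch produces diagrams with arbitrarily many negative crossings; on such diagrams the proposed strengthening is simply false. For the standard diagram of the left-handed trefoil ($c=3$, $s=2$, $w=-3$) one has
\[
P \;=\; 2v^{-2}+v^{-2}z^{2}-v^{-4},
\]
so the coefficient at the extremal degree $w-s+1=-4$ equals $-1$: the Morton bound is sharp there, yet the extremal coefficient is negative. Hence the induction class must be chosen with genuine care --- this is exactly where Yokota (and Cromwell, in the homogeneous setting) do the real work, by controlling the top $z$-degree coefficient rather than the sign of the bottom $v$-coefficient. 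Your alternative ``corner coefficient'' route inherits the same unresolved issue: it requires knowing the coefficient of $v^{w-s+1}z^{c-s+1}$ in $P_{D_-}$ for the switched diagrams, which is again not established. As it stands, the crux of the lemma remains unproven.
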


Nakamura showed the following.
\begin{lemma} [\cite{nakamura-2004-note-on-braid-index}]\label{lem:positiveMWF}
	Let $L$ be a closed positive $3$-braid. Then we have
	\[
	\frac{1}{2} \bigl( \mathrm{maxdeg}_v P_L-\mathrm{mindeg}_v P_L \bigr) +1= b(L),
	\]
\end{lemma}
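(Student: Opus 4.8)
The plan is to prove that the Morton--Franks--Williams bound of Lemma~\ref{lemma:Franks and William} is \emph{sharp} for $L$, by locating both ends of the $v$-span of $P_L$. Fix a positive $3$-braid $\beta$ with $\widehat\beta=L$, written up to conjugacy as $\prod_{i=1}^{r}\sigma_1^{p_i}\sigma_2^{q_i}$ with $p_i,q_i\ge1$, and let $D$ be its closed-braid diagram; then $D$ is a positive diagram with $o(D)=3$ and writhe $w(D)=c(D)=:c$. Yamada's inequality (Lemma~\ref{lem:Yamada}) gives $b(L)\le o(D)=3$, and Lemma~\ref{lemma:Franks and William} gives
\[
E(L):=\tfrac12\bigl(\mathrm{maxdeg}_v P_L-\mathrm{mindeg}_v P_L\bigr)+1\le b(L)\le3 .
\]
Hence it suffices to establish the reverse inequality $E(L)\ge b(L)$ in every case.

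First I would pin down the two ends of the $v$-span as far as the general theory allows. The Morton degree inequalities (from the same source as Lemma~\ref{lemma:Franks and William}) give $c-2\le\mathrm{mindeg}_v P_L\le\mathrm{maxdeg}_v P_L\le c+2$, and since $D$ is positive, Yokota's formula (Lemma~\ref{lem:positivehomfly}) forces the lower end $\mathrm{mindeg}_v P_L=c-o(D)+1=c-2$. Because the $v$-breadth of a HOMFLY polynomial is always even, $\mathrm{maxdeg}_v P_L\in\{c-2,\,c,\,c+2\}$, and then $E(L)=\tfrac12(\mathrm{maxdeg}_v P_L-c+2)+1$; so everything reduces to locating this top degree.

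Next I would split into cases according to how often each generator occurs. If $r=1$ with $p_1=1$ or $q_1=1$, then one Markov destabilization turns $\widehat\beta$ into a closed $2$-braid, so $L$ is a $(2,k)$-torus link or the unknot; for these $P_L$ is classical and $E(L)=b(L)\in\{1,2\}$ follows at once. If $\beta$ involves a single generator, then $L$ is a split union of a $(2,k)$-torus link and an unknot, so $P_L$ is the torus-link polynomial times $(v^{-1}-v)/z$; here the $v$-breadth is $4$, giving $E(L)=3$, whence $E(L)\le b(L)\le3$ forces $b(L)=3=E(L)$. In the remaining \emph{generic} case every generator occurs at least twice (exactly when $r\ge2$, or $r=1$ with $p_1,q_1\ge2$), and the task becomes to prove $\mathrm{maxdeg}_v P_L=c+2$; with $\mathrm{mindeg}_v P_L=c-2$ this yields $E(L)=3$, and again $E(L)\le b(L)\le3$ forces $b(L)=3=E(L)$, so the braid index need never be computed directly.

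The hard part is this last step: the non-vanishing of the coefficient of $v^{c+2}$ in $P_L$. I would induct on $c=\sum_i(p_i+q_i)$ using the HOMFLY skein relation at one positive crossing inside a syllable $\sigma_i^{p}$ with $p\ge2$, which rearranges to
\[
P(\cdots\sigma_i^{p}\cdots)=vz\,P(\cdots\sigma_i^{p-1}\cdots)+v^{2}\,P(\cdots\sigma_i^{p-2}\cdots).
\]
Both reduced words are again positive $3$-braids, so the induction stays inside the class under study. The extra factor $z$ in the first term, together with the fact that the oriented smoothing $\sigma_i^{p-1}$ changes the number of components and thereby shifts the $z$-degrees, is what prevents the two right-hand contributions to $v$-degree $c+2$ from cancelling: tracking the leading $v$-coefficient as a Laurent polynomial in $z$, the extreme term is inherited, essentially from the $\sigma_i^{p-2}$ reduction alone. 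The genuine obstacle is the bookkeeping at the boundary of the induction---exponents equal to $2$ or $3$ and small $r$, where a reduction may Markov-destabilize and lower the $v$-degree---which must be organized so that the extreme coefficient is always inherited from a reduction that is still of braid index $3$ (or a split union, still of $v$-breadth $4$), with $\widehat{\sigma_1^2\sigma_2^2}$ and the degenerate links above as base cases. Should this recursion prove unwieldy, an alternative is to read the extreme $v$-degrees off the Ocneanu-trace expression of $P_L$ on the Hecke algebra $H_3$, whose irreducible summands have dimensions $1,2,1$: the extreme degree then comes from the one-dimensional summands as a single monomial, and one need only check that it is not cancelled by the two-dimensional summand.
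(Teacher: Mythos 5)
The paper offers no proof of this lemma: it is quoted verbatim from Nakamura \cite{nakamura-2004-note-on-braid-index}, so there is no in-paper argument to measure your proposal against, and it must stand on its own. Your setup is sound as far as it goes --- Yokota pins $\mathrm{mindeg}_v P_L$ at $c-2$, Morton's inequality caps $\mathrm{maxdeg}_v P_L$ at $c+2$, the $v$-breadth is even, and the chain $E(L)\le b(L)\le 3$ correctly reduces everything to locating $\mathrm{maxdeg}_v P_L$. But the reduction then breaks. Your ``generic'' case (every generator occurring at least twice in the chosen word) is not conjugation- and braid-relation-invariant, and the claim that $\mathrm{maxdeg}_v P_L=c+2$ there is false: $(\sigma_1\sigma_2)^2$ has each generator occurring twice, yet $\sigma_1\sigma_2\sigma_1\sigma_2=\sigma_2\sigma_1\sigma_2^2$ is conjugate to a word with a single occurrence of one generator, its closure is the trefoil $T_{2,3}$ with $b=2$, and $P=2v^2-v^4+v^2z^2$ has $\mathrm{maxdeg}_v P=c=4$, not $c+2=6$. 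The same happens for an infinite family such as $\sigma_1\sigma_2\sigma_1\sigma_2^{2k+1}$, whose closures are $(2,2k+3)$-torus knots. Deciding exactly when the hidden destabilization occurs is the substance of Nakamura's theorem, and your case division simply misses it.

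The second gap is the non-cancellation step itself. In the recursion $P(\cdots\sigma_i^{p}\cdots)=v^{2}P(\cdots\sigma_i^{p-2}\cdots)+vz\,P(\cdots\sigma_i^{p-1}\cdots)$, the mechanism you invoke --- that the factor $z$ together with the change in component number ``shifts the $z$-degrees'' and prevents cancellation --- does not work: the $z$-exponents of a $\mu$-component link are congruent to $\mu-1$ modulo $2$, smoothing changes $\mu$ by one, and the extra factor of $z$ restores the parity, so the two summands contribute to the coefficient of $v^{c+2}$ in the \emph{same} $z$-degrees and can in principle cancel. The trefoil example shows this coefficient genuinely does vanish for some positive $3$-braid words, so any correct argument must detect precisely when the cancellation happens rather than rule it out wholesale. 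Your fallback via the Ocneanu trace on the Hecke algebra $H_3$ is a legitimate route (one must check that the two-dimensional summand does not kill the monomial coming from the one-dimensional ones, which is where the exceptional words above show up), but as written it is only a pointer, not a proof.
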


\begin{lemma} \label{thm:t_2-move-positive}
	Let $\beta $ be a positive $3$-braid of the form
	\[
	\prod_{i=1}^{r} \sigma_1^{p_i} \sigma_2^{q_i},
	\]
	such that $\widehat{\beta}$ is a knot and $p_i,q_i\geq 3$ for $i=1,2,\dots ,r$. 
	Let $D$ be the closed braid diagram of $\widehat{\beta }$, and let $c$ be a crossing of $D$.
	Let $D^{c(k)}$ be the diagram obtained from $D$ by applying a $\overline{t'_2}$ move $k$ times at $c$,
	and $K'$ the knot represented by $D^{c(k)}$.
	Then we have
	\begin{eqnarray*}
	(1) & & \mathrm{alt}(K')=\mathrm{dalt}(K')=r-1,\\
	(2) & & b(K')=k+3.
	\end{eqnarray*}
\end{lemma}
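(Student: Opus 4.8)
The plan is to prove the two parts separately, in each case sandwiching the quantity between a matching upper and lower bound, and to reduce everything to the invariants of the original closed positive $3$-braid $\widehat{\beta}$ together with the local effect of the $\overline{t'_2}$ move. Write $\Sigma=\sum_{i=1}^r(p_i+q_i)=c(D)$, so $o(D)=3$. Since the move is performed along the orientation at a positive crossing of a positive braid diagram, $D^{c(k)}$ is again a positive diagram of a knot $K'$; from the local picture of the move I read off the two combinatorial facts that each application adds one crossing and one Seifert circle, so that $c(D^{c(k)})=\Sigma+k$ and $o(D^{c(k)})=k+3$ (in particular $c-o$ is preserved). These are the only places where the explicit definition of the move enters part~(1), and the value $o(D^{c(k)})=k+3$ is what makes the clean answer $b(K')=k+3$ possible.

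For (1), the bound $\mathrm{dalt}(K')\le r-1$ comes from exhibiting an $(r-1)$-almost alternating diagram: the move inserts a twist region, which is alternating, so the $r-1$ crossing changes that dealternate $\widehat{\beta}$ (Lemma~\ref{lem:daltupboundpositive3}, Figure~\ref{fig:Figure2}) still dealternate $D^{c(k)}$; since $\mathrm{alt}\le\mathrm{dalt}$, this caps both. It then suffices to prove $\mathrm{alt}(K')\ge r-1$ via Lemma~\ref{lem:altlowbound}. Lemma~\ref{lem:positiverasmussen} applied to the positive diagram $D^{c(k)}$ gives $s(K')=c(D^{c(k)})-o(D^{c(k)})+1=\Sigma-2$. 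For the signature I invoke Lemma~\ref{prop:t_2moveknotsignature}, and this is exactly where the hypothesis $p_i,q_i\ge 3$ does its work: the oriented smoothing $D/c$ is the closure of $\prod\sigma_1^{p_i}\sigma_2^{q_i}$ with one exponent lowered by $1$, so all its exponents are still $\ge 2$, and the computation of Lemma~\ref{lem:3-braid-signature2} (which only uses Erle's formula, valid for links) gives $\sigma(D/c)=\sigma(\widehat{\beta})-1$. Hence $\sigma(D)-\sigma(D/c)=1=\mathrm{sign}(c)$ for every crossing $c$, so Lemma~\ref{prop:t_2moveknotsignature} yields $\sigma(K')=\sigma(\widehat{\beta})=\Sigma-2r$ (Lemma~\ref{lem:3-braid-signature2} with $n=0$). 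Therefore $\tfrac12|s(K')-\sigma(K')|=\tfrac12|(\Sigma-2)-(\Sigma-2r)|=r-1\le\mathrm{alt}(K')$, which forces $\mathrm{alt}(K')=\mathrm{dalt}(K')=r-1$.

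For (2), the upper bound $b(K')\le k+3$ is immediate from Lemma~\ref{lem:Yamada} and $o(D^{c(k)})=k+3$. The lower bound will come from Lemma~\ref{lemma:Franks and William}, for which I pin down both $v$-degrees of $P_{K'}$. Yokota's Lemma~\ref{lem:positivehomfly} gives $\mathrm{mindeg}_v P_{K'}=c(D^{c(k)})-o(D^{c(k)})+1=\Sigma-2$, independent of $k$. I compute the maximal degree by induction on $k$ using the HOMFLY skein relation at the crossing introduced by the last application of the move: one resolution undoes that application (the diagram after $k-1$ moves) and the other is the fixed smoothing $D/c$, giving a recursion of the shape $P_{K'}=v^2P_{k-1}+vz\,P_{D/c}$. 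The base case is $\widehat{\beta}$, where Nakamura's Lemma~\ref{lem:positiveMWF} together with $b(\widehat{\beta})=3$ makes the inequality sharp and gives $\mathrm{maxdeg}_v P_{\widehat{\beta}}=\Sigma+2$; and $D/c$, being the closure of a positive $3$-braid, has $b(D/c)\le 3$ (Lemma~\ref{lem:Yamada}) and $\mathrm{mindeg}_v P_{D/c}=\Sigma-3$ (Lemma~\ref{lem:positivehomfly}), whence Lemma~\ref{lemma:Franks and William} forces $\mathrm{maxdeg}_v P_{D/c}\le\Sigma+1$.

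The main obstacle is precisely this maximal-degree computation: I must justify the recursion from the local picture of the move and, crucially, rule out cancellation of the leading $v$-term. The bound $\mathrm{maxdeg}_v P_{D/c}\le\Sigma+1$ does this, since it guarantees that $v^2P_{k-1}$ strictly dominates $vz\,P_{D/c}$ in $v$-degree at every step (as $\mathrm{maxdeg}_v P_{k-1}\ge\Sigma+2>\Sigma+1\ge\mathrm{maxdeg}_v P_{D/c}$), so the leading coefficient of $P_{K'}$ equals that of $P_{k-1}$ and hence survives. By induction $\mathrm{maxdeg}_v P_{K'}=\Sigma+2+2k$, and then $\tfrac12(\mathrm{maxdeg}_v P_{K'}-\mathrm{mindeg}_v P_{K'})+1=\tfrac12(2k+4)+1=k+3\le b(K')$. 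Combined with the Yamada bound this gives $b(K')=k+3$, completing the proof.
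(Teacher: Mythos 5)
Your overall architecture coincides with the paper's: for part (1), the signature is controlled via the smoothing $D/c$ and Lemma~\ref{prop:t_2moveknotsignature} while the $s$-invariant is read off the positive diagram, and for part (2) Yamada's bound is played against the Morton--Franks--Williams bound fed by the skein recursion. The one genuine gap is your combinatorial reading of the $\overline{t'_2}$ move. Each application adds \emph{two} positive crossings and \emph{two} Seifert circles, not one of each: the paper records $o(D^{c(k)})=2k+3$, and this is also forced by your own recursion, since the factor $v^{2}$ per application means the writhe of the positive diagram grows by $2$ per move, and a single crossing change in an antiparallel twist region of $m$ like-signed crossings leaves $m-2$ crossings after the Reidemeister~II cancellation, so ``one crossing per application'' would relate $D^{c(k)}$ to $D^{c(k-2)}$ rather than to $D^{c(k-1)}$. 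The miscount is harmless wherever only $c-o$ enters (both $s(K')=\Sigma-2$ and $\mathrm{mindeg}_v P_{K'}=\Sigma-2$ come out the same either way), but it is fatal to your upper bound in part (2): Yamada's inequality applied to $D^{c(k)}$ itself only yields $b(K')\le 2k+3$. The paper closes this by first deforming $D^{c(k)}$ by Reidemeister moves into a diagram $D'$ with $o(D')=k+3$ (Figure~\ref{fig:Figure6}) and only then applying Lemma~\ref{lem:Yamada}; that isotopy is the step missing from your argument.

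The rest is sound and follows the paper, with two small legitimate variations: you obtain $\mathrm{mindeg}_v P_{K'}$ by applying Yokota's lemma directly to the positive diagram $D^{c(k)}$ rather than through the skein expansion, and you bound $\mathrm{maxdeg}_v P_{D/c}\le\Sigma+1$ by combining Yamada with Morton--Franks--Williams where the paper invokes Nakamura's equality. Your explicit verification that the leading $v$-term cannot cancel in the recursion is in fact more careful than the paper's treatment of the same point.
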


\begin{proof}
(1) 
We show that $\sigma(D)=\sigma(D^{c(k)})$.
We obtain the $2$-component link diagram $D/c$ by smoothing $c$.
Then $D/c$ is a closed positive $3$-braid diagram of the form
\[
\prod_{i=1}^{r} \sigma_1^{p'_i} \sigma_2^{q'_i},
\]
such that $p'_i,q'_i\geq 2$ for $i=1,2,\dots ,r$. 
By Lemma~\ref{lem:3-braid-signature2}, we have
\[
\sigma(D)-\sigma(D/c)=1.
\]
By using Proposition~\ref{prop:t_2moveknotsignature}, we have $\sigma(D)=\sigma(D^{c(k)})$.
On the other hands, by Lemma~\ref{lem:positiverasmussen}, $\overline{t'_2}$ moves preserve 
the Rasmussen $s$-invariant for a positive knot diagram.
By Lemma~\ref{lem:altlowbound}, we have $\mathrm{alt}(K')\geq r-1$.
We can deform $D^{c(k)}$ into an $(r-1)$-almost alternating diagram as in Figure~\ref{fig:Figure2}. 
Then we have 
\[
\mathrm{alt}(K')=\mathrm{dalt}(K')=r-1.
\]
(2) First we show that $b(K')\leq k+3$. We note that $o(D)=3$ and $o(D^{c(k)})=2k+3$. 
We can deform $D^{c(k)}$ into a knot diagram $D'$ with $o(D')=k+3$ 
by the Reidemeister moves as in Figure~\ref{fig:Figure6}. By Lemma~\ref{lem:Yamada}, 
we have  $b(K')\leq o(D')=k+3$.
\begin{figure}[htbp]
\begin{center}
    \includegraphics[scale=0.55]{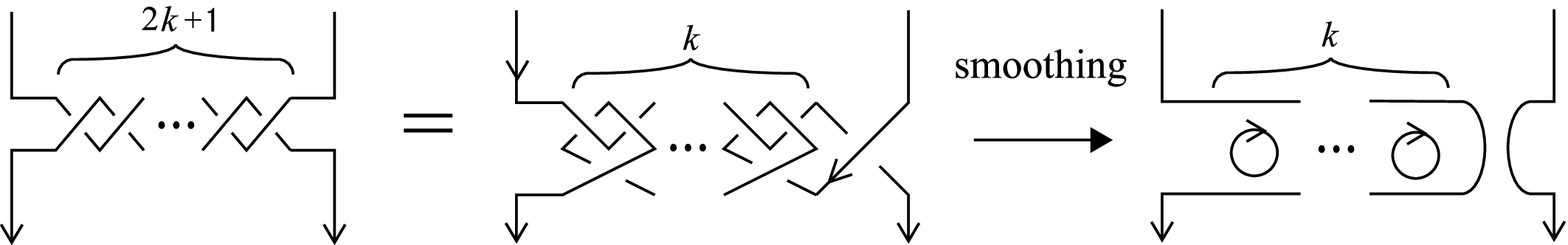}
  \end{center}
  \caption{}
  \label{fig:Figure6}
\end{figure}

Next we show that $b(K')\geq k+3$.
Let $D/c$ be the diagram obtained from $D$ by smoothing $c$.
By using the skein relationship $k$ times, we have the following equation.
\[
P_{D^{c(k)}}(v,z)= v^{2k} P_D(v,z)+v z \Bigg( \sum_{i=1}^{k} v^{2(i-1)} \Bigg) P_{D/c}(v,z).
\]
By Lemma~\ref{lem:positivehomfly}, we have
\[
\mathrm{mindeg}_v P_D = \mathrm{mindeg}_v P_{D/c} +1 =p+q-2.
\]
By Lemma~\ref{lem:positiveMWF}, we have
\[
\mathrm{maxdeg}_v P_D = \mathrm{maxdeg}_v P_{D/c} +1 =p+q+2,
\]
since the braid indices of $D$ and of $D/c$ are equal to three. 
By Lemma~\ref{lemma:Franks and William}, we have
\begin{eqnarray*}
b(K') &\geq & \frac{1}{2} \Bigl( \mathrm{maxdeg}_v P_{D^{c(k)}}-\mathrm{mindeg}_v P_{D^{c(k)}} \Bigr) +1 \\
      & =   & \frac{1}{2} \Big( (p+q+2k+2)-(p+q-2) \Big) +1=k+3.
\end{eqnarray*}
This completes the proof.
\end{proof}

\begin{proof}[Proof of Theorem~\ref{coro:braidaltexist}]
	We give an example of a family of infinitely many positive knots.
	Let $l\in \mathbb{Z}_{\geq 0}$, $m\in \mathbb{Z}_{\geq 0}$ and $n\geq 3$.
	Let $D_i$ ($i=1,2,\dots ,l$) be the closed braid diagram of a positive braid of the form
	\[
	\sigma_1^{2i+1} \sigma_2^3  ( \sigma_1^4 \sigma_2^4 ) ^m .
	\]
	Let $c$ be a crossing of $D_i$ and $D_i^{c(n)}$ the diagram obtained from $D_i$
	by applying a $\overline{t'_2}$ move $n$ times at $c$.
	By Lemma~\ref{thm:t_2-move-positive}, $\mathrm{alt} (K_i^n)=\mathrm{dalt} (K_i^n)=m$ and $b(K_i^n)=n+3$, 
	where $K_i^n$ is a knot which has the diagram $D_i^{c(n)}$.
	On the other hands, by Lemma~\ref{lem:positiverasmussen}, we have $s(K_i^n)=8m+2i+2$.
	Then we have $s(K_i^n)\ne s(K_j^n)$ for $i\ne j$. \\
	This completes the proof.
 \hfill{} {\setlength{\fboxsep}{2.5pt}\fbox{}}
\end{proof}

\section{On the Turaev genus of a non-split link}
\label{Note on the Turaev surface}
In this section, we recall the definition of the Turaev surface and some properties.

An \textit{A-splice} and a \textit{B-splice} are local moves on  a link diagram as in Figure~\ref{fig:splice}.
We denote by $s_A D$ (resp.~$s_B D$) the diagram obtained from $D$ by applying
A-splices (resp.~B-splices) at all crossings of $D$. We denote by $|s_A D|$ (resp.~$|s_B D|$)
the number of the  circles of $s_A D$ (resp.~$s_B D$).
\begin{figure}[htbp]
	  \begin{center}
		\includegraphics[scale=0.8]{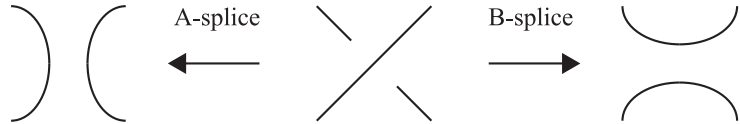}
	  \end{center}
	  \caption{An A-splice and a B-splice}
	  \label{fig:splice}
\end{figure}

We construct the Turaev surface associated to a connected link diagram.	
Let $\Gamma \subset S^{2}$ be the underlying 4-valent graph of a connected link diagram $D$,
and $V$ the union of the vertices of $\Gamma$.
Let $\Gamma \times [-1,1]$ be a surface with
singularities $V \times [-1,1]$ naturally embedded in $S^2 \times  [-1,1]$.
Replace the neighborhoods of $V \times [-1,1]$ with saddle surfaces positioned in such a way that 
the boundary curves in $S^2 \times \{1\}$ (resp.~$S^2 \times \{-1\}$) 
correspond to $s_A D$ (resp.~$s_B D$) as in Figure~\ref{fig:saddle} (see also Figure~\ref{fig:Turaev}).
\begin{figure}[htbp]
	  \begin{center}
		\includegraphics[scale=0.5]{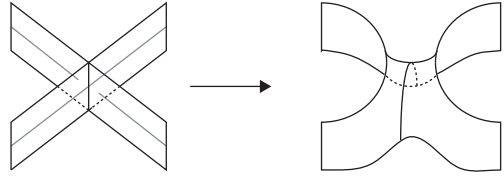}
	  \end{center}
	  \caption{A saddle surface}
	  \label{fig:saddle}
\end{figure}

\begin{figure}[htbp]
	  \begin{center}
		\includegraphics[scale=0.5]{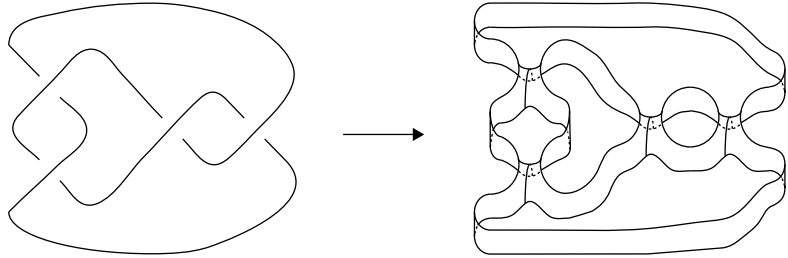}
	  \end{center}
	  \caption{On the construction of the Turaev surface}
	  \label{fig:Turaev}
\end{figure}

The Turaev surface is completed by attaching disjoint discs to the $|s_A D|+|s_B D|$
boundary circles in $S ^3$.
We denote by $g_{T}(D)$ the genus of the Turaev surface associated to $D$.
The \textit{Turaev genus} $g_{T}(L)$ of a non-split link $L$ 
is the minimal number of the genera $g_{T}(D)$ of the Turaev surfaces associated to diagrams $D$ of $L$.  
Turaev showed the following two lemmas.
\begin{lemma}[\cite{cromwell-book}, \cite{turaev-surface-1987}]
Let $D$ be a connected link diagram. Then we have
\[
g_{T}(D)=
\frac{1}{2} (c(D)+2 -|s_A D|-|s_B D|).
\]
\end{lemma}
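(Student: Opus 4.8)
The plan is to compute the Euler characteristic of the Turaev surface $F$ associated to $D$ and then read off its genus from the relation $\chi(F)=2-2g_{T}(D)$. This relation is valid once we know $F$ is a closed, connected, orientable surface. Connectedness follows from the hypothesis that $D$ is connected. Orientability is automatic here: $F$ is a closed surface embedded in $S^3$, and every closed surface embedded in $S^3$ is orientable, since no closed non-orientable surface embeds in $S^3$. Thus the whole problem reduces to counting the vertices, edges, and faces of a convenient cell decomposition of $F$.

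The decomposition I would use comes directly from the construction. Let $\Gamma\subset S^2$ be the underlying $4$-valent graph of $D$ with vertex set $V$. By construction $\Gamma$ sits on $F$ as the image of the central level $\Gamma\times\{0\}$ after the saddles are inserted, yielding an embedded graph whose vertices are the crossings of $D$ and whose edges are the arcs of $\Gamma$ between crossings. Since each crossing is a $4$-valent vertex, the handshaking identity $\sum_{v}\deg(v)=2|E|$ reads $4\,c(D)=2|E|$, so there are $c(D)$ vertices and $2\,c(D)$ edges.

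The key geometric step is to identify the faces of this embedding, namely the components of $F\setminus\Gamma$. By the construction, the boundary circles lying in $S^2\times\{1\}$ are exactly the circles of $s_A D$ and those in $S^2\times\{-1\}$ are exactly the circles of $s_B D$, and $F$ is obtained by capping each of these $|s_A D|+|s_B D|$ circles with a disc. Each capping disc, together with the collar joining it down through the saddles to $\Gamma$, forms a single face, and these exhaust $F\setminus\Gamma$. Hence the number of faces is $|s_A D|+|s_B D|$, and the decomposition is genuinely into discs. This cellularity claim is the point that needs care: one must verify, using the local saddle picture of Figure~\ref{fig:saddle} and the $4$-valent structure at each vertex, that the two families of smoothing circles attach to $\Gamma$ so that every region of the complement is simply connected and corresponds to exactly one smoothing circle. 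I expect this to be the main obstacle, since once it is granted the remaining count is immediate.

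Combining the counts gives
\[
\chi(F)=c(D)-2\,c(D)+\bigl(|s_A D|+|s_B D|\bigr)=-c(D)+|s_A D|+|s_B D|.
\]
Setting this equal to $2-2g_{T}(D)$ and solving yields
\[
g_{T}(D)=\tfrac{1}{2}\bigl(c(D)+2-|s_A D|-|s_B D|\bigr),
\]
as claimed.
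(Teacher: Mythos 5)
Your argument is correct: the paper itself gives no proof of this lemma (it is quoted from Turaev and Cromwell's book), and your Euler-characteristic computation is exactly the standard argument found there. You correctly identify the one point needing care, namely that the components of $F\setminus\Gamma$ are open discs in bijection with the circles of $s_A D$ and $s_B D$; this holds because for $t>0$ the level sets of the surface form a product $s_A D\times(0,1]$ (and symmetrically below), so each capped component is an open disc, and the count $\chi(F)=c(D)-2c(D)+|s_A D|+|s_B D|$ together with orientability and connectedness gives the stated genus.
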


\begin{lemma} [\cite{turaev-surface-1987}] \label{lem:Turaev-surface-of-alternaitng-links}
	Let $D$ be a connected alternating link diagram. Then we have $g_{T}(D)=0$.
\end{lemma}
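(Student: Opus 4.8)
The plan is to reduce the statement to a counting identity and then establish that identity using the checkerboard coloring together with the alternating hypothesis. By the preceding lemma,
\[
g_{T}(D) = \frac{1}{2}\bigl(c(D) + 2 - |s_A D| - |s_B D|\bigr),
\]
so it suffices to prove that $|s_A D| + |s_B D| = c(D) + 2$.

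First I would compute the number of complementary regions of $D$. Let $\Gamma \subset S^2$ be the underlying connected $4$-valent graph of $D$, with $V = c(D)$ vertices. Since every vertex has degree four, the number of edges is $E = 2c(D)$, and Euler's formula $V - E + F = 2$ gives that the number of complementary regions of $\Gamma$ in $S^2$ is $F = c(D) + 2$. Thus the goal becomes to show that the total number of state circles $|s_A D| + |s_B D|$ equals $F$.

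Next I would use the alternating hypothesis through a checkerboard coloring of $S^2 \setminus \Gamma$. Let $B$ and $W$ denote the numbers of regions of the two colors, so that $B + W = F$. The key observation is that in an alternating diagram all crossings have the same type with respect to a fixed checkerboard coloring; consequently the all-$A$ splice is, at every crossing, the smoothing that fuses the two corners of one color and separates the two corners of the other, while the all-$B$ splice does the opposite. Tracing the resulting arcs region by region, each region of the first color contributes exactly one circle to $s_A D$ and each region of the second color contributes exactly one circle to $s_B D$; hence $|s_A D| + |s_B D| = W + B = F = c(D) + 2$. Substituting into the formula above yields $g_{T}(D) = 0$.

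The hard part will be justifying the bijection between state circles and checkerboard regions, since this is the one place the alternating assumption is genuinely used. I would verify it by the local model at a single crossing (as in Figure~\ref{fig:splice}): at a crossing of a given checkerboard type the $A$-smoothing joins the two corners of one color while keeping the corners of the other color apart, and the alternating condition guarantees that this behavior is uniform across all crossings relative to a single coloring. A short bookkeeping argument over the corners of each region then shows that the boundary of each monochromatic region closes up into exactly one state circle, giving the two counts $|s_A D| = W$ and $|s_B D| = B$ and completing the proof.
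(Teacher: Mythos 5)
The paper does not prove this lemma at all---it is quoted from Turaev and Cromwell's book---so there is no internal proof to compare against; your argument is the standard proof of the cited result and is correct. Euler's formula for the connected $4$-valent graph gives $c(D)+2$ regions, the alternating hypothesis is precisely the statement that every crossing has the same type relative to a fixed checkerboard coloring, hence the $A$-state (resp.\ $B$-state) circles are in bijection with the regions of one color (resp.\ the other), so $|s_A D|+|s_B D|=c(D)+2$ and the genus formula yields $g_{T}(D)=0$.
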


Lowrance showed the following.
\begin{lemma}[\cite{lowrance2007kfw}] \label{lem:Turaev-surface}
	Let $D$ and $D'$ be connected link diagrams which are related to each other by a crossing change.
	Then  we have
	\[ 
	|g_{T}(D) - g_{T}(D')| \le 1.
	\] 
\end{lemma}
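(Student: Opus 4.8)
The plan is to use the genus formula from the preceding lemma, $g_{T}(D) = \tfrac{1}{2}(c(D) + 2 - |s_A D| - |s_B D|)$, and to track how each of the three quantities $c(D)$, $|s_A D|$, $|s_B D|$ changes under a single crossing change. The starting observation is that a crossing change alters only the over/under information at one crossing $c$; it leaves the underlying $4$-valent graph $\Gamma \subset S^2$ of the diagram unchanged. Consequently $D$ and $D'$ have the same shadow, so both are connected and the formula applies to each of them, and moreover $c(D) = c(D')$.

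The key local fact I would establish is that at the crossing $c$ where the change is performed, the A-splice and the B-splice exchange their roles: the A-splice of $D$ at $c$ coincides with the B-splice of $D'$ at $c$, and vice versa. Since $D$ and $D'$ agree at every other crossing, the all-A-state $s_A D'$ is obtained from $s_A D$ by switching the smoothing at the single crossing $c$ from its A-type to its B-type. Switching one smoothing in a system of disjoint circles either merges two circles into one or splits one circle into two, so it changes the number of circles by exactly one; hence $|s_A D'| = |s_A D| \pm 1$. The identical argument applied to the all-B-state gives $|s_B D'| = |s_B D| \pm 1$.

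Combining these with $c(D) = c(D')$ in the genus formula yields
\[
g_{T}(D') - g_{T}(D) = -\tfrac{1}{2}\bigl[ (|s_A D'| - |s_A D|) + (|s_B D'| - |s_B D|) \bigr],
\]
where each bracketed difference equals $\pm 1$. Their sum therefore lies in $\{-2, 0, 2\}$, so the right-hand side lies in $\{-1, 0, 1\}$, which is precisely the desired bound $|g_{T}(D) - g_{T}(D')| \le 1$.

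I expect the only point requiring care to be the verification of the two local claims---that the A- and B-splices interchange under a crossing change, and that altering a single smoothing changes the circle count by exactly one---both of which are checked by inspecting the standard pictures of the A- and B-splices in Figure~\ref{fig:splice}. Once these are in hand the result is a direct computation, with no global argument needed.
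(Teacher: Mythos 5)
Your argument is correct. Note that the paper itself gives no proof of this lemma---it is quoted from Lowrance's paper \cite{lowrance2007kfw}---so there is nothing in the text to compare against; your derivation (same shadow, hence same crossing number and connectivity; A- and B-splices interchange at the changed crossing, so each of $|s_A D|$ and $|s_B D|$ shifts by exactly $\pm 1$; plug into the genus formula) is the standard one and fills that gap correctly. The two local facts you flag are indeed the only points needing verification, and both hold as you state them.
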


\begin{corollary}\label{lem:upper-bound}
	Let $L$ be a non-split link. Then we have
	\[
	g_{T}(L) \le \mathrm{dalt} (L).
	\]
\end{corollary}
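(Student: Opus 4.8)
The plan is to bound $g_T(L)$ by comparing an arbitrary diagram to an alternating one through the sequence of crossing changes that realizes the dealternating number, using Lemma~\ref{lem:Turaev-surface} to control how the Turaev genus changes at each step. First I would set $d = \mathrm{dalt}(L)$ and choose a $d$-almost alternating diagram $D$ of $L$. By the definition of the dealternating number, there exist $d$ crossings of $D$ such that changing all of them simultaneously produces an alternating diagram $D'$ of some (possibly different) link. Crucially, $D$ and $D'$ share the same underlying projection, so $D'$ is a connected link diagram whenever $D$ is; since $L$ is non-split I may take $D$ connected.

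Next I would realize the passage from $D$ to $D'$ as a chain of $d$ single crossing changes: let $D = D_0, D_1, \dots, D_d = D'$ where each $D_{i+1}$ is obtained from $D_i$ by changing exactly one of the designated crossings. Each intermediate $D_i$ is a connected link diagram (again the projection never changes), so Lemma~\ref{lem:Turaev-surface} applies at every step, giving
\[
|g_T(D_i) - g_T(D_{i+1})| \le 1 \qquad (i = 0, 1, \dots, d-1).
\]
Summing these $d$ inequalities via the triangle inequality yields $|g_T(D) - g_T(D')| \le d$. Since $D'$ is a connected alternating diagram, Lemma~\ref{lem:Turaev-surface-of-alternaitng-links} gives $g_T(D') = 0$, hence $g_T(D) \le d = \mathrm{dalt}(L)$.

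Finally, by the definition of the Turaev genus of a non-split link as the minimum of $g_T(D)$ over all diagrams of $L$, we have $g_T(L) \le g_T(D) \le \mathrm{dalt}(L)$, which is the desired bound. The only point requiring care is the observation that a crossing change preserves the underlying projection and therefore preserves both connectivity and the applicability of the two Turaev lemmas at each intermediate stage; once that is noted, the argument is a routine telescoping of Lemma~\ref{lem:Turaev-surface}. I do not anticipate a genuine obstacle here, as the statement is essentially a direct corollary of the stability of the Turaev genus under crossing changes combined with the vanishing of the Turaev genus for alternating diagrams.
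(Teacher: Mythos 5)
Your argument is correct and follows the same route as the paper: take a diagram realizing $\mathrm{dalt}(L)$, iterate Lemma~\ref{lem:Turaev-surface} along the chain of crossing changes to get $|g_{T}(D)-g_{T}(D')|\le \mathrm{dalt}(L)$, and conclude with Lemma~\ref{lem:Turaev-surface-of-alternaitng-links}. Your explicit remarks on connectivity and the telescoping merely spell out details the paper leaves implicit.
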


\begin{proof}
	Set $\mathrm{dalt} (L)=n$. By definition, there exists a knot diagram $D$ of $L$ such that 
	$n$ crossing changes in $D$ turn the diagram into an alternating knot diagram $D'$. 
	By Lemma \ref{lem:Turaev-surface}, we have
	\[ 
	|g_{T}(D) - g_{T}(D')| \le n.
	\] 
	By Lemma \ref{lem:Turaev-surface-of-alternaitng-links}, we obtain $g_{T}(D) \le n$.
\end{proof}

\begin{figure}[htbp]
	  \begin{center}
		\includegraphics[scale=0.6]{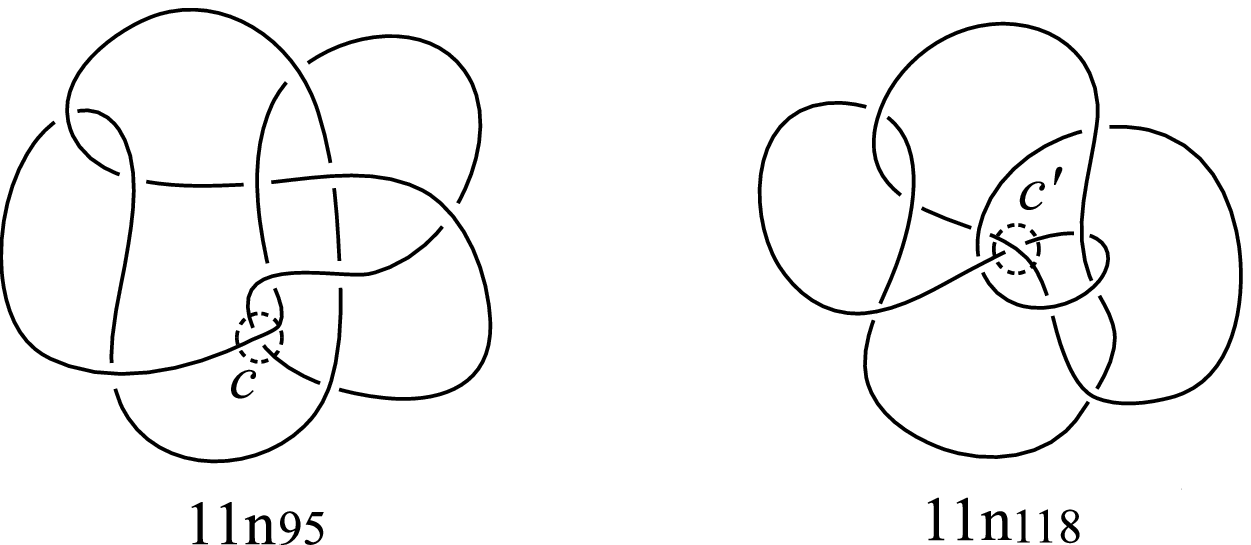}
	  \end{center}
	  \caption{}
	  \label{fig:k11_n95118}
\end{figure}

Since all non-alternating knots of eleven or fewer crossings except $11_{n95}$ and $11_{n118}$
are almost alternating, by Corollary~\ref{lem:upper-bound}, we have the following.
\begin{corollary}
	Let $K$ be a  non-alternating knots of eleven or fewer crossings except $11_{n95}$ and $11_{n118}$.
 	Then we have $g_{T}(K)=1$.  
\end{corollary}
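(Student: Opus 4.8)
The plan is to prove the inequalities $g_{T}(K)\le 1$ and $g_{T}(K)\ge 1$ separately and then combine them. The upper bound is essentially free: by the classification quoted in the introduction, every non-alternating knot of eleven or fewer crossings apart from $11_{n95}$ and $11_{n118}$ is almost alternating, so $\mathrm{dalt}(K)=1$ for each such $K$ (it is $1$-almost alternating, and being non-alternating it cannot be $0$-almost alternating). Corollary~\ref{lem:upper-bound} then gives $g_{T}(K)\le \mathrm{dalt}(K)=1$ at once.

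The substance of the argument is the lower bound $g_{T}(K)\ge 1$, equivalently $g_{T}(K)\ne 0$. Here I would invoke the converse of Lemma~\ref{lem:Turaev-surface-of-alternaitng-links}: for a connected link diagram $D$ one has $g_{T}(D)=0$ if and only if $D$ is alternating. Lemma~\ref{lem:Turaev-surface-of-alternaitng-links} supplies the implication that an alternating diagram has a genus-zero Turaev surface; the reverse implication, which is the one genuinely needed here, is Turaev's characterization of genus-zero Turaev surfaces (\cite{turaev-surface-1987}), and follows from the sharp inequality $|s_A D|+|s_B D|\le c(D)+2$ together with the fact that equality holds precisely for alternating diagrams. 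Granting this, assume toward a contradiction that $g_{T}(K)=0$. Then some connected diagram $D$ of $K$ satisfies $g_{T}(D)=0$, hence $D$ is alternating, so $K$ is an alternating knot, contradicting the hypothesis that $K$ is non-alternating. Thus $g_{T}(K)\ge 1$.

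Combining the two bounds yields $g_{T}(K)=1$. I expect the lower bound to be the main obstacle, and within it the appeal to the ``only if'' direction of the genus-zero characterization: this is the single point at which the non-alternating hypothesis on $K$ actually enters, and it is what forces the Turaev genus above $0$. Everything else is bookkeeping, resting on the already-established Corollary~\ref{lem:upper-bound} and on the known classification of small non-alternating knots.
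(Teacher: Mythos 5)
Your overall route is the same as the paper's: the upper bound $g_{T}(K)\le 1$ comes from the fact that $K$ is almost alternating together with Corollary~\ref{lem:upper-bound}, and the lower bound comes from the fact that a knot of Turaev genus zero must be alternating. (The paper states only the upper-bound half explicitly and leaves the lower bound implicit, so making it explicit is welcome.) The knot-level fact you need --- $g_{T}(K)=0$ if and only if $K$ is alternating --- is indeed a theorem from \cite{das2-graphs-on-surfaces}, and with it your argument closes.

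However, the justification you sketch for that fact contains a false step. It is not true that, for a connected diagram $D$, the equality $|s_A D|+|s_B D|=c(D)+2$ (equivalently $g_{T}(D)=0$) holds precisely when $D$ is alternating. A counterexample is the two-crossing diagram $D$ of the unknot obtained from the round circle by a single Reidemeister II move: it is non-alternating (one strand passes over the other at both crossings, so the over/under sequence along the knot is $o,o,u,u$), yet a direct computation of the two extreme states gives $|s_A D|=|s_B D|=2$, hence $|s_A D|+|s_B D|=4=c(D)+2$ and $g_{T}(D)=0$. The correct diagram-level statement is that equality holds if and only if $D$ is a diagrammatic connected sum of alternating diagrams; the knot-level statement you actually need then follows because such a diagram represents a connected sum of alternating links, which is again an alternating link. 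So your lower bound is salvageable, but as written the assertion ``equality holds precisely for alternating diagrams'' is wrong and should be replaced either by the connected-sum refinement or by a direct citation of the knot-level theorem of \cite{das2-graphs-on-surfaces}.
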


\begin{remark}
	By applying crossing changes at $c$ and $c'$ in Figure~$\ref{fig:k11_n95118}$ (see~\cite{knotinfo}),
    we obtain diagrams of alternating knots. Thus we have
	$ \mathrm{alt} (11_{n95})= \mathrm{alt} (11_{n118})=1$.
	It implies that non-alternating knots of eleven or fewer crossings have alternation number one.
\end{remark}

Khovanov~\cite{khovanov-2000-categorification} introduced an invariant of links, 
called the Khovanov homology, which values bigraded $\mathbb{Z}$-modules and
whose graded Euler characteristic is the Jones polynomial.
We denote by $\omega _{Kh}$(L) the \textit{homological width} (or \textit{thickness})
of the Khovanov homology of a link $L$. Manturov \cite{ole1} and Champanerkar, Kofman and Stoltzfus \cite{cha} gave the following (see also \cite{asa}).
\begin{lemma} [\cite{cha}, \cite{ole1}]\label{thm:Stosic}
	Let $K$ be a knot. Then we have
	\[
	\omega _{Kh}(K)-2 \le g_{T}(K).
	\]
\end{lemma}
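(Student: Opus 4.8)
The plan is to prove the stronger diagram-level statement that for every connected diagram $D$ of $K$ one has $\omega_{Kh}(K) \le g_{T}(D) + 2$, and then to minimise over all diagrams $D$ of $K$; since $g_{T}(K)$ is by definition $\min_{D} g_{T}(D)$ and $\omega_{Kh}(K)$ depends only on $K$, this immediately gives $\omega_{Kh}(K) \le g_{T}(K) + 2$, which is the asserted inequality. Thus I fix a connected diagram $D$ with $c(D)$ crossings, $n_+$ positive and $n_-$ negative, and I work with the diagonal grading $\delta = j - 2i$ on the Khovanov complex, so that $\omega_{Kh}(K)$ is the number of diagonals $\delta$ carrying nonzero homology. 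An alternating diagram occupies exactly two such diagonals (thinness of alternating knots), which is consistent with the base case $g_{T}(D)=0$ coming from Lemma~\ref{lem:Turaev-surface-of-alternaitng-links}.

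First I would compute the $\delta$-grading on Viro's enhanced Kauffman states. For a resolution with $r$ $B$-smoothings and $|s_r|$ circles labelled by $\pm$, a direct bookkeeping gives, up to the usual normalisation, $\delta = n_+ - r + (\#_+ - \#_-)$, so within a fixed resolution $\delta$ ranges over an interval of radius $|s_r|$. Passing from a resolution to one with an extra $B$-smoothing changes $r$ by $+1$ and $|s_r|$ by $\pm 1$, so $|s_r| - r$ and $|s_r| + r$ are extremised at the all-$A$ and all-$B$ states respectively. Combining this with the Turaev genus formula $g_{T}(D) = \tfrac{1}{2}(c(D) + 2 - |s_A D| - |s_B D|)$ pins down the extreme diagonals of the chain complex in terms of $|s_A D|$, $|s_B D|$ and $c(D)$.

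The main obstacle is that this chain-level spread is far wider than $2g_{T}(D) + 2$: the content of the theorem is that the \emph{homology} concentrates on far fewer diagonals than the chain groups occupy. To capture this I would pass to the Turaev surface $\Sigma$ of $D$, whose genus is $g_{T}(D)$, together with its two dual spanning ribbon graphs $\mathbb{G}_A, \mathbb{G}_B$, each cellularly embedded in $\Sigma$ and hence of genus $g_{T}(D)$, and organise the cube of resolutions by the filtration they induce. Near the all-$A$ and all-$B$ corners the complex is modelled on that of an alternating diagram, where the homology is forced onto two diagonals; the genus $g_{T}(D)$ measures precisely the obstruction to propagating this concentration across the whole cube, and a spectral-sequence comparison on the ribbon graphs (following \cite{cha}, \cite{ole1}) bounds the number of surviving diagonals by $2g_{T}(D) + 2$. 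Establishing this concentration, rather than the elementary chain-level estimate, is the hard step.

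Finally, since homology is a subquotient of the chain complex and is therefore supported within the diagonals produced at the previous step, the number of nonzero diagonals is at most $g_{T}(D) + 2$, so $\omega_{Kh}(K) \le g_{T}(D) + 2$ for the chosen diagram. Minimising over all connected diagrams of $K$ and subtracting $2$ yields $\omega_{Kh}(K) - 2 \le g_{T}(K)$, as claimed.
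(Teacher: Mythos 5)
First, note that the paper does not prove this statement at all: Lemma~\ref{thm:Stosic} is imported from \cite{cha} and \cite{ole1} and used as a black box, so there is no internal proof to compare yours against. Judged on its own terms, your write-up has a genuine gap exactly where you yourself locate the difficulty. The first two paragraphs are fine: the reduction to a diagram-level bound, the computation of the diagonal grading $\delta$ on enhanced states, and the observation that the resulting chain-level spread of $\delta$ grows linearly in $c(D)$ and is therefore useless. But the entire content of the lemma is your third paragraph, and there you do not give an argument: you assert that ``a spectral-sequence comparison on the ribbon graphs (following \cite{cha}, \cite{ole1}) bounds the number of surviving diagonals,'' which is a citation of the result you are supposed to be proving, not a proof. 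No filtration is actually specified, no spectral sequence is constructed, no $E_1$ page is identified, and no mechanism is exhibited by which the genus of the ribbon graphs controls the number of diagonals surviving to homology. (The known arguments do not take quite this form either: Champanerkar--Kofman--Stoltzfus deduce the bound from the reduced spanning-tree model of Khovanov homology, whose generators are indexed by spanning trees of the Tait graph and whose $\delta$-spread is controlled directly by $g_{T}(D)$; Manturov works with additional gradings; an alternative elementary route is induction on $c(D)$ via the unoriented skein long exact sequence, comparing the Turaev genera of $D$ and of its two resolutions at a crossing.)

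There is also a quantitative inconsistency: in the third paragraph you bound the surviving diagonals by $2g_{T}(D)+2$, and in the fourth you silently convert this to $g_{T}(D)+2$. Presumably you intend the parity observation that $\delta=j-2i$ is always odd for a knot, so that a $\delta$-interval of length $2\bigl(g_{T}(D)+1\bigr)$ contains only $g_{T}(D)+2$ admissible diagonals, but as written the two counts do not match and the conversion is never justified. Both the missing concentration argument and this bookkeeping step need to be supplied before this can be called a proof.
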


Sto$\check {s}$i$\acute {c}$~\cite{sto1} and Turner~\cite{tur1} independently calculated 
the rational Khovanov homology of the $(3, 3n+i)$-torus link, where $i=0,1,2$.
\begin{lemma}[\cite{sto1}, \cite{tur1}]\label{thm:khovanov}
	Let $T_{3,3n+i}$ be the $(3,3n+i)$-torus link, where  $i=0,1,2$.
	Then we have 
	\[
	\omega _{Kh}(T_{3,3n+i})=n+2.
	\]
\end{lemma}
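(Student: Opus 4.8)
The plan is to compute the rational Khovanov homology $Kh(T_{3,3n+i};\mathbb{Q})$ explicitly and then count the distinct diagonals $\delta=q-2h$ (where $h$ and $q$ denote the homological and quantum gradings) that carry nonzero homology; this count is the homological width, equal to $2$ for a thin link, and the claim is that it equals $n+2$. Since $T_{3,m}$ is the closure of $(\sigma_1\sigma_2)^m$, the natural strategy is an induction on $m$, equivalently on $n$, built from the unoriented skein long exact sequence in Khovanov homology.

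First I would fix one crossing in the standard $3$-braid diagram of $T_{3,m}$ and invoke the long exact sequence relating $Kh(T_{3,m})$ to the Khovanov homology of its two smoothings at that crossing. Up to Reidemeister moves, one smoothing reduces to the torus link $T_{3,m-1}$ and the other to a $(2,k)$-torus link; the former is governed by the inductive hypothesis, and the latter, being alternating, is thin and known. With the base cases $T_{3,1}$ (the unknot) and $T_{3,2}$ (the trefoil), iterating the sequence yields a recursion for $Kh(T_{3,m})$ together with explicit homological and quantum grading shifts.

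The key step is to show that inserting one more full twist $\Delta^{2}=(\sigma_1\sigma_2)^3$, which advances $n$ by one, extends the support by exactly one new diagonal at an extreme value of $\delta$ while leaving every previously occupied diagonal occupied, so that the width grows from $(n-1)+2$ to $n+2$. This amounts to tracking $\delta_{\max}$ and $\delta_{\min}$ through the grading shifts in the long exact sequence and confirming that the generators created on the new diagonal genuinely survive rather than cancel against neighboring contributions.

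The hard part will be exactly this survival question: one must control the connecting homomorphisms of the long exact sequence, equivalently the higher differentials of the associated spectral sequence, and check that they vanish on the relevant generators, since otherwise the homology would collapse onto fewer diagonals and the width would drop below $n+2$. Over $\mathbb{Q}$ this can be handled by a combination of parity and degree arguments forcing the differentials to vanish, supplemented where necessary by an explicit description of the maps through the Frobenius-algebra structure underlying Khovanov homology; this is the technical core of the computations of Sto\v{s}i\'c and Turner. Once the homology is pinned down diagonal by diagonal, counting the occupied values of $\delta$ yields $\omega_{Kh}(T_{3,3n+i})=n+2$.
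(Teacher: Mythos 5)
The paper offers no proof of this lemma: it is imported wholesale from Sto\v{s}i\'c and Turner, so the only ``paper proof'' to compare against is the citation itself. Your outline is an accurate description of the strategy those references use --- induction on the number of crossings of the standard closed $3$-braid diagram via the unoriented skein long exact sequence, with the width read off from the occupied $\delta$-diagonals --- so as a roadmap it points in the right direction. Two cautions on the set-up: you must pin down which resolution yields $T_{3,m-1}$ and which yields the $(2,k)$-torus link (the oriented resolution of a crossing of the closure of $(\sigma_1\sigma_2)^m$ destabilizes to $T_{2,2m-2}$, while it is the disoriented one that reduces to $T_{3,m-1}$ up to a Reidemeister~I kink), since the two enter the exact sequence with different homological and quantum shifts and the entire diagonal bookkeeping hangs on those shifts. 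Also, Turner's computation is carried out over $\mathbb{Z}/2$, and homological width is sensitive to the coefficient field; for the rational statement the relevant computation is Sto\v{s}i\'c's.

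The genuine gap is the step you name and then set aside. The long exact sequence gives the upper bound $\omega_{Kh}(T_{3,3n+i})\le n+2$ essentially for free (and in any case that inequality already follows from Lemma~\ref{thm:Stosic}, Corollary~\ref{lem:upper-bound} and Theorem~\ref{thm:dalt-upbound-3braid} of this paper); the entire content of the lemma is the lower bound, i.e.\ that the generators on the new extremal diagonal survive the connecting homomorphism at every stage of the induction. This is not settled by ``parity and degree arguments'': in the relevant bidegrees the exact sequence has consecutive nonzero terms on adjacent diagonals, so degree reasons alone do not force the boundary maps to vanish, and if they failed to vanish the width would collapse below $n+2$. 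Sto\v{s}i\'c handles this by proving an injectivity/stability statement for the map induced by adding a full twist, and Turner by running an explicit spectral sequence; one of these computations (or an equivalent one) must actually be carried out rather than invoked. As written, your proposal locates the difficulty correctly but does not resolve it, so it is a sketch of where the proof lives rather than a proof.
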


By Theorems~\ref{thm:dalt-upbound-3braid} and~\ref{thm:Stosic}, we obtain the following.
\begin{theorem}\label{thm:the-dealternating-number-of-torus-links}
	Let $T_{3,3n+i}$ be the $(3,3n+i)$-torus knot, where  $i=1,2$.
	Then we have 
	\[g_{T}(T_{3,3n+i})  = \mathrm{dalt}(T_{3,3n+i}) =n.\]
\end{theorem}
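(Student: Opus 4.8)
The plan is to trap both $g_{T}(T_{3,3n+i})$ and $\mathrm{dalt}(T_{3,3n+i})$ between a common lower bound and a common upper bound, each equal to $n$, so that the chain of inequalities collapses to equalities. No genuine computation is required: every step is an application of a result already in place, and the whole argument is a sandwiching.

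First I would establish the upper bound. By the Remark following Lemma~\ref{lem:3-braidclass}, the torus link $T_{3,3n+i}$ is the closure of the $3$-braid $\Delta^{2n}(\sigma_1\sigma_2)^i \in \Omega_i$, and for $i=1,2$ this closure is a knot because $\gcd(3,3n+i)=1$. Theorem~\ref{thm:dalt-upbound-3braid} applied to $\beta = \Delta^{2n}(\sigma_1\sigma_2)^i \in \Omega_i$ gives $\mathrm{dalt}(T_{3,3n+i}) \le n$. Since $T_{3,3n+i}$ is a non-split link, Corollary~\ref{lem:upper-bound} yields
\[
g_{T}(T_{3,3n+i}) \le \mathrm{dalt}(T_{3,3n+i}) \le n.
\]

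Next I would establish the matching lower bound for $g_{T}$. By Lemma~\ref{thm:khovanov} the homological width of the Khovanov homology is $\omega_{Kh}(T_{3,3n+i}) = n+2$, and by Lemma~\ref{thm:Stosic} (valid since $T_{3,3n+i}$ is a knot for $i=1,2$) we have $\omega_{Kh}(K)-2 \le g_{T}(K)$. Combining these gives $g_{T}(T_{3,3n+i}) \ge (n+2)-2 = n$. Putting the two bounds together produces $n \le g_{T}(T_{3,3n+i}) \le \mathrm{dalt}(T_{3,3n+i}) \le n$, so every term equals $n$, which is the assertion.

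I do not expect a real obstacle here, since the content of the theorem lives entirely in the cited Khovanov-width computation of Sto\v{s}i\'c and Turner and in the dealternating bound of Theorem~\ref{thm:dalt-upbound-3braid}. The only points demanding care are bookkeeping ones: that the restriction to $i=1,2$ is exactly what makes $T_{3,3n+i}$ a knot (so that the knot-only Lemma~\ref{thm:Stosic} applies), that the Khovanov width is precisely $n+2$ rather than an off-by-one variant, and that the orientation and sign conventions of the invariants are consistent across the invoked lemmas. Once these are checked, the equalities follow immediately from the sandwich.
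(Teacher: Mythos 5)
Your argument is correct and is essentially identical to the paper's own proof: the upper bound $\mathrm{dalt}(T_{3,3n+i})\le n$ comes from Theorem~\ref{thm:dalt-upbound-3braid}, the lower bound $n\le g_{T}(T_{3,3n+i})$ from Lemmas~\ref{thm:Stosic} and~\ref{thm:khovanov}, and the two are linked by Corollary~\ref{lem:upper-bound}. You merely spell out the sandwich (and the use of Corollary~\ref{lem:upper-bound}) a little more explicitly than the paper does.
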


\begin{proof}
By Theorem~\ref{thm:dalt-upbound-3braid}, we have $\mathrm{dalt}(T_{3,3n+i}) \le n$.
On the other hand, by Theorems~\ref{thm:Stosic} and~\ref{thm:khovanov}, we have $n \le g_{T}(T_{3,3n+i})$.
\end{proof}


\section{Appendix (authored with In Dae Jong)}
Appendix is authored by the first and the second authors and In Dae Jong. 
In this section, we observe that non-alternating Montesinos links and semi-alternating links
are almost alternating.
Throughout this section, we assume that tangles and tangle diagrams have four ends.
We recall some notations for tangle diagrams. 

Two tangle diagrams $T$ and $T'$ are \textit{equivalent}, denoted by $T \sim T'$,
if they are related by a finite sequence of the Reidemeister moves.
The \textit{integral tangle diagram} $[n]$ ($n$ $\in$ $\mathbb{Z}$) is $n$ horizontal half twists
(see Figure \ref{fig:integraltangles}). 
\begin{figure}[!htbp]
	\begin{center}
	\includegraphics*[scale=0.6]{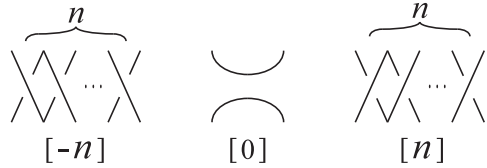}
	\end{center}
	 \caption{The integral tangle diagrams}
	 \label{fig:integraltangles}
\end{figure}

The \textit{sum} (resp.~\textit{product}) of two tangle diagrams $T$ and $S$
is the tangle diagram as in Figure~\ref{fig:sumproduct}.
We denote the sum (resp.~product) of two tangle diagrams $T$ and $S$ by $T+S$ (resp.~$T*S$).
\begin{figure}[htbp]
	\begin{center}
		\includegraphics*[scale=0.6]{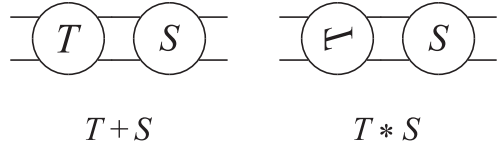}
		\end{center}
	 \caption{The sum and the product of tangle diagrams $T$ and $S$}
	 \label{fig:sumproduct}
\end{figure}	

A \textit{rational tangle diagram} is a tangle diagram obtained from integral tangle diagrams
using only the operation of product.
\begin{remark} \label{remark:rational-tangle}
    Let $a_1, a_2, a_3 ,\dots ,a_n$ be integers. 
    A rational tangle diagram $(\cdots(([a_1]*[a_2])*[a_3])*\cdots )*[a_n]$ is alternating if and only if $a_1, a_2, a_3 ,\dots ,a_n$ have same sign. 
    For any rational tangle diagram $T$, there exists an alternating rational tangle diagram $T'$
	such that $T \sim T'$,
\end{remark}

A \textit{flype} is a local move on a tangle diagram (or a link diagram) as in Figure~\ref{fig:flype}.
\begin{figure}
	\begin{center}
	\includegraphics*[scale=0.7]{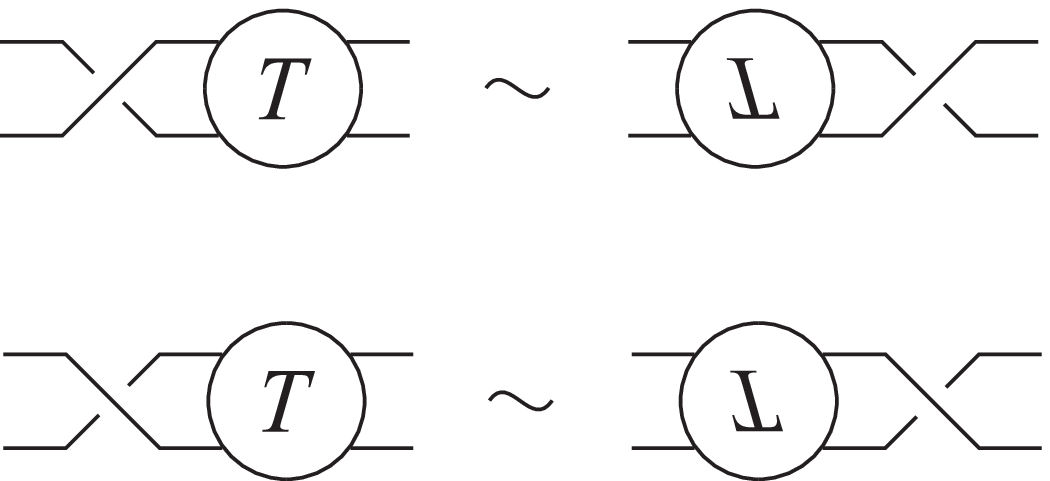}
	\end{center}
	 \caption{A flype}
	 \label{fig:flype}
\end{figure}

\begin{remark}\label{rem:tangle}
	Let $T$ be a rational tangle diagram and $s$ an integer.
	By flypes, we have $T+ [s] \sim [s]+T$ (see~\cite{kauffman-2003-rational-tangle}).
\end{remark}

We depict a symbol $o$ (resp.~$u$) near each end point of a tangle diagram as in Figure~\ref{fig:tangletype}
if an over-crossing (resp.~under-crossing) appears first when we traverse the component from the end point.
We call a tangle diagram on the left side (resp.~right side) 
in Figure~\ref{fig:tangletype} of {\it type 1} (resp.~of {\it type 2}).
We note that an alternating tangle diagram is of type 1 or of type 2.\footnote{Use a checkerboard coloring for the diagram.}
For example, the integral tangle $[1]$ is of type 1, and  $[-1]$ is of type 2. 
We show two lemmas needed later. 	
\begin{figure}[htbp]
	\begin{center}
	\includegraphics*[scale=0.7]{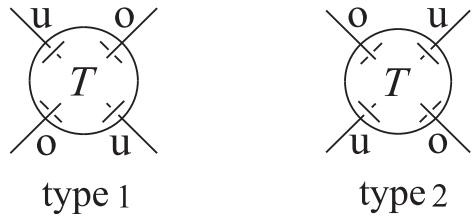}
	\end{center}
	 \caption{Tangles of type $1$ and of type $2$}
	 \label{fig:tangletype}
\end{figure}

\begin{lemma}\label{lem:keylemma1}
	Let $T$ be an alternating tangle diagram, and 
	$R$ an alternating rational tangle diagram.
	Then there exist an alternating tangle $T'$ and an integer $r$ such that $T+R \sim T'+[r]$.
\end{lemma}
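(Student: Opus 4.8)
The plan is to reduce the general statement to the structure of rational tangle diagrams by ``absorbing'' the integral tangles of $R$ one at a time into the alternating tangle on the left. Since $R$ is an alternating rational tangle diagram, by Remark~\ref{remark:rational-tangle} I may write it, up to equivalence, in the standard form $R \sim (\cdots(([a_1]*[a_2])*[a_3])*\cdots)*[a_n]$ where all the $a_i$ have the same sign; say they are all positive (the all-negative case is symmetric, working with type $2$ tangles in place of type $1$). The key structural idea is that the last factor $[a_n]$ in this product is attached by horizontal half twists, and a product by a horizontal integral tangle is, up to flype, exactly the operation of adding an integral tangle on one side. So the outermost twist region of $R$ can be peeled off and slid past the summand.

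First I would set up notation so that $R = R_0 * [a_n]$ where $R_0$ is the alternating rational tangle diagram consisting of the first $n-1$ factors. The goal is then to show $T + (R_0 * [a_n]) \sim T' + [r]$. The natural first step is to rewrite the product $R_0 * [a_n]$ geometrically: since $[a_n]$ consists of $a_n$ horizontal half twists placed at the product position, these half twists can be read as a block that, together with the crossings of $T$ meeting the same two strands, can be slid into a single horizontal twist region by a sequence of flypes (Remark~\ref{rem:tangle} is the prototype for exactly this kind of commutation of an integral tangle past a rational tangle). Concretely I would use flypes to move the $[a_n]$-block so that it becomes an integral tangle $[r]$ summed on the outside, while the remaining crossings $T$ together with $R_0$ fuse into a new tangle $T'$.

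The step that requires genuine care — and the one I expect to be the main obstacle — is verifying that the fused tangle $T'$ is still \emph{alternating}. Alternation is not automatically preserved when two alternating pieces are glued: one must check the over/under pattern matches at the gluing interface, which is precisely what the type~$1$/type~$2$ bookkeeping (Figure~\ref{fig:tangletype}) was introduced to control. So the heart of the argument is a case analysis on the types of $T$ and of $R$ at the relevant end points: because all $a_i$ share a sign, $R$ has a definite type, and I would show that when the twists of $[a_n]$ are absorbed, the strand entering $T'$ from that region continues the alternating pattern (an over-crossing meeting an under-crossing, using the footnoted checkerboard criterion) rather than breaking it. This matching is the crux; once it is established, $T'$ is alternating by construction and $[r]$ is an integral, hence alternating rational, tangle, giving the desired decomposition $T + R \sim T' + [r]$.

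Methodologically I would either induct on $n$ (the number of integral factors of $R$) or handle it in one pass via flypes. Induction is cleaner: the base case $n=1$ is $R = [a_1] = [r]$, where $T' = T$ works immediately; for the inductive step, I absorb $[a_n]$ as above to replace $(T,R)$ by $(T'',R_0)$ with $T''$ alternating, and apply the inductive hypothesis to $T'' + R_0$. The only substantive content at each step is the type-matching verification of the previous paragraph, so the induction merely packages finitely many applications of the same gluing lemma. I would therefore present the type analysis carefully once and then invoke it at each stage.
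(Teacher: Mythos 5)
Your overall conception is aligned with the paper's: normalize $R$ to the same-sign product form (Remark~\ref{remark:rational-tangle}), peel the outermost twist region of $R$ off as the integral summand, and control alternation of what remains by the type~1/type~2 bookkeeping. But the step you yourself flag as the crux --- checking that the fused tangle is still alternating --- is exactly where the proposal stops, and that check is the entire content of the lemma. The paper resolves it by a dichotomy you do not draw: if $T$ and $R$ are of the same type, then $T+R$ is already alternating and one simply takes $T'=T+R$, $r=0$, with no deformation at all; only when the types disagree is anything moved, and there the explicit deformation of Figure~\ref{fig:Figure15} extracts not the block $[-a_n]$ but $[-(a_n+1)]$ --- one additional crossing must be dragged out in order to repair the over/under mismatch at the $T$--$R$ interface. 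Your plan of sliding out exactly the $[a_n]$-block by flypes leaves that mismatch in place, so the remaining tangle need not be alternating; moreover your ``WLOG all $a_i$ positive'' erases the relationship between the type of $R$ and the type of $T$, which is precisely the datum the case analysis turns on.

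The proposed induction on $n$ also has a structural problem. To pass from $T+(R_0*[a_n])$ to $T''+R_0$ you must move $[a_n]$ across $R_0$ and absorb it into the summand on the other side; but $[a_n]$ is attached to $R_0$ by the \emph{product}, not the sum, so it does not sit at the $T$-interface, and Remark~\ref{rem:tangle} (flyping an integral \emph{summand} past a rational tangle) does not justify commuting it past $R_0$. The paper needs no induction for this lemma: a single deformation settles the mismatched case, and induction enters only afterwards, in Lemma~\ref{lem:keylemma2}, on the number of rational summands.
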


\begin{proof}
If $T$ and $R$ are both of type 1 or of type 2, then we set $T'=T+R$ and $r=0$.
We assume that the tangle diagram $T$ is of type 1, and the rational tangle diagram $R$ is of type 2.
In this case, $R$ is represented by $(\cdots([-a_1]*[-a_2])*\cdots )*[-a_n]$,
where $a_1,a_2,\dots ,a_n$ are positive integers.
If $R$ is an integral tangle diagram $[r]$, then we set $T'=T$ and $[r]=R$.
If $R$ is not an integral tangle diagram, 
we obtain the sum of an alternating tangle diagram of type 1 and the integral tangle diagram $[-(a_n+1)]$
by deforming $T+R$ as in Figure~\ref{fig:Figure15}.
If $T$ is of type 2 and $R$ is of type 1, then
the proof is reduced to the case treated above by taking the mirror image of $T+R$.
\begin{figure}[htbp]
	\begin{center}
	\includegraphics*[scale=0.7]{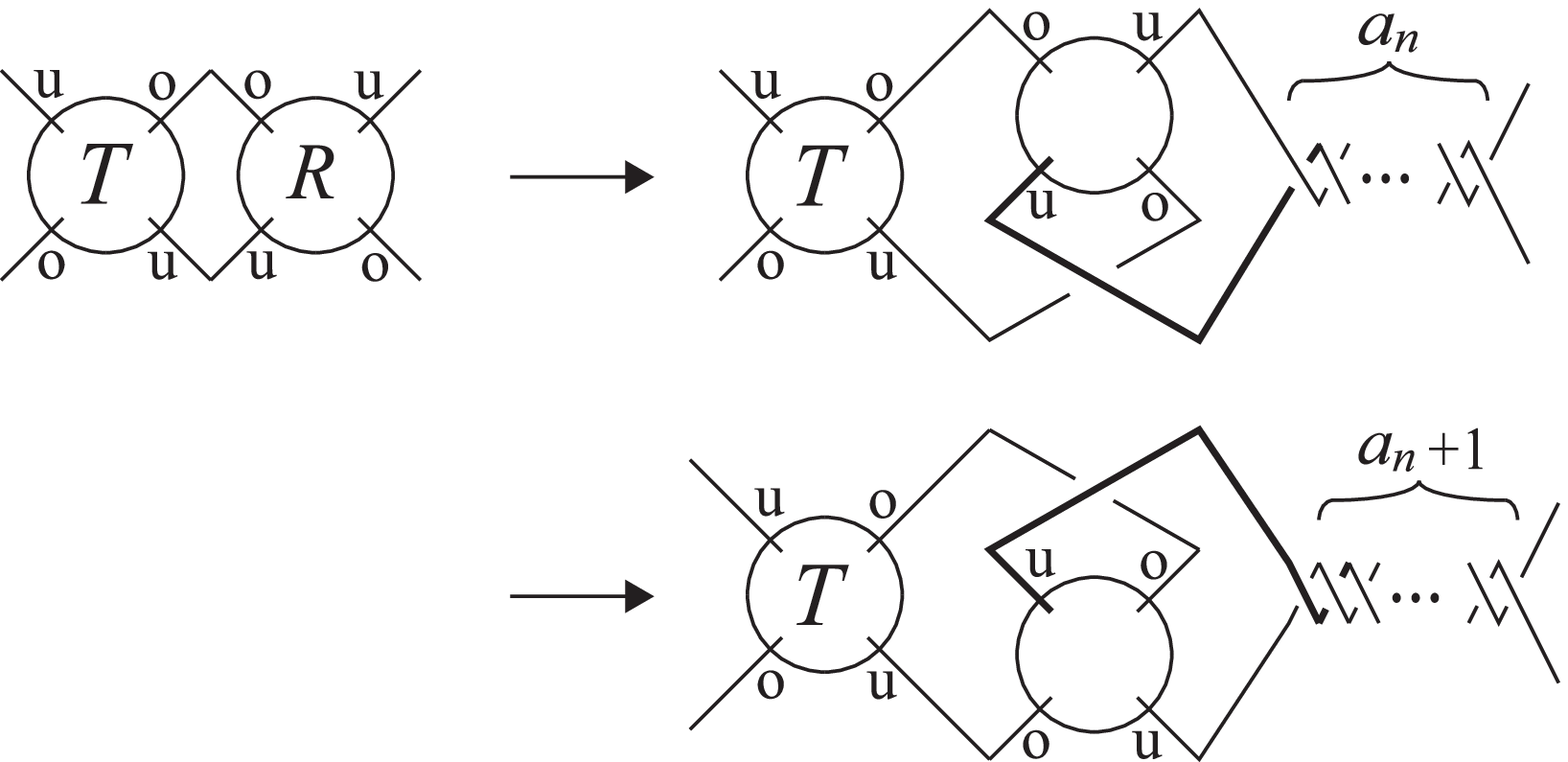}
	\end{center}
	 \caption{}
	 \label{fig:Figure15}
\end{figure}	
\end{proof}

\begin{lemma}\label{lem:keylemma2}
	Let  $R_i$ $(i=1,\dots, m)$ be alternating rational tangle diagrams. 
	Then there exist an alternating tangle diagram $T$
	and an integer $s$ such that $R_1+ \cdots + R_m \sim T+[s]$.
\end{lemma}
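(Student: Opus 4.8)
The plan is to induct on $m$, using Lemma~\ref{lem:keylemma1} as the main engine and Remark~\ref{rem:tangle} to move integral summands out of the way. For the base case $m=1$, the tangle $R_1$ is itself an alternating rational tangle diagram, so I would simply take $T=R_1$ and $s=0$; no work is needed.

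For the inductive step, suppose the statement holds for $m-1$ summands, so that there are an alternating tangle diagram $T''$ and an integer $s''$ with $R_1+\cdots+R_{m-1}\sim T''+[s'']$. Using associativity of the tangle sum, I would write $R_1+\cdots+R_m\sim (T''+[s''])+R_m\sim T''+([s'']+R_m)$. The key observation is that one cannot apply Lemma~\ref{lem:keylemma1} to this grouping directly, because $[s'']+R_m$ need not be a rational tangle diagram. Instead, since $R_m$ is rational, Remark~\ref{rem:tangle} commutes the integral tangle past it, giving $[s'']+R_m\sim R_m+[s'']$ and hence $R_1+\cdots+R_m\sim T''+R_m+[s'']$. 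Now $T''$ is alternating and $R_m$ is an alternating rational tangle, so Lemma~\ref{lem:keylemma1} yields an alternating tangle diagram $T'$ and an integer $r$ with $T''+R_m\sim T'+[r]$. Two integral tangles placed side by side add, $[r]+[s'']\sim[r+s'']$, so setting $T=T'$ and $s=r+s''$ closes the induction.

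The routine ingredients here are the associativity of the tangle sum and the additivity $[r]+[s'']\sim[r+s'']$ of integral tangles. The one genuinely substantive step is the commutation $[s'']+R_m\sim R_m+[s'']$: this is exactly where the rationality of $R_m$ (through the flypes of Remark~\ref{rem:tangle}) is essential, and it is precisely what converts the output $T''+[s'']$ of the inductive hypothesis into a form $T''+R_m+[s'']$ in which Lemma~\ref{lem:keylemma1} applies. I would expect the bookkeeping of which groupings are permitted (rational versus merely alternating, and where the stray integral tangle lives) to be the only place requiring care.
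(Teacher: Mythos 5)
Your proposal is correct and follows essentially the same route as the paper: induction on $m$, using Remark~\ref{rem:tangle} to commute the stray integral tangle $[s'']$ past $R_m$ and then Lemma~\ref{lem:keylemma1} to absorb $R_m$ into the alternating part, finally merging $[r]+[s'']$ into $[r+s'']$. The only cosmetic difference is that the paper starts the induction at $m=2$ via Lemma~\ref{lem:keylemma1} rather than at the trivial case $m=1$.
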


\begin{proof}
We prove this lemma by induction on $m$. It is trivial for the case $m=1$.
The lemma is also true by Lemma \ref{lem:keylemma1} for the case $m=2$.

Suppose that the lemma is true for the case $m=n$ such that $n \ge 2$.
We show that the lemma is true for the case $m=n+1$.
By the assumption, there exist an alternating tangle diagram $T$ and $s\in \mathbb{Z}$
such that $R_1+ \cdots+ R_{n} \sim T+[s]$.
By Remark~\ref{rem:tangle}, we have
\[
R_1+ \cdots+ R_{n+1} \sim T+R_{n+1} +[s].
\]				 
By Lemma~\ref{lem:keylemma1}, 
there exist an alternating tangle diagram $T'$ and $r\in \mathbb{Z}$
such that $T+R_{n+1} \sim T'+ [r]$.
We obtain the sum of an alternating tangle diagram $T'$ 
and an integral tangle diagram $[r+s]$. 
\end{proof}

The \textit{numerator} $N(T)$ and the \textit{denominator} $D(T)$ of a tangle diagram $T$
are closures of $T$ as in Figure~\ref{fig:Figure16}.
A \textit{Montesinos link} is a link which has a diagram represented by $N(R_1+ \cdots+ R_m)$,
where $R_1,\dots ,R_m$ are rational tangle diagrams (see Figure~\ref{fig:Montesinoslink}).
\begin{figure}[htbp]
	\begin{center}
	\includegraphics*[scale=0.7]{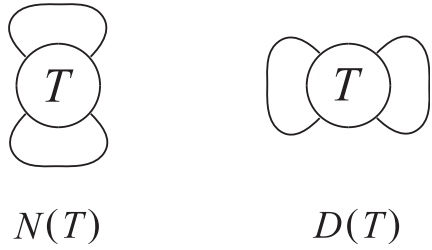}
	\end{center}
    \caption{The numerator and the denominator of a tangle diagram $T$}
    \label{fig:Figure16}
\end{figure}	

\begin{figure}[htbp]
	\begin{center}
	\includegraphics*[scale=0.7]{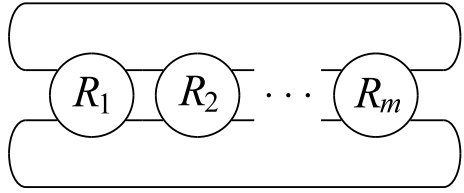}
	\end{center}
	 \caption{A diagram of a Montesinos link}
	 \label{fig:Montesinoslink}
\end{figure}

\begin{proposition} \label{prop:Montesinos}
	Montesinos links are alternating or almost alternating. 
\end{proposition}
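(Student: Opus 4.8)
The plan is to reduce an arbitrary Montesinos diagram to the numerator closure of a single alternating tangle plus one integral tangle, and then to read off the (almost) alternating structure from the type-1/type-2 data.

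First I would start from the defining diagram $N(R_1 + \cdots + R_m)$ with each $R_i$ a rational tangle diagram. By Remark~\ref{remark:rational-tangle} every rational tangle diagram is equivalent to an alternating rational tangle diagram, so after replacing each $R_i$ by such a representative I may assume that $R_1, \dots, R_m$ are all alternating. Applying Lemma~\ref{lem:keylemma2} then yields an alternating tangle diagram $T$ and an integer $s$ with $R_1 + \cdots + R_m \sim T + [s]$, so that the link in question has the diagram $N(T + [s])$. This collapses all of the potential non-alternating behaviour into the interaction between the single alternating tangle $T$ and the single integral tangle $[s]$.

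Next I would analyze $N(T+[s])$ by cases. Taking the mirror image if necessary (which preserves both alternating-ness and almost-alternating-ness), I may assume that $T$ is of type $1$. When $s$ has the sign compatible with the type of $T$ (including the case $s=0$), the crossings of $[s]$ continue the alternating pattern of $T$ across the junction, and I would check that the two numerator closure arcs join the endpoints so that $N(T+[s])$ is a genuinely alternating diagram; hence the link is alternating. When $s$ has the opposite sign, all $|s|$ crossings of $[s]$ are of the type opposite to that prescribed by $T$, so the alternating condition is violated, but only at the single crossing where $[s]$ abuts $T$ (the crossings of $[s]$ still alternate among themselves, and $T$ is alternating). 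A single crossing change at that crossing therefore produces an alternating diagram, so $\mathrm{dalt}(\,N(T+[s])\,) \le 1$ and the link is alternating or almost alternating.

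The main obstacle I expect is the bookkeeping in this last case: one must verify, using the checkerboard/over-first description of type $1$ and type $2$ tangles together with the flype identity of Remark~\ref{rem:tangle}, that the incompatibility between $T$ and $[s]$ really does localize to exactly one crossing of the closed diagram, and that changing that crossing restores alternation everywhere, in particular along the closure arcs. Keeping careful track of which endpoint of $T$ is over-first, and of how the $|s|$ twists and the two closure arcs attach to those endpoints, is the delicate part; once that is pinned down, the conclusion $\mathrm{dalt} \le 1$ is immediate.
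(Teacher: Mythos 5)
Your reduction is exactly the paper's: Remark~\ref{remark:rational-tangle} to replace each $R_i$ by an alternating rational tangle diagram, Lemma~\ref{lem:keylemma2} to reach $N(T+[s])$ with $T$ alternating, and a case split on the types of $T$ and $[s]$, with the mirror image disposing of one case. The gap is in the final case, where $T$ is of type $1$ and $[s]$ is of type $2$. You assert that the failure of alternation ``localizes to exactly one crossing'' and that a single crossing change on $N(T+[s])$ \emph{as drawn} yields an alternating diagram. That fails once $|s|\ge 2$. Label the end-germs of a type-$1$ tangle as for $[1]$; a type-$2$ tangle carries the opposite labels. Since $N(T'+T'')$ is alternating when $T'$ and $T''$ are both of type $1$, in $N(T+[s])$ with $T$ of type $1$ and $[s]$ of type $2$ there are \emph{four} bad arcs (arcs whose two ends carry the same symbol): the two junction arcs joining the right side of $T$ to the left side of $[s]$, and also the two numerator closure arcs joining the left side of $T$ to the right side of $[s]$. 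The junction arcs are incident to the leftmost crossing of the twist region $[s]$, the closure arcs to its rightmost crossing; for $|s|\ge 2$ these are distinct crossings, no single crossing of the diagram is incident to all four bad arcs, and changing the leftmost crossing of $[s]$ moreover destroys the internal alternation of the twist region. So the diagram you have in hand is at least two crossing changes away from alternating; your argument is only correct for $|s|\le 1$ or when the types agree.

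What is missing is precisely the step the paper carries out in Figure~\ref{fig:Figure18}: before changing any crossing, one isotopes $N(T+[s])$ (sliding the twist region around the numerator closure) into a different diagram of the same link in which all but one crossing of $[s]$ sit compatibly with $T$, leaving a single offending crossing; only then does one crossing change produce an alternating diagram, giving $\mathrm{dalt}\le 1$. You correctly flagged this bookkeeping as the delicate part, but the conclusion you then assert is exactly the statement that fails without that preliminary deformation, so as written the proof does not establish the proposition in the essential case.
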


\begin{proof}
By Remark~\ref{remark:rational-tangle}, every Montesinos link has 
a diagram $D$ represented by $N(R_1+ \cdots+ R_m)$,
where $R_1,\dots ,R_m$ are alternating rational tangle diagrams.
Then, by Lemma~\ref{lem:keylemma2}, the diagram $D$ can be deformed into a diagram represented by the numerator
$N(T+[s])$ of the sum of an alternating tangle diagram $T$ and an integral tangle diagram $[s]$.

If $T$ and $[s]$ are both of type 1 or of type 2, then $N(T+[s])$ is alternating.
If $T$ is of type 1 and $[s]$ is of type 2,
then we deform $N(T+[s])$  into an almost alternating diagram as in Figure~\ref{fig:Figure18}.
If $T$ is of type 2 and $[s]$ is of type 1, then
the proof is reduced to the case treated above by taking the mirror image of the diagram.

\begin{figure}[htbp]
	\begin{center}
	\includegraphics*[scale=0.7]{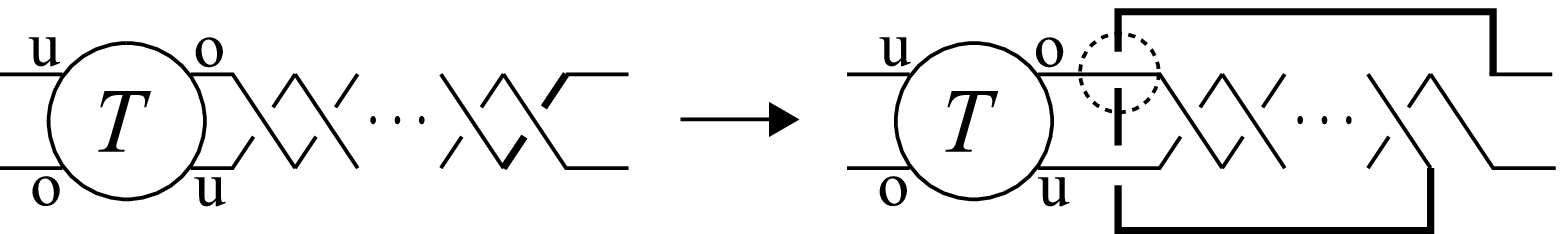}
	\end{center}
	 \caption{}
	 \label{fig:Figure18}
\end{figure}
\end{proof}

\begin{figure}[htbp]
	\begin{center}
	\includegraphics*[scale=0.7]{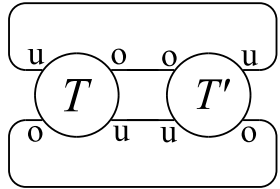}
	\end{center}
	\caption{A diagram of a semi-alternating link}
	\label{fig:semi-alternating}
\end{figure}	

A tangle diagram $T$ is \textit{strongly alternating} if both $N(T)$ and $D(T)$ are reduced alternating.
A link $L$ is \textit{semi-alternating} 
if $L$ has a non-alternating  diagram which is represented by $N(T+T')$,
where $T$ and $T'$ are strongly alternating tangle diagrams (see Figure~\ref{fig:semi-alternating}).

\begin{proposition} \label{thm:semi-alternating}
	Semi-alternating links are almost alternating. 
\end{proposition}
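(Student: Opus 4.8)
The plan is to produce, directly from the semi-alternating diagram $N(T+T')$, a $1$-almost alternating diagram, and to invoke the classical fact (Lickorish--Thistlethwaite) that semi-alternating links are non-alternating; together these give $\mathrm{dalt}(L)=1$, i.e.\ $L$ is almost alternating. First I would record that, since $T$ and $T'$ are strongly alternating, both $N(T)$ and $D(T)$ (and likewise for $T'$) are reduced alternating, so in particular $T$ and $T'$ are alternating tangle diagrams; hence each is of type $1$ or of type $2$. The first step is then a type analysis in the spirit of the opening line of the proof of Lemma~\ref{lem:keylemma1}: if $T$ and $T'$ are of the \emph{same} type, their sum $T+T'$ is an alternating tangle whose numerator closure $N(T+T')$ is alternating, contradicting the hypothesis that the semi-alternating diagram is non-alternating. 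Therefore $T$ and $T'$ are of different types, and after possibly replacing the diagram by its mirror image (which preserves strong alternateness and the numerator closure), I may assume that $T$ is of type $1$ and $T'$ is of type $2$.

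With this normalization, the only failure of alternation in $N(T+T')$ is concentrated at the seam where the type-$1$ region $T$ meets the type-$2$ region $T'$. The key observation is that near its western corners a type-$2$ strongly alternating tangle presents exactly the same local over/under picture as negative integral twists; consequently the local configuration at the junction is identical to the one occurring in the Montesinos case $N(T+[s])$ with $T$ of type $1$ and $[s]$ of type $2$. I would therefore reuse, verbatim, the local deformation of Proposition~\ref{prop:Montesinos} (Figure~\ref{fig:Figure18}): by Reidemeister moves in a neighborhood of the seam, $N(T+T')$ is carried to a diagram in which the sole obstruction to alternation is a single crossing $c$. Changing $c$ then produces an alternating diagram, which exhibits $N(T+T')$ as a $1$-almost alternating diagram of $L$.

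The main obstacle is the global verification that this one crossing change actually alternates the whole diagram. Away from the seam the diagram alternates automatically: the interiors of $T$ and $T'$ are alternating by hypothesis, and the outer closure arcs join corners of $T$ and $T'$ in the type-compatible way once the junction has been corrected. The delicate point is confirming that, for a \emph{general} strongly alternating $T'$ rather than an integral tangle $[s]$, the type-$1$/type-$2$ clash after the isotopy is confined to the single crossing $c$ and that no residual clash survives at the complementary (closure) seam. Here the strong-alternateness hypothesis on $T'$ is exactly what guarantees that its corner crossings behave like the integral-tangle corners in Figure~\ref{fig:Figure18}, so that the same local move legitimately applies. Establishing this reduction carefully is the crux; once it is in hand, the construction of the $1$-almost alternating diagram together with the classical non-alternateness of semi-alternating links completes the proof of Proposition~\ref{thm:semi-alternating}.
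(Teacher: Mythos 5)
Your overall strategy matches the paper's at the top level: quote Lickorish--Thistlethwaite for non-alternateness, then exhibit a $1$-almost alternating diagram obtained from $N(T+T')$. Your preliminary type analysis is also fine: since $T$ and $T'$ are alternating they are each of type $1$ or type $2$; if they had the same type then $N(T+T')$ would be alternating, contradicting the definition of a semi-alternating diagram, so they have opposite types and you may normalize by a mirror image. The problem lies in the step that carries all the weight --- the actual construction of the almost alternating diagram --- and there your argument has a genuine gap.

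First, the premise that the failure of alternation is ``concentrated at the seam'' is false. With $T$ of type $1$ and $T'$ of type $2$, the over/under labels clash along \emph{all four} arcs joining the two tangles: the two interior arcs of the sum \emph{and} the two closure arcs of the numerator (flipping the type of one summand reverses the compatibility of every joining arc, and the same-type configuration is the one in which all four are compatible). So there is no single local junction to repair, and the ``residual clash at the complementary seam'' that you flag as a point to be checked is in fact an obstruction to the plan as stated. Second, the move of Figure~\ref{fig:Figure18} cannot be reused verbatim: it exploits the fact that one summand is an \emph{integral} tangle $[s]$, i.e.\ a twist region that can be flyped and slid around the closure. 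That is precisely why, in the Montesinos case, the paper first proves Lemmas~\ref{lem:keylemma1} and~\ref{lem:keylemma2}, reducing $R_1+\cdots+R_m$ to the form $T+[s]$ before applying the move of Proposition~\ref{prop:Montesinos}. In the semi-alternating case no such reduction is available --- $T'$ is an arbitrary strongly alternating tangle --- and the paper accordingly uses a different deformation (Figure~\ref{fig:Figure20}) adapted to the sum of two strongly alternating tangles of opposite type. Since you explicitly leave ``establishing this reduction carefully'' as the crux, the essential content of the proposition remains unproved in your write-up; what is needed is an explicit isotopy of $N(T+T')$ after which a single crossing change resolves all four incompatible arcs simultaneously.
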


\begin{proof}
Lickorish and Thistlethwaite~\cite{lickolish-thistlethwaite-semi-alternating} showed that semi-alternating links are non-alternating.
On the other hand, we can obtain an almost alternating diagram of a semi-alternating link as in Figure~\ref{fig:Figure20}.
\end{proof}
\begin{figure}[htbp]
	\begin{center}
	\includegraphics*[scale=0.7]{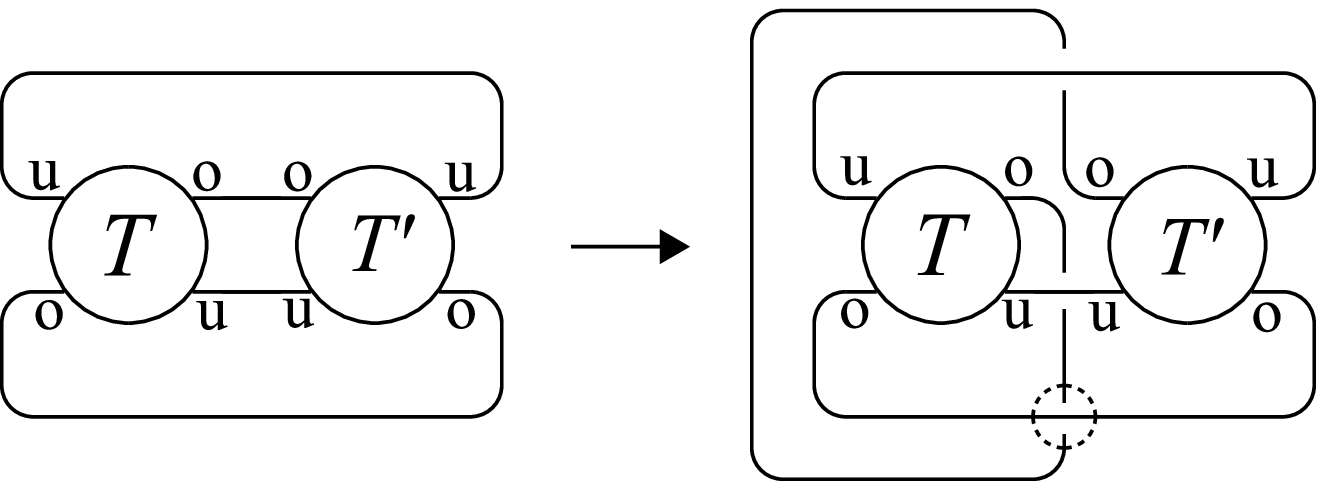}
	\end{center}
	 \caption{}
	 \label{fig:Figure20}
	\end{figure}

For a non-split link $L$,  
Champanerkar and Kofman \cite{cha2} showed 
\[  \omega_{Kh}(L)-2 \le \mathrm{dalt} (L).\] 
Propositions \ref{prop:Montesinos} and \ref{thm:semi-alternating} imply the following.
\begin{corollary}
Let $L$ be a Montesinos link or a semi-alternating link.
Then 
\[ \omega_{Kh}(L) \le 3. \]
\end{corollary}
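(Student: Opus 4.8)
The plan is to reduce this to a bound on the dealternating number and then invoke the Champanerkar--Kofman inequality $\omega_{Kh}(L)-2\le \mathrm{dalt}(L)$ recorded just above.

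First I would observe that in either case the dealternating number is at most one. If $L$ is a Montesinos link, then Proposition~\ref{prop:Montesinos} tells us that $L$ is alternating or almost alternating, so $\mathrm{dalt}(L)\in\{0,1\}$. If $L$ is a semi-alternating link, then Proposition~\ref{thm:semi-alternating} tells us that $L$ is almost alternating, so $\mathrm{dalt}(L)=1$. In both situations we have $\mathrm{dalt}(L)\le 1$, and substituting this into $\omega_{Kh}(L)-2\le \mathrm{dalt}(L)$ immediately yields $\omega_{Kh}(L)\le \mathrm{dalt}(L)+2\le 3$, as required.

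The only subtlety---and the main, albeit minor, obstacle---is that the Champanerkar--Kofman bound \cite{cha2} is stated for \emph{non-split} links, so one must confirm that the links under consideration are indeed non-split. For a semi-alternating link this follows from the strongly alternating hypothesis, since the requirement that $N(T)$ and $D(T)$ be reduced alternating forces the defining closure to be connected and nonsplittable; for Montesinos links non-splitness is part of the standard convention. Granting this, the short computation above completes the proof.
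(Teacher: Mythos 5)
Your proof is correct and is exactly the argument the paper intends: combine Propositions~\ref{prop:Montesinos} and~\ref{thm:semi-alternating} to get $\mathrm{dalt}(L)\le 1$ and then apply the Champanerkar--Kofman inequality $\omega_{Kh}(L)-2\le\mathrm{dalt}(L)$. Your added remark on the non-split hypothesis is a reasonable extra check but does not change the route.
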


\section*{Acknowledgments}
The authors would like to express their sincere gratitudes to Akio Kawauchi and Taizo Kanenobu
for helpful advices and comments. 
They also would like to thank the members of Friday Seminar on Knot Theory in Osaka City University
for uncounted discussions, especially Atsushi Ishii and Masahide Iwakiri.


\end{document}